\newcommand{\blue}{\textcolor{blue}}
\newcommand{\red}{\textcolor{red}}
\newcommand{\green}{\textcolor{green}}
\numberwithin{equation}{section}
\theoremstyle{plain}
\newtheorem{lemma}{Lemma}[section]
\newtheorem{proposition}[lemma]{Proposition}
\newtheorem{theorem}[lemma]{Theorem}
\newtheorem{corollary}[lemma]{Corollary}
\theoremstyle{definition}
\newtheorem{definition}[lemma]{Definition}
\newtheorem{remark}[lemma]{Remark}
\newtheorem{example}[lemma]{Example}
\begin{document}
\newcommand{\R}{{\mathbb R}}
\newcommand{\C}{{\mathbb C}}
\newcommand{\F}{{\mathbb F}}
\renewcommand{\O}{{\mathbb O}}
\newcommand{\Z}{{\mathbb Z}} 
\newcommand{\N}{{\mathbb N}}
\newcommand{\Q}{{\mathbb Q}}
\renewcommand{\H}{{\mathbb H}}

\newcommand{\dass}{da\ss~}

\newcommand{\Aa}{{\mathcal A}}
\newcommand{\Bb}{{\mathcal B}}
\newcommand{\Cc}{{\mathcal C}}    
\newcommand{\Dd}{{\mathcal D}}
\newcommand{\Ee}{{\mathcal E}}
\newcommand{\Ff}{{\mathcal F}}
\newcommand{\Gg}{{\mathcal G}}    
\newcommand{\Hh}{{\mathcal H}}
\newcommand{\Kk}{{\mathcal K}}
\newcommand{\Ii}{{\mathcal I}}
\newcommand{\Jj}{{\mathcal J}}
\newcommand{\Ll}{{\mathcal L}}    
\newcommand{\Mm}{{\mathcal M}}    
\newcommand{\Nn}{{\mathcal N}}
\newcommand{\Oo}{{\mathcal O}}
\newcommand{\Pp}{{\mathcal P}}
\newcommand{\Qq}{{\mathcal Q}}
\newcommand{\Rr}{{\mathcal R}}
\newcommand{\Ss}{{\mathcal S}}
\newcommand{\Tt}{{\mathcal T}}
\newcommand{\Uu}{{\mathcal U}}
\newcommand{\Vv}{{\mathcal V}}
\newcommand{\Ww}{{\mathcal W}}
\newcommand{\Xx}{{\mathcal X}}
\newcommand{\Yy}{{\mathcal Y}}
\newcommand{\Zz}{{\mathcal Z}}

\newcommand{\zt}{{\tilde z}}
\newcommand{\xt}{{\tilde x}}
\newcommand{\Ht}{\widetilde{H}}
\newcommand{\ut}{{\tilde u}}
\newcommand{\Mt}{{\widetilde M}}
\newcommand{\Llt}{{\widetilde{\mathcal L}}}
\newcommand{\yt}{{\tilde y}}
\newcommand{\vt}{{\tilde v}}
\newcommand{\Ppt}{{\widetilde{\mathcal P}}}
\newcommand{\bp }{{\bar \partial}} 

\newcommand{\Remark}{{\it Remark}}
\newcommand{\Proof}{{\it Proof}}
\newcommand{\ad}{{\rm ad}}
\newcommand{\Om}{{\Omega}}
\newcommand{\om}{{\omega}}
\newcommand{\eps}{{\varepsilon}}
\newcommand{\Di}{{\rm Diff}}
\newcommand{\vol}{{\rm vol}}
\newcommand{\Pro}[1]{\noindent {\bf Proposition #1}}
\newcommand{\Thm}[1]{\noindent {\bf Theorem #1}}
\newcommand{\Lem}[1]{\noindent {\bf Lemma #1 }}
\newcommand{\An}[1]{\noindent {\bf Anmerkung #1}}
\newcommand{\Kor}[1]{\noindent {\bf Korollar #1}}
\newcommand{\Satz}[1]{\noindent {\bf Satz #1}}

\newcommand{\Cinf}{C^{\infty}}
\newcommand{\la}{\langle}
\newcommand{\ra}{\rangle}
\newcommand{\half}{\scriptstyle\frac{1}{2}}
\newcommand{\p}{{\partial}}
\newcommand{\notsub}{\not\subset}
\newcommand{\iI}{{I}}               
\newcommand{\bI}{{\partial I}}      
\newcommand{\LRA}{\Longrightarrow}
\newcommand{\LLA}{\Longleftarrow}
\newcommand{\lra}{\longrightarrow}
\newcommand{\LLR}{\Longleftrightarrow}
\newcommand{\lla}{\longleftarrow}
\newcommand{\INTO}{\hookrightarrow}

\newcommand{\QED}{\hfill$\Box$\medskip}
\newcommand{\UuU}{\Upsilon _{\delta}(H_0) \times \Uu _{\delta} (J_0)}
\newcommand{\bm}{\boldmath}

\newcommand{\commentgreen}[1]{\green{(*)}\marginpar{\fbox{\parbox[l]{3cm}{\green{#1}}}}}
\newcommand{\commentred}[1]{\red{(*)}\marginpar{\fbox{\parbox[l]{3cm}{\red{#1}}}}}
\newcommand{\commentblue}[1]{\blue{(*)}\marginpar{\fbox{\parbox[l]{3cm}{\blue{#1}}}}}

\title[Information geometry and sufficient statistics]{\large Information geometry and sufficient statistics}
\author[N. Ay, J. Jost, H.V. L\^e and L. Schwachh\"ofer]{Nihat Ay${}^{1,5}$, J\"urgen Jost${}^{1,4,5}$, H\^ong V\^an L\^e${}^{2}$  and Lorenz Schwachh\"ofer${}^{3}$}
 \date{\today}
 \thanks {J.J. is partially supported by ERC Advanced Grant FP7-267087; H.V.L. is partially supported by RVO: 67985840}

\medskip
\address{${}^{1}$Max-Planck-Institut f\"ur Mathematik in den Naturwissenschaften, Inselstrasse 22, 04103 Leipzig, Germany}
\address{${}^{2}$Institute  of Mathematics of ASCR,
Zitna 25, 11567  Praha 1, Czech Republic} 
\address{${}^{3}$Fakult\"at f\"ur Mathematik,
Technische Universit\"at Dortmund,
Vogelpothsweg 87, 44221 Dortmund, Germany} 
\address{${}^{4}$Mathematisches Institut,
Universit\"at Leipzig,
04081 Leipzig, Germany} 
\address{${}^{5}$Santa Fe Institute, 1399 Hyde Park Road, Santa Fe, NM 87501, USA}

\begin{abstract} Information geometry
  provides a geometric approach to families of statistical models. The
  key geometric structures are the Fisher quadratic form and the
  Amari-Chentsov tensor. In
  statistics, the notion of sufficient statistic expresses the
  criterion for passing from one model to another without loss of
  information. This leads to the question how the geometric structures
  behave under such sufficient statistics. While this is well studied
  in the finite sample size case, in the infinite case, we encounter
  technical problems concerning the appropriate topologies. Here, we
  introduce notions of parametrized measure models and tensor
  fields on them that exhibit the right behavior under statistical transformations. Within this framework, we can then handle the
  topological issues and show that the Fisher metric  and  the
Amari-Chentsov tensor  on statistical models in the class of symmetric
2-tensor fields and 3-tensor fields can be uniquely (up to a constant)
characterized by their invariance under
sufficient statistics, thereby achieving a full generalization of the
original result of Chentsov to infinite sample sizes. More generally, we  decompose Markov morphisms between
statistical models in terms of statistics. In particular,   a monotonicity result for the
Fisher information naturally follows.
\end{abstract}
\maketitle
{\it MSC2010: 53C99, 62B05}

{\it Keywords: Fisher quadratic  form, Amari-Chentsov tensor, sufficient statistic, Chentsov theorem}
 
\tableofcontents



\section{Introduction}
Let us begin with a short synopsis of our work and its context. Parametrized statistics deals with families of probability
  measures on some sample space $\Omega$ parametrized by a parameter
  $x$ from some space $M$ which we shall take to be  a Banach manifold, in particular,  a finite
  dimensional manifold. This
  parameter is to be estimated, and for that purpose, one wishes to
  quantify the dependence of the model on that parameter. That
  is achieved by the Fisher metric as first suggested by
  Rao \cite{Rao1945},  followed by  Jeffreys \cite{Jeffreys1946}, Efron \cite{Efron1975} and  then systematically developed by
  Chentsov and Morozova \cite{Chentsov1965}, \cite{Chentsov1978},
\cite{MC1991}. Moreover, there exists a natural affine structure on spaces of
  probability measures as discovered by Amari \cite{Amari1980},
\cite{Amari1982} and Chentsov \cite{Chentsov1982}. We refer  the reader  to \cite{MC1991}, \cite{KV1997}  and \cite{AN2000} for
more extensive  historical remarks  and  guide  on other important contributions  in the field.  Such
  structures should be invariant under reparametrizations, and this
  leads us into the realm of differential geometry, the field of
  mathematics that systematically investigates geometric
  invariances. Statistics, however, requires more. There is the
  concept of a sufficient statistic, that is, a mapping between sample
spaces that preserves all information about the parameter
$x$. Therefore, it is natural to also require the invariance of the
geometric structures under sufficient statistics. It is relatively easy
to see that the Fisher metric and the Amari-Chentsov tensor are
invariant, but whether they are the only such invariant structures is
more subtle. This is the question we are addressing and solving in the
present paper. For finite sample spaces, this has been achieved by
Chentsov long ago \cite{Chentsov1978}, see also  Remark \ref{rem:chentsovthm}. The case of infinite sample spaces, however, is
more difficult. The space of probability measures on an infinite
sample space is infinite dimensional, and therefore, standard
constructions from finite dimensional differential geometry may fail. 
 The first successful  approach to apply  techniques of   Banach space   theory  to the space of probability measures on an infinite
sample space has been achieved by Pistone with Sempi \cite{PS1995}
and other coworkers \cite{CP2007}, \cite{GP1998}.  
  However,
  there are technical difficulties, caused for instance by the fact
  that the  topology on the considered Banach manifolds is so strong
  that the space of bounded random variables is not dense in that
  topology \cite[Lemma 2]{CP2007}. \\
Our approach is different. Our essential idea is
that while the space of all probability measures $\Mm(\Om)$ on the sample space
$\Omega$ in general will not carry the required geometric structures,
it can still induce such structures on all finite or infinite dimensional models,
that is, on statistical families with a Banach manifold of
parameters. For that purpose, however, those families need to be
embedded into the space of all measures, and including the embedding
$p$ as part of our notion of a statistical model allows us to treat 
the elements of $M$ as measures on $\Omega$. We can then pull back
tensors from $\Mm(\Om)$ to $M$ and then require the needed regularity
properties not on $\Mm(\Om)$, where we might not be able to define
them, but on $M$, where they can be naturally defined. This leads us to a
notion of a {\em statistical model\/} or {\em statistical manifold\/}
\cite{Lauritzen1987, Le2005, Le2007} as a manifold $M$ equipped with a (Fisher)
metric $g$ and an (Amari-Chentsov) 3-tensor which are induced by an
embedding $p$ into  $\Mm(\Om)$. \\ 
Our approach combines concepts from measure theory, information
theory, and statistics. It thus is situated in information geometry, a new
mathematical field
that recently emerged, where geometric ideas and methods are exploited as principal  tools to  study  mathematical statistics and related problems in  information theory, neural networks, system theory \cite{AN2000}.
Information geometry has also been identified as a natural formalism for complexity theory \cite{Ay,AyJost}. In particular, complex networks 
can be analyzed with tools from information geometry \cite{OlbrichNetworks}.   
 We note that   parameter spaces  in information geometry  are  assumed to be   smooth  manifolds.  This  assumption is  caused  by limitation of  methods of   differential geometry. With recent  extension of  differential geometric methods to  singular spaces, e.g. in \cite{LPV2010}, we hope  to  extend  the field of applications of information  geometry in the future.

The structure of our paper is as follows. In Section \ref{pmm} we introduce the notion of a $k$-integrable parametrized measure model, which  encompasses all known examples in  information geometry  considered by Chentsov, Amari and Pistone-Sempi. We compare our concept with the concept of a geometrically regular  statistical model  proposed by Amari.   
At the end of that Section, we state our Main Theorem \ref{main}. In Section \ref{section-sufficient-statistics} we introduce the notion of sufficient statistics based on the Fisher-Neyman characterization  (Definition \ref{suff}, Lemma \ref{fisherneyman}). We give  a simple proof  that  the Amari-Chentsov structure is invariant under  sufficient statistics (Theorem \ref{th:inv}).
At the end of the section we  discuss  Chentsov's results  on  the
uniqueness of the Fisher metric and the Amari-Chentsov tensor
(Proposition \ref{chentsovcampbell}, Remark \ref{rem:chentsovthm}, Corollary \ref{cor:chentsovcampbell}). At the
end of that Section, we prove our Main Theorem.
 In Section \ref{section-markov} we introduce the notion of a Markov morphism.  A novel aspect of our concept of  
Markov  morphisms between parametrized measure models is  the consideration of smooth maps between  the parameter spaces 
(Definition \ref{equi}, Example \ref{imstat}).  Thus, the geometry of   parametrized measure models is intrinsic. 
We decompose a Markov morphism as a composition of the inverse of a Markov morphism, defined by a sufficient statistic, and a statistic (Theorem \ref{decomp1}).
As a consequence we give a geometric proof of the monotonicity   theory for Markov morphisms  (Corollary \ref{co:monoton}).  Finally, in Section \ref{pistone}, we study  the  relations between $k$-integrable 
parametrized measure models and statistical models  in the Pistone-Sempi theory.

 \section{Parametrized measure models} \label{pmm}

In this section we describe the geometry of spaces of measures
  and of parametrized families of measures. In technical terms, we introduce 
the notion of a $k$-integrable parametrized measure model (Definition \ref{def:gen})
and the notion of tensor fields on them, following the locality and  continuity condition (Definition \ref{df:tensor}, Remark \ref{rem:gen1}).
We  show that 
our  notion of  generalized  statistical models  encompasses all statistical models  considered by Chentsov, Amari,  Pistone-Sempi (Remark \ref{rem:gen1}, Example \ref{ex:statmodel}), and
 we compare  our concept with that by Amari (Remark \ref{compareamari}). 
   
Let  $(\Om, \Sigma)$ be a measurable space. 
Later on, $\Om$ will also have to carry a differentiable
  structure. \\
We consider the Banach space of all signed finite measures on $\Om$
with the total variation ${\| \cdot \|}_{TV}$ as Banach norm. More
precisely, the total variation of such a measure $\mu$ 
is defined as
\[
    {\| \mu \|}_{TV} \; := \; \sup \sum_{i = 1}^n |\mu(A_i)|
\] 
where the supremum is taken over all finite partitions $\Omega = A_1 \dot\cup \dots \dot\cup A_n$ with disjoint sets $A_i \in \Sigma$.
We consider the subset $\Mm(\Om)$  
of all finite non-negative measures on $\Om$, and, with a $\sigma$-finite non-negative
measure $\mu_0$, we also consider the subspace  
\begin{eqnarray*}
   \Ss(\Om, \mu_0) & := & \{\mu = \phi \, \mu_0 \; : \; \phi \in L^1(\Om, \mu_0) \}
\end{eqnarray*}
of signed measures dominated by $\mu_0$.
This space can be identified in terms of the canonical map 
$i_{can}: \Ss(\Om, \mu_0) \to L^1(\Om, \mu_0)$, $\mu \mapsto \frac{d \mu}{ d \mu_0}$. Note that
\[
       {\| \mu \|}_{TV} \; = \; {\left\| \frac{d \mu}{d \mu_0} \right\|}_{L^1(\Omega,\mu_0)},
\]
which implies that $i_{can}$ is a Banach space isomorphism.
Therefore, we refer to the topology of $\Ss(\Om, \mu_0)$ also as the {\em
  $L^1$-topology\/}. This is  independent of the particular
  choice of the reference measure $\mu_0$, because if $\phi \in
  L^1(\Om ,\mu_0)$ and $\psi \in L^1(\Om , \phi \mu_0)$, then $\psi \phi \in
  L^1(\Om,\mu_0)$.  Throughout the paper, we consider the following hierarchy of subsets of $\Ss(\Om, \mu_0)$: 
\begin{eqnarray*}
   \Mm(\Om, \mu_0) & = & \{\mu = \phi \, \mu_0 \; : \; \phi \in L^1(\Om, \mu_0), \;\; \phi \geq 0 \} \\
   \Mm_+(\Om, \mu_0) & = & \{\mu = \phi \, \mu_0 \; : \; \phi \in
   L^1(\Om, \mu_0), \;\; \phi > 0 \} \\
 \Mm^a(\Om, \mu_0) & = & \{\mu = \phi \, \mu_0 \; : \; \phi \in
 L^1(\Om, \mu_0), \;\; \phi \geq 0,\; \mu(\Omega) = {\| \mu \|}_{TV} = a \} \\
   \Pp(\Om, \mu_0) & = & \{\mu \in  \Mm(\Om, \mu_0) \; : \; \mu(\Omega) = {\| \mu \|}_{TV} = 1 \} \\
   \Pp_+(\Om, \mu_0) & = &  \{\mu \in  \Mm_{+}(\Om, \mu_0) \; : \; \mu(\Omega) = {\| \mu \|}_{TV} = 1 \} 
\end{eqnarray*}
In particular, for $\mu =\phi \mu_0 \in \Mm_+(\Om, \mu_0)$,
  i.e., $\phi >0$, $\mu_0$ and $\mu$ have the same null sets and are
  equivalent, that is, $\mu_0 =\phi^{-1} \mu \in \Mm_+(\Om,\mu)$. 
  Thus, we have some kind of multiplicative structure on
  $\Mm_+(\Om, \mu_0)$, and one might hope to generate this via an
  exponential map from the linear structure on $L^1(\Om,\mu_0)$. The
  problem, however, is that if $f\in L^1(\Om,\mu_0)$, then we do not
  necessarily have $e^f \in L^1(\Om,\mu_0)$. When it is, then  $e^f
  \mu_0 \in \Mm_+(\Om, \mu_0)$, but when it is not, the measure
  $e^f\mu_0$ is not well defined. Thus, certain
  infinitesimal deformations are obstructed, that is, cannot be
  integrated into local ones. Of course, this does not happen when
  $\Om$ is finite, the case treated by Chentsov, and this is the
  technical reason why we need to work harder for our main
  result. (Pistone and Sempi have analyzed the underlying topological
  structure, and we shall describe their construction from our
  perspective in Section \ref{pistone}.
  The essential point for an
  intuitive understanding of this topology is that if
  $e^f \in L^1$, then for $0<t<1$, $e^{tf}\in L^p$ for $p=1/t >1$.)
  


In order to avoid this issue and in order to make contact with
the basic construction of parametric statistics, we shall consider
parametrized families of measures, that is,  
maps $M \to \Mm(\Om)$ of  smooth Banach manifolds $M$ into the ``universal measure set''  
$\Mm(\Omega)$ and attempt to pull  geometric structures from $\Mm(\Om)$ back to
$M$  by such maps, which are, in a sense,  similar   to  differentiable maps. Since, however, we may not be able to fully
  define these objects on $\Mm(\Om)$, we shall have to push forward
  tensors from $M$ instead, and integrate them w.r.t. the measures
  $p(x)$ defined by a parametrized family. 
We shall now introduce the technical conditions
needed to realize universal objects on $\Mm(\Om)$ on such parametrized
families.

\begin{definition} \label{df:tensor}
A {\em covariant $n$-tensor  field }on $\Mm(\Om)$  assigns  
to each $\mu \in \Mm(\Om)$  a multilinear map $\tau_\mu: \bigoplus^n
L^n (\Om, \mu) \to \R$ that is continuous w.r.t. the product topology on $\bigoplus^n L^n (\Om, \mu)$.
\end{definition}

In this definition, continuity refers to the continuity of the linear maps
$\tau_\mu$ for fixed
$\mu$. (This is different from 
requiring that $\tau_\mu$ be continuous as a function of $\mu$.) \\
Such objects then will be pulled back to $M$ under a map $p:M\to
\Mm(\Om)$, and they then operate on $n$ vector fields on $M$. When
these vector fields are continuous, their evaluation under the pulled
back covariant tensor field should also be continuous. 
Note that the (Banach) manifold  structure on $M$ defines a 
canonically induced structure of a (Banach) vector bundle $TM \times _{n \, times}  TM \to M$, regarding it as the $n$-fold Whitney sum of the (Banach) vector bundle $TM \to M$. In contrast to direct sums, there is no canonical definition of a topology on the tensor product $T_x^*M \otimes _{n\,  times}  T_x^* M$, whence the tensor product $T^*M \otimes _{n\,  times}  T^* M \to M$ is not a (Banach) vector bundle  if $M$ is infinite dimensional. However, we can define weak continuity of its section as follows.
\begin{definition}\label{def:new} A  continuous  $n$-vector  field  $V_n$  on a Banach manifold $M$  is  a continuous  section  of the
bundle $TM \times _{n \,  times} TM  \to M$.
A section  $\tau$  of the  bundle $T^*M \otimes _{n\,  times}  T^* M$  is  called {\it a  weakly continuous   covariant $n$-tensor},  if  
the value $\tau (V_n)$  is a continuous function for  any  continuous    $n$-vector field
$V_n$ on   $M$.
\end{definition}
For a map $p: M \to \Mm(\Om,\mu)$ the composition $\bar{p} := i_{can}\circ p: M \to L^1(\Omega, \mu)$, 
$x \mapsto \bar{p}(x) := \frac{d p(x)}{d \mu}$, will play a central role. 
Thus, $\bar{p}$ is a map from $M$ to $L^1(\Omega,\mu)$, whence we can consider $\bar{p}$ also as a map $M \times \Omega \to {\mathbb R}, (x, \omega) \mapsto \bar{p}(\omega, x)$ such that
\begin{equation} \label{whatever}
p(x) = \bar{p}(x, \omega) d\mu(\omega).
\end{equation}
Of course, for a fixed $x \in M$, the function $\omega \mapsto \bar{p}(x, \omega)$ is only defined up to changes on a $\mu$-null set in $\Omega$. We refer to a function $\bar{p}: M \times \Omega \to {\mathbb R}$ satisfying (\ref{whatever}) as a {\it density potential}. 
However,
  this notation is slightly misleading, and the infinitesimal tangent
  vector of the family rather corresponds to $\ln \bar{p}(\omega, x)$
  (recall our discussion above of the exponentiation of $f\in
  L^1(\Om,\mu)$, and taking the logarithm of course is the inverse of
  exponentiation.) In particular, the pushforward of a tangent vector
  $V\in T_x M$ is $\p_V\ln \bar p(x, \om)$, and we often simply identify
  $V$ with its pushforward when the map $p$ is fixed in a given context.\\
Our parametrized families of measures will need to satisfy some
  further important technical requirements that we shall now list and
  that will lead us to our technical concept of a 
parametrized measure model.

\begin{enumerate}
\item The parameter space $M$ is a (finite or infinite dimensional) Banach manifold of class at least $C^1$.
\item There is a  continuous mapping $p : M \to \Mm_+(\Om, \mu)$, where the latter is provided with the $L^1$-topology.
\item The composition $\bar{p} = i_{can}\circ p$
is Gateaux-differentiable as a map from the manifold $M$ to the Banach space $L^1(\Om, \mu)$.  
\item The 1-form 
    \begin{equation}\label{1-form}
      A(V)_x:= \int _\Om \p_V   \ln \bar p(x, \om)  \, dp(x), 
    \end{equation}
the  {\it  Fisher quadratic form }
\begin{equation}
   g^F(V, W) _x  : = \int _\Om \p_V   \ln \bar p(x, \om) \p_W
    \ln\bar p(x, \om) \, dp(x)\label{for:fisher}
\end{equation} 
and  the {\it  Amari-Chentsov  3-symmetric tensor}  
\begin{equation} 
   T^{AC} (V,W, X) _x : = \int _\Om \p_V  \ln\bar p(x, \om)
   \p_W  \ln\bar p (x, \om)\p_X  \ln\bar p(x, \om) \ d p(x)\label{for:amari}
\end{equation}
  are well-defined  and  continuous in the sense of Definition \ref{def:new}. 
\end{enumerate}

\begin{remark}\label{rem:hisac}  The name  of Amari and Chentsov   has been attributed to   the tensor $T^{AC}$ in \cite{Le2005} based on the fact that
the  1-parameter family of affine  connections  that are differed
by the   Levi-Civita  connection  of the  Fisher metric  by   the  tensor $T^{AC}$ up to a constant has been discovered  by Chentsov and Amari independently. These connections are also   called {\it the Amari-Chentsov  connections} \cite{Le2005}.  Earlier, in \cite{Lauritzen1987}  Lauritzen  has introduced the notion of   a statistical manifold that is a smooth manifold  equipped with a Riemannian metric  and a 3-symmetric tensor.
\end{remark}



We can now state our general definition of a parametrized measure model. 

\begin{definition} \label{def:gen} (cf. \cite[\S 2 , p. 25]{Amari1987}, \cite[\S 2.1]{AN2000}) Let $k \ge 1$.
A {\it $k$-integrable parametrized measure model} 
is a quadruple $(M, \Om, \mu, p)$ consisting  of a smooth (finite or infinite dimensional) Banach manifold 
$M$ and a continuous map $ p: M \to \Mm_+(\Om, \mu)$ provided with the $L^1$-topology such that there exists a density 
potential $\bar p = {d p \over d\mu}:  M \times \Om \to \R$ satisfying (\ref{whatever}), such that
\begin{enumerate}
\item the function 
$x \mapsto \ln \bar p (x, \om) = \ln\frac{dp(x)}{d\mu}(\om): M \to \R$ is defined and continuously G\^ateaux-differentiable for $\mu$-almost all $\omega \in \Om$, and the correspondence $V \mapsto \partial_V \ln\frac{dp(x)}{d\mu}(\om)$ depends continuously on $V \in TM$ and is linear in each $T_xM$,
\item  for all continuous vector fields $V$ on $M$ the function $
  \omega \mapsto \p _{V} \ln \bar p(x, \om)$ belongs to $L^k(\Om, p(x))$ ;
   moreover,
the function $x \mapsto ||\p _{V} \ln \bar p(x, \om)|| _{L^k(\Om, p(x))}$  is continuous on $M$.
\end{enumerate}
We call $M$ the {\it parameter space} of $(M, \Om, \mu, p)$. 
We call  $(M, \Om, \mu, p)$ {\it  a statistical  model} if 
$p(M)\subset \Pp_+ (\Om, \mu)$.
A $k$-integrable parametrized measure model $(M, \Om, \mu, p)$  is called {\it immersed} if 
$d_x \ln \bar p: T_xM \to L^k(\Om, p(x))$ is injective for all $x \in M$.
\end{definition}

Here the continuous G\^ateaux-differentiability of $\ln \bar p(x, \om)$, for a fixed $\om \in \Om$, is understood  as the  continuity  of the  Gateaux-differential  as a function on $TM$ \cite[chapter I.3]{Hamilton1982}.

\begin{remark}\label{rem:gen1} 
1. Note that, as explained above, the choice  of a reference measure in $\Mm_+(\Om, \mu)$ is immaterial for a 
$k$-integrable parametrized measure model $(M, \Om, \mu, p)$.

2. For a {\it statistical} model, (\ref{1-form}) vanishes
  identically. Recalling the identification of the tangent vector $V$
  on $M$ with its pushforward $\p_V \ln \bar{p}$, 
  this simply means
  \begin{equation}\label{pushvector}
\int_\Om Vd\mu =0. 
    \end{equation}
To obtain (\ref{pushvector})  we argue as follows. For a curve $x(t), \, t \in (-\eps,\eps),$ on $M$ with $\p_t : = \dot x(t) = V((x(t))$  the condition (2) in Definition \ref{def:gen} implies that 
$$ f(t): = \int_{\Om}\p_t  \ln  \bar p  (x(t), \om)\,d p(x(t))$$
is continuous    and hence integrable  over $(-\eps, \eps)$. In particular,  $A(V)_x$ is continuous in $x$. Apply the  Fubini theorem and the condition (1) in Definition \ref{def:gen} we have
$$\int _{-\eps} ^\eps \int_\Om (\p_t \ln \bar p (x(t), \om))p(x(t))d\mu\,dt = \int _{\Om} \int _{-\eps} ^\eps(\p_t \ln \bar p(t, \om)) p (t, \om) dt d\mu  $$
$$=\int _\Om( p(\eps, \om) - p(-\eps, \om))d\mu  = 0.$$

Observe that the above formula  for general $k$-integrable  parametrized  measure models implies
\begin{equation}
\p _V \int _\Omega\, dp(x) = \int _{\Om} \p_V \ln p(x, \om)\, dp(x)\label{tauschen}
\end{equation}
for  all $x \in M$ and  for all tangent vectors $V \in T_x M$.


3. 
For any $k$-integrable parametrized measure model $(M,\Om, \mu, p)$ the composition 
$\bar p = i_{can}\circ p: M \to L ^1 (\Om, \mu)$ is G\^ateaux-differentiable by (\ref{tauschen})    and taking into account
\[
  \int_\Om |\p_V    e ^{ \ln \bar p(x)}| d\mu = \int _{\Om} | \bar p (x) \p_V \ln \bar p (x)| d\mu  = \int _{\Om} |\p_V \ln \bar p(x)| d p(x)< \infty.
\]  

4. Any $3$-integrable parametrized measure model  carries the Fisher quadratic form
and the Amari-Chentsov tensor, which are continuous in the
  sense of Definition \ref{def:new}.  On a $k$-integrable parametrized measure model $(M, \Om, \mu, p)$ the  covariant symmetric $n$-tensor field $T^n(V, \cdots , V) := (\p_V \ln \bar p(x, \om))^n$ 
satisfies the locality and continuity conditions  required in the introduction.

5. In \cite{Chentsov1982} Chentsov considered only   statistical models $(M, \Om_n, \mu_n, p)$ where $M$ is a submanifold in 
$\Pp_{+} (\Om_n, \mu_n)$ and $p $ is the canonical embedding, see also Example \ref{ex:statmodel}.
Amari and all authors  before  Pistone and Sempi considered statistical models $(M, \Om, \mu, p)$   where $M$ is finite dimensional and $p(M) \subset \Pp_{+} (\Om, \mu)$ \cite{AN2000}. Their examples satisfy the conditions in Definition \ref{def:gen}.  

6.  In \cite[Chapter 3]{AJLS2013}  and in \cite[Definition 4.10]{Le2013}  we propose   different  refinements of the notion of a $k$-integrable  parametrized measure model, for
which  the  validity of the  condition (2)    for $k\ge 1$ implies the validity  of the condition (2) for all $1\le p \le k$. Thought the present  notion of a $k$-integrable  parametrized measure model is not as elegant as we wish, it   seems to us closest  to suggestions of   Amari and Cramer, see Remark \ref{compareamari}.
\end{remark} 

\begin{example}\label{ex:statmodel} 1. Let $\Om_n$ be a finite set of $n$  elements and $\mu_n$  a measure of maximal support on $\Om_n$.
It is evident that $\Mm_{+} (\Om_n, \mu_n)$ is diffeomorphic to  $\R^n$. Let $S$ be 
a $C^1$-submanifold  in  $\Pp_{+}(\Om_n, \mu_n)$  and $i_S : S \to \Pp_{+} (\Om_n, \mu_n)$  the canonical embedding. Then 
$(S, \Om_n, \mu_n, i_S)$ is an immersed  $k$-integrable statistical model for all $k \ge 1$. In particular, 
$(\Pp_{+}(\Om_n, \mu_n), \Om_n, \mu_n, Id)$ is a $k$-integrable statistical model. Conversely, for any immersed  $1$-integrable statistical model 
$(M, \Om_n, \mu_n, p)$   the  map $p : M \to \Pp_{+}(\Om_n, \mu_n)$  defines an immersion $M \to  \Pp_{+} (\Om_n, \mu_n)$ between 
differentiable manifolds.

2. If $s: N \to M$ is a  smooth map and $(M, \Om, \mu, p)$ is a $k$-integrable parametrized measure model, then $(N, \Om, \mu, p\circ s)$ 
is a $k$-integrable parametrized measure model.

3. For a measure space $(\Om, \mu_0)$ we define the set
\[
\Mm^{bd}_+(\Om, \mu_0) := \{ \mu = e^f \mu_0 \;: \;f \in L^\infty(\Om, \mu_0)\}.
\]
With the canonical identification $\Mm^{bd}_+(\Om, \mu_0) \ni \mu \mapsto \ln \left( \frac{d\mu}{d\mu_0}\right) \in L^\infty(\Om, \mu_0)$, we may regard $\Mm^{bd}_+(\Om, \mu_0)$ as a Banach manifold, and it is straightforward to verify that the inclusion
\[
p : \Mm^{bd}_+(\Om, \mu_0) \hookrightarrow \Mm_+(\Om, \mu_0)
\]
is $k$-integrable for all $k$.

4. Let $\Om_1, \Om_2$ be smooth manifolds with their Borel $\sigma$-algebras, and let $\kappa: \Om_1 \to \Omega_2$ be differentiable. For a (signed) finite measure $\mu$ on $\Om_1$, we define its push-forward $\kappa_*(\mu)$ as
\[
\kappa_*(\mu) (A) := \mu(\kappa^{-1}(A)), \qquad \mbox{for a Borel subset $A \subset \Omega_2$}.
\]
Moreover, let $\mu_1$ be a Lebesgue measure on $\Omega_1$, i.e., a measure locally equivalent to the Lebesgue measure on ${\mathbb R}^n$, and let $\mu_2 := \kappa_*(\mu_1)$. Then the set $\Omega_2^{sing}$ of the singular values of $\kappa$ is a null set w.r.t. $\mu_2$, and for $\om_2 \in \Omega_2^{reg} := \Omega_2 \backslash \Omega_2^{sing}$, there is a transverse measure $\mu_{\om_2}^\perp$ on $\kappa^{-1}(\om_2) \subset \Omega_1$ such that for each Borel set $A \subset \Om_2$
\[
\int_{\kappa^{-1}(A)} d\mu_1 = \int_A \left( \int_{\kappa^{-1}(\om_2)} d\mu_{\om_2}^\perp \right) d\mu_2(\om_2).
\]
Then the map
\[
\kappa_*: \Mm^{bd}_+(\Om_1, \mu_1) \longrightarrow \Mm_+(\Omega_2, \mu_2)
\]
with $\Mm^{bd}_+(\Om_1, \mu_1)$ from above is a $k$-integrable parametrized measure model for any $k$.
\end{example}

On a 3-integrable parametrized measure model $(M, \Om, \mu, p)$ the pair of the Fisher quadratic form and the Amari-Chentsov tensor will be 
called the {\it  Amari-Chentsov structure}.

\begin{remark}\label{compareamari} We would like to  compare  our concept of a $k$-integrable  parametrized measure model  with the concept of
a geometrical regular  statistical model proposed by Amari,   for instance   in \cite[\S2 ]{Amari1987}. Amari listed  6 properties  a geometrically regular statistical model $\{p (x)\in \Pp_+(\Om, \mu)\}$  must satisfy \cite[A$_1$-A$_6$, p. 25]{Amari1987}.
The condition A$_1$  says that   the domain of parameter $x$ is  homeomorphic to $\R^n$.  The conditions A$_2$  and A$_3$ are   equivalent to   our   condition (2) listed just before  Definition \ref{def:gen}.
The condition A$_4$ requires that $\bar p (x, \om)$ is smooth in $x$ uniformly in $\om$, and moreover the relation (\ref{tauschen}) holds.  The condition A$_5$ requires that  a statistical  model is 3-integrable. 
The last condition A$_6$  requires that the Fisher quadratic form is positive definite.    Amari's conditions are slightly stronger than ours, but in general  our concept agrees with his concept.  Note that similar regularity conditions   have been posed by Cramer \cite[p.500-501]{Cramer1946}, see also \cite[Chapter 2, \S 6]{Kullback}.
\end{remark}


As mentioned above, we consider tensor fields on parametrized measure models $(M, \Omega, \mu,p)$ 
that are inherited from a corresponding field on the ``universal measure set'' 
$\Mm(\Omega)$ in terms of the parametrization $p$. 

Note that we do not impose any strong regularity conditions on tensor fields on $\Mm(\Omega)$. Instead, we assume 
the required regularity and continuity conditions to be satisfied on the pull-back of the field with respect to a parametrization 
$p : M \to \Mm (\Omega)$. 
In addition to these conditions, the existence of a global tensor on $\Mm(\Omega)$ sets some 
compatibility constraints on the associated fields on the class of parametrized measure models $(M, \Omega, \mu,p)$. 
In the following definition we summarize necessary regularity and compatibility conditions for tensor fields, which are, in particular, satisfied in the
case of the Fisher quadratic form and the Amari-Chentsov tensor.
 
\begin{definition}[Locality and continuity condition] \label{def:loc} 
A {\it statistical} covariant continuous $n$-tensor field  $A$      
assigns  to  each  parametrized measure model $(M, \Om, \mu, p)$
a   {\it   continuous (in the sense of Definition \ref{def:new})}  covariant $n$-tensor field $A|_{(M, \Om, \mu, p)}$ on $M$   (cf. Definition  \ref{df:tensor}).   
A statistical covariant  continuous  $n$-tensor field  $A$  is called {\it  local}  if 
 there is  a pointwise continuous  covariant $n$-tensor field
$\tilde A$ on $\Mm(\Om)$ with the following property
\begin{equation}
A|_{(M, \Om, \mu_0, p)} (V_1(x), \cdots, V_n(x)) = \tilde A_{p(x)}(\p _{V_1} \ln \bar p(x),  \cdots,\p_{V_n}\ln \bar p(x)) .\label{for:local}
 \end{equation}
\end{definition}
 In particular, this means that the value depends only on $p(x)$,
  but not on the manifold $M$ defining the parametrized family of
  which $p(x)$ is a member.
\begin{remark}\label{re:comp1} 
1. Assume that   $A$  is a local  statistical  covariant  continuous
 $n$-tensor field. Using Example \ref{ex:statmodel}.3 
  we note that  
  there exists at most one  
point-wise continuous $n$-tensor field $\tilde  A$ on  $\Mm(\Om)$  such that $A$ is defined by $\tilde A$  as in  (\ref{for:local}).  
Thus, in order to  define $A$  it suffices to determine the associated
point-wise continuous $n$-tensor field $\tilde  A$ on  $\Mm(\Om)$ and then
verify if  the  original statistical field $A$ is  continuous.

Condition (\ref{for:local})  holds  for the Fisher quadratic  form field and the Amari-Chentsov tensor  field. The choice of $\p_V \ln \bar p (x)$ is also related to the G\^ateaux-differentiability of $p$ (Remark \ref{rem:gen1}.3). We choose $L^n (\Om, p(x)) $   as a natural condition for the value $\p_{V_n}\ln \bar p(x)$ since  it is  a natural   extension  of the condition for the existence of the  Fisher quadratic form and the Amari-Chentsov tensor on a parametrized measure model.
 
2. The  locality  and continuity condition holds obviously for  tensor fields on statistical models associated  with finite sample spaces as  in the Chentsov work \cite{Chentsov1982}.
 
3.  In \cite{Le2005} and \cite{Le2007}, L\^e proved the following   variant of the locality condition, which has been asked by Lauritzen \cite{Lauritzen1987} and Amari-Nagaoka \cite{AN2000}.  For  any  statistical manifold $(M, g, T)$ there exist
a finite sample space $\Om_n$  provided with a dominant measure $\mu_n$   and
an  immersion $p: M \to  \Mm(\Om_n, \mu_n)=  \Mm (\Om_n)$ such that the statistical structure 
$(g, T)$ is induced from the Amari-Chentsov structure  
on $(\Mm (\Om_n, \mu_n), \Om_n, \mu, Id)$ via $p$.  
\end{remark}

Our main theorem uses the notion of a sufficient statistic and the associated invariance property. As already stated in the introduction, sufficient statistics are important  transformations between parametrized measure models, since  they preserve the information of the underlying models. Although we introduce the corresponding definitions later in the paper, we present our main theorem already here so that its main structure guides the arguments and motivates further results of the paper.
 
 \begin{theorem}[Main Theorem] \label{main} (1) Assume that $A$ is a  local  statistical   continuous   1-form field. 
 If $A$ is invariant under  sufficient statistics then  there is a continuous function $c: \R \to \R$  such that  for all  finite measures 
 $\mu$ on $\Om$ and for all  $V \in L^1 (\Om, \mu)$
 we have 
\[
    \tilde{A}_\mu  ( V) = c(\int_\Om d\mu)\cdot \int _\Om V d\mu.
\]    
 In particular, recalling (\ref{pushvector}), there is  no weakly continuous 1-form field  on  statistical models that is invariant under  sufficient statistics. On  a parametrized measure model $(M, \Om, \mu, p)$   the field $A$ is expressed as follows
 \begin{equation}
 A (V)_x  = c(\int _\Om dp(x)) \cdot  \p_V ( \int _\Om  dp (x)). \label{for:main1}
 \end{equation}
 
 (2)  Assume that $F$ is a  local  statistical  continuous quadratic form field. 
 If $F$ is invariant under  sufficient statistics then  there  are  continuous  functions $f, d: \R \to \R$  such that  $F(x)   = f(\int_\Om d p(x))g^F(x) +  d(\int_\Om d p(x))A(x) ^2$, where $A$ is the  field in (1)  with $c=1$  and $g^F$ is the Fisher quadratic  form. In particular,  the Fisher quadratic form  is the unique up to a constant  weakly continuous quadratic  form field  on statistical models
 that is invariant under sufficient statistics.
 
 (3)  Assume that $T$ is a  local   statistical  continuous covariant symmetric 3-tensor field. 
 If $T$ is invariant under  sufficient statistics then there is a  continuous  function $t: \R \to \R$  such that  
 $T(x)   = t(\int_\Om p(x))T^{AC}(x) + A_1(x)^3 + A_2(x) \cdot g^F(x)$, where  $A_1, A_2$  are the fields described in (1), $g^F$  and $T^{AC}$ are the Fisher  quadratic form and the Amari-Chentsov tensor  respectively. In particular, the Amari-Chentsov tensor is the unique up to a constant  weakly  continuous 3-symmetric tensors field on statistical models  that is invariant under sufficient statistics.
\end{theorem}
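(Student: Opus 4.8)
The plan is to use the locality condition to collapse the problem to a pointwise statement on $\Mm(\Om)$, then to import Chentsov's finite-sample characterization one measure at a time by coarse-graining $\Om$ through finite partitions, and finally to glue the resulting local formulas together by continuity. By Remark \ref{re:comp1}.1, each of the fields $A$, $F$, $T$ is represented by a unique pointwise continuous tensor field on $\Mm(\Om)$, which I will again denote $\tilde A$, $\tilde F$, $\tilde T$; hence it suffices to determine $\tilde A_\mu(V_1, \dots, V_n)$ for every finite measure $\mu$ and every tuple $V_i \in L^n(\Om, \mu)$.

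First I would reduce to step functions. Fix $\mu \in \Mm(\Om)$ and a finite measurable partition $\Om = A_1 \dot\cup \cdots \dot\cup A_N$ with all $\mu(A_j) > 0$, with associated statistic $\kappa: \Om \to \Om_N := \{1, \dots, N\}$. Restricting the model $\Mm^{bd}_+(\Om, \mu)$ of Example \ref{ex:statmodel}.3 to densities that are constant on the blocks $A_j$ produces a finite-dimensional model all of whose pushforward tangent vectors $\p_V \ln \bar p$ are $\kappa$-measurable; for this submodel $\kappa$ is a sufficient statistic, since the densities depend on $\om$ only through the block containing it. Invariance of the field then yields, for any $\kappa$-measurable $V_i = v_i \circ \kappa$,
\[
\tilde A_\mu(v_1 \circ \kappa, \dots, v_n \circ \kappa) = \tilde A_{\kappa_* \mu}(v_1, \dots, v_n),
\]
where the right-hand side lives on the finite sample space $\Om_N$. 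On $\Om_N$ the Chentsov--Campbell theorem (Proposition \ref{chentsovcampbell}, Remark \ref{rem:chentsovthm}) identifies the invariant $n$-tensor as exactly the asserted combination of $g^F$, $T^{AC}$, and the degenerate pieces built from the mass-change form of part (1), with coefficients a priori depending on both $N$ and the total mass $a = \int_\Om d\mu$.

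The crucial step is to show these coefficients depend only on $a$. For this I would compare two partitions related by splitting one block $A_j$ into two pieces: the refinement map is itself a sufficient statistic between the associated finite models, and invariance forces the coefficient produced by the finer partition to coincide with that of the coarser one. Passing to the common refinement of any two partitions then shows the coefficients $c$, $f$, $d$, $t$ are partition-independent and are functions of $a$ alone; their continuity follows from the assumed continuity of the field together with continuity of $\mu \mapsto \int_\Om d\mu$. Since step functions are dense in $L^n(\Om, \mu)$ and each $\tilde A_\mu$ is continuous in the $L^n$-topology by Definition \ref{df:tensor}, the formula established on $\kappa$-measurable vectors passes to the limit and holds for all $V_i \in L^n(\Om, \mu)$, giving (\ref{for:main1}) and the analogous expressions in (2) and (3). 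On a statistical model the mass-change form vanishes by (\ref{pushvector}), so every degenerate term drops out, leaving only $g^F$ in (2) and $T^{AC}$ in (3) and yielding the claimed uniqueness up to a constant.

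I expect the main obstacle to be this refinement-consistency argument: one must check that the partition and splitting maps really are sufficient statistics in the precise sense demanded by the invariance hypothesis, and that the finitely many Chentsov--Campbell coefficients extracted over all partitions assemble into a single well-defined continuous function of the total mass. A secondary technical point is the simultaneous multilinear passage to the limit in all $n$ arguments, which requires the $L^n$-continuity of $\tilde A_\mu$ to interact correctly with approximating each argument by step functions without losing control of the remaining ones.
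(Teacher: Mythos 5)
Your proposal follows essentially the same route as the paper's proof: reduce to step functions subordinate to a finite partition using $L^n$-continuity of $\tilde A_\mu$ and density of step functions, realize such a step function as $\p_V \ln \bar p$ for a model on which the partition statistic is sufficient (the paper's Lemma \ref{lem:exi} constructs exactly the block-constant exponential family your restricted $\Mm^{bd}_+$ submodel provides), push forward to the finite sample space, and invoke the Chentsov--Campbell finite-sample classification. The only cosmetic difference is that your explicit refinement-consistency step is already subsumed in Corollary \ref{cor:chentsovcampbell}, since invariance under Markov congruent embeddings between different finite sample spaces forces a single coefficient function of the total mass.
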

 
 Campbell noticed that  the Fisher metric  on  parametrized measure models  associated with a finite sample space $\Om_n$  coincides with the Shahshahani metric \cite{Campbell1986}, which is important in mathematical biology and game theory \cite{Shahshahani1979}. It is interesting to find   applications in this direction of the Fisher metric and   other natural  metrics  on generalized  statistical models  described in the Main Theorem.
  
 \section{Sufficient statistics  and the  Amari-Chentsov structure} \label{section-sufficient-statistics}
{\it A statistic $\kappa$} is a measurable map between a measure
  space $(\Om_1, \mu_1)$ and a measurable space $\Om_2$.  One of the
  most important properties of the  Fisher  quadratic form and the
  Amari-Chentsov tensor is the invariance of  these structures under
  statistics $\kappa: \Om_1 \to \Om_2$  that are sufficient (a notion
  introduced by Fisher in 1922) for the parameter
$x\in (M, \Om_1, \mu, p)$ (Definition \ref{suff}, Theorem \ref{th:inv}). In other words, the Fisher quadratic form  and the Amari-Chentsov tensor on
$(M,\Om_1, \mu, p)$  and $(M, \Om_2, \kappa_*(\mu), \kappa_*(p))$  coincide, if $\kappa$ is sufficient. Sufficient statistics  
represent important  transformations between parametrized measure models, since  they preserve the information
of the underlying models.  Thus one  wishes to know whether there are other  quadratic forms and 3-symmetric tensors  on parametrized measure models  which  are invariant under sufficient statistics. This question has been
solved by Chentsov  in the negative for statistical models
associated with finite sample spaces \cite{Chentsov1982}, see Proposition
\ref{chentsovcampbell}. However, one naturally wishes to
  consider {\it infinite} sample spaces $\Omega$, and in this case the space
  of measures becomes {\it infinite dimensional}, and the topological
  aspects then become more subtle. More precisely, the   main difficulty  for an extension of   the Chentsov theorem to  all parametrized measure models  
is caused  by  two facts. Firstly, a statistical model associated  with a finite sample space can be  regarded locally as  
a submanifold in a universal  statistical  model $(\Pp_+ (\Om_n, \mu_n), \Om_n, \mu_n)$, 
which is a finite-dimensional open simplex (Example \ref{ex:statmodel}).  In this case, it suffices to consider  the  Fisher metric,  the Amari-Chentsov  tensor   and other tensor fields on this  open simplex.  Secondly, the structure of  sufficient statistics  associated with   the considered  statistical models   can be described in terms of  Markov congruent embeddings \cite{Chentsov1982}, see also  our discussion at the end of Section 4.   It is not easy to  generalize these facts to statistical models associated with infinite sample space, since, in particular, there is no canonical smooth structure on the set $\Mm_{+}(\Om, \mu)$ of all measures  equivalent to $\mu$, or on the set 
$\Mm(\Om, \mu)$ of all measures dominated by $\mu$.

In this section, we first give  a simple proof  that  the Amari-Chentsov structure is invariant under  sufficient statistics (Theorem \ref{th:inv}).
We  also give a geometric  proof of the  Fisher-Neyman factorization theorem which characterizes  a sufficient statistic 
$\kappa : (\Om_1, \mu_1) \to \Om_2$ under the assumption that
$\kappa: \Om_1 \to \Om_2$ is a smooth map (Theorem \ref{ssta}).
Using Theorem \ref{ssta} we present a proof of the monotonicity
theorem  (Theorem
\ref{th:monoton}).  We also consider  examples of sufficient statistics, which are associated with  Markov congruent embeddings from 
$\Mm_{+}(\Om_n, \mu_n)$ to $\Mm_{+}(\Om_m, \mu_m)$ (Example
\ref{ex:suff1}). Using them we  discuss  Chentsov's results on
geometric structures  which are invariant under sufficient statistics
between  finite sample spaces (Proposition \ref{chentsovcampbell},
Lemma \ref{lem:scale}). We shall then be in a position to prove
  our Main Theorem. 
\medskip

For a  measurable map $\kappa:(\Om_1,\mu_1) \to \Om_2$ let us denote by $\kappa _* (\mu_1)$  the push-forward measure  on $\Om_2$.

\begin{definition}\label{suff}  (cf. \cite[(2.17)]{AN2000},
  \cite[Theorem 1, p. 117]{Borovkov1998}) Assume that $(M, \Om_1, \mu_1, p_1)$ is a $k$-integrable parametrized measure model and $\Om_2$ is a  measurable space. A statistic $\kappa: (\Om_1, \mu_1) \to \Om_2$ is said to be {\it sufficient for the parameter $x\in M$} if   there exist a function  $s: M \times \Om_2 \to \R$   and a function $t  \in L^1(\Om_1, \mu_1)$ such that  for all $x \in M$ we have 
$s(x, \om_2) \in L^1 (\Om_2, \kappa_*(\mu_1))$  and  
\begin{equation}
\bar p_1(x, \om_1) =  s(x, \kappa(\om_1))t(\om_1) \hspace{1cm} \mu_1-a.e.\, .  \label{suff1}
\end{equation} 
\end{definition}


\medskip

\begin{remark}\label{his1}  Definition \ref{suff}  is a version of   the Fisher-Neyman  characterization  theorem, which states that
a statistic  is sufficient for the parameter $x \in M$ if and only if (\ref{suff1}) holds. 

\end{remark}

A  measurable map $\kappa:(\Om_1,\mu_1) \to \Om_2$ transforms  a parametrized measure model  $(M, \Om_1, \mu_1, p_1)$ into   the parametrized measure model $(M_1, \Om_2, \kappa_*(\mu_1), \kappa_*(p_1))$ whose  density potential $\kappa_*(\bar p_1)$   is defined by 
\begin{equation}
\kappa_*(\bar p_1) : = \frac{d \kappa_* (p_1 )}{d\kappa_* (\mu_1) }.\label{defe1a}
\end{equation}


\begin{lemma}\label{fisherneyman}  A statistic $\kappa : (\Om_1, \mu_1) \to \Om_2$  is  sufficient for the parameter $x\in M$ if and only if the function 
$$ r (x, \om_1) : = \frac{\bar p_1 (x, \om_1)}{ \kappa_* (\bar p_1)(x, \kappa(\om_1))}$$
does not depend on $x$ for almost all $\om_1 \in (\Om_1, \mu_1)$.
\end{lemma}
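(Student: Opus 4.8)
The plan is to recognize the denominator $\kappa_*(\bar p_1)(x,\kappa(\om_1))$ as a conditional expectation and then read the factorization \eqref{suff1} off directly. First I would unwind the definition \eqref{defe1a} using the defining property of the push-forward: for every bounded measurable $\phi$ on $\Om_2$, the change-of-variables identity applied to the measure $p_1(x)=\bar p_1(x,\cdot)\mu_1$ gives $\int_{\Om_1}\phi(\kappa(\om_1))\,\bar p_1(x,\om_1)\,d\mu_1(\om_1)=\int_{\Om_2}\phi(\om_2)\,\kappa_*(\bar p_1)(x,\om_2)\,d\kappa_*(\mu_1)(\om_2)$. This says precisely that $\kappa_*(\bar p_1)(x,\kappa(\cdot))$, viewed as a function on $\Om_1$, is the conditional expectation $E_{\mu_1}[\bar p_1(x,\cdot)\mid\kappa]$ with respect to the $\sigma$-algebra generated by $\kappa$. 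Since $p_1(x)\in\Mm_+(\Om_1,\mu_1)$, the density $\bar p_1(x,\cdot)$ is strictly positive $\mu_1$-almost everywhere, hence so is its conditional expectation; this guarantees that $r(x,\om_1)$ is well defined and positive for $\mu_1$-almost all $\om_1$, disposing of any division-by-zero concern.

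For the \emph{forward} implication I would substitute $\bar p_1(x,\om_1)=s(x,\kappa(\om_1))\,t(\om_1)$ into the identity above. Because $\om_1\mapsto s(x,\kappa(\om_1))$ is measurable with respect to the $\sigma$-algebra generated by $\kappa$, it can be pulled out of the conditional expectation, yielding $\kappa_*(\bar p_1)(x,\om_2)=s(x,\om_2)\,\tilde t(\om_2)$ where $\tilde t:=E_{\mu_1}[t\mid\kappa]$ depends on $\om_2$ alone and not on $x$. Dividing, the factors $s(x,\kappa(\om_1))$ cancel and $r(x,\om_1)=t(\om_1)/\tilde t(\kappa(\om_1))$, which manifestly does not depend on $x$.

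For the \emph{converse}, suppose $r(x,\om_1)=r(\om_1)$ is independent of $x$. I would set $s(x,\om_2):=\kappa_*(\bar p_1)(x,\om_2)$ and $t(\om_1):=r(\om_1)$; then \eqref{suff1} holds by the very definition of $r$. It remains to verify the integrability demands of Definition \ref{suff}. That $s(x,\cdot)\in L^1(\Om_2,\kappa_*(\mu_1))$ is immediate, since $\int_{\Om_2}|s(x,\cdot)|\,d\kappa_*(\mu_1)=\kappa_*(p_1(x))(\Om_2)=p_1(x)(\Om_1)<\infty$. For $t=r\in L^1(\Om_1,\mu_1)$ I would test the measure $r\,\mu_1$ against the $\kappa$-measurable weight $\phi(\om_2)=1_A(\om_2)/\kappa_*(\bar p_1)(x,\om_2)$ in the push-forward identity; the factor $\kappa_*(\bar p_1)$ cancels and one obtains $\kappa_*(r\,\mu_1)=\kappa_*(\mu_1)$, so in particular $\int_{\Om_1}r\,d\mu_1=\mu_1(\Om_1)$.

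The main obstacle is exactly this last integrability point: the computation produces $\|r\|_{L^1(\mu_1)}=\mu_1(\Om_1)$, which is finite only when the reference measure is finite. I would resolve it by invoking Remark \ref{rem:gen1}.1: since $\bar p_1(x,\cdot)>0$, for any fixed $x_0\in M$ the finite measure $p_1(x_0)$ is equivalent to $\mu_1$, so one may replace $\mu_1$ by this probability measure without changing the model or altering the ratio $r$, after which $t\in L^1$ holds automatically. The remaining care is purely measure-theoretic: justifying the push-forward/conditional-expectation identity for nonnegative (rather than merely bounded) weights by monotone convergence, and checking that the various almost-everywhere statements are mutually compatible.
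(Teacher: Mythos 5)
Your argument is essentially the paper's own: the forward direction is exactly the computation in the paper's proof (pushing forward the factorization gives $\kappa_*(\bar p_1)(x,\cdot)=s(x,\cdot)\,\kappa_*(t)$, so the $s$-factors cancel and $r=t/\kappa_*(t)\circ\kappa$), with your conditional-expectation phrasing being just another name for the paper's $\kappa_*(t)$, and the converse is the same observation the paper dismisses as obvious, namely that $\bar p_1(x,\om_1)=r(\om_1)\,\kappa_*(\bar p_1)(x,\kappa(\om_1))$ is itself a Fisher--Neyman factorization. Your additional verification of the integrability requirements of Definition \ref{suff} (and the reduction to a finite reference measure) is a sound refinement of a point the paper glosses over, but it does not change the route.
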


\begin{proof}  The  ``if"  part of Lemma \ref{fisherneyman} is obvious.  Now  we assume that (\ref{suff1}) holds, i.e. 
$\bar p_1(x,\om_1) = s(x, \kappa(\om_1))\cdot t(\om_1)$  for all $x\in M$ and almost everywhere  on $(\Om_1, \mu_1)$. 
 Then for all $x \in M$ and almost all $\om_1 \in (\Om_1, \mu_1)$  we have
\begin{equation}
\kappa_* (\bar p_1)(x,\kappa(\om_1)) = \kappa _* (t)(\kappa(\om_1)) \cdot  s(x, \kappa(\om_1)) .\label{ssta4}
\end{equation}
From   (\ref{ssta4}) we obtain  for  all $x\in M$  
\begin{equation}
r(x, \om_1)  = \frac{ t( \om_1) s(x, \kappa(\om_1))}{\kappa _* (t)(\kappa(\om_1)) \cdot  s(x, \kappa(\om_1)) } =\frac{t(\om_1)}{\kappa _* (t)\kappa(\om_1)} \hspace{1cm}  \mu_1-a.e.\: .
\end{equation}
This completes the proof  of Lemma \ref{fisherneyman}.
\end{proof}

We get immediately

\begin{corollary}\label{cor:reg} Assume that $\kappa: \Om_1 \to \Om_2$ is a sufficient
statistic  for the parameter  $x\in M$ where $(M, \Om_1, \mu_1, p_1)$ is a $k$-integrable
parametrized measure model. Then $(M, \Om_2, \kappa_*(\mu_1), \kappa_*(p_1))$ is also a $k$-integrable parametrized measure model.
\end{corollary}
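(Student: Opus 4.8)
The plan is to transport each defining property of a $k$-integrable parametrized measure model from $(M, \Om_1, \mu_1, p_1)$ to the pushed-forward datum $(M, \Om_2, \kappa_*(\mu_1), \kappa_*(p_1))$, using sufficiency to identify the two score functions and the change-of-variables formula for pushforward measures to transfer the integrability estimates verbatim. First I would record the elementary measure-theoretic preliminaries. Since $p_1(x) \in \Mm_+(\Om_1, \mu_1)$, the measures $p_1(x)$ and $\mu_1$ are equivalent, so the $\kappa$-preimage of any $\kappa_*(\mu_1)$-null set is $\mu_1$-null and hence $p_1(x)$-null; this shows $\kappa_*(p_1)(x)$ and $\kappa_*(\mu_1)$ are equivalent, so that $\kappa_*(p_1)(x) \in \Mm_+(\Om_2, \kappa_*(\mu_1))$ and the quantity $\kappa_*(\bar p_1) = \frac{d\kappa_*(p_1)}{d\kappa_*(\mu_1)}$ of (\ref{defe1a}) is a genuine density potential. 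Continuity of $x \mapsto \kappa_*(p_1)(x)$ in the $L^1$-topology follows from the contraction property $\|\kappa_*\nu\|_{TV} \le \|\nu\|_{TV}$ together with the assumed continuity of $p_1$.

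The heart of the argument is the identity of score functions. By the factorization (\ref{suff1}) characterizing sufficiency we have $\bar p_1(x, \om_1) = s(x, \kappa(\om_1))\, t(\om_1)$, and pushing forward as in (\ref{ssta4}) gives $\kappa_*(\bar p_1)(x, \om_2) = \kappa_*(t)(\om_2)\, s(x, \om_2)$. Taking logarithms, the $x$-dependence of both $\ln \bar p_1(x, \om_1)$ and $\ln \kappa_*(\bar p_1)(x, \om_2)$ is carried entirely by $\ln s(x, \cdot)$, whence
\[
\p_V \ln \kappa_*(\bar p_1)(x, \kappa(\om_1)) = \p_V \ln s(x, \kappa(\om_1)) = \p_V \ln \bar p_1(x, \om_1).
\]
To upgrade Condition (1) of Definition \ref{def:gen} from $\om_1$ to $\om_2$, I would argue by preimages: the set of $\om_2$ at which $x \mapsto \ln \kappa_*(\bar p_1)(x, \om_2)$ fails to be continuously G\^ateaux-differentiable has $\kappa$-preimage contained in the corresponding exceptional set for $\bar p_1$, which is $\mu_1$-null, and is therefore $\kappa_*(\mu_1)$-null. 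The continuous dependence on $V$ and the linearity in each $T_xM$ pass over through the same identity.

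For Condition (2) I would invoke the change-of-variables formula $\int_{\Om_2} F \, d\kappa_*(p_1)(x) = \int_{\Om_1} (F \circ \kappa)\, dp_1(x)$. Applied to $F = |\p_V \ln \kappa_*(\bar p_1)(x, \cdot)|^k$, the score identity above yields the exact equality of norms
\[
\| \p_V \ln \kappa_*(\bar p_1)(x, \cdot) \|_{L^k(\Om_2, \kappa_*(p_1)(x))} = \| \p_V \ln \bar p_1(x, \cdot) \|_{L^k(\Om_1, p_1(x))},
\]
so that both the finiteness of the $L^k$-norm and its continuity in $x$ follow immediately from Condition (2) for the original model, which finishes the verification.

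The main obstacle I anticipate is not any single estimate — the change-of-variables identity makes the norm transfer an equality rather than a mere inequality — but the careful handling of the almost-everywhere statements, namely the passage from ``$\mu_1$-almost all $\om_1$'' to ``$\kappa_*(\mu_1)$-almost all $\om_2$''. This transfer requires that the relevant exceptional sets genuinely factor through $\kappa$, which is exactly what sufficiency guarantees via the $x$-independence of $r$ in Lemma \ref{fisherneyman}, together with the measurability of these sets. Getting this right, and confirming that $\kappa_*(\bar p_1)$ is a legitimate $x$-differentiable density potential and not merely a Radon--Nikodym derivative, is where the proof must be written with some care.
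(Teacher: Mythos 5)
Your proposal is correct and follows the same route the paper intends: the paper derives this corollary ``immediately'' from Lemma \ref{fisherneyman}, i.e.\ from the $x$-independence of $r(x,\om_1)=\bar p_1(x,\om_1)/\kappa_*(\bar p_1)(x,\kappa(\om_1))$, which is exactly your identification of the two score functions, combined with the change-of-variables equality of $L^k$-norms. You have simply written out the details (equivalence of the pushed-forward measures, the a.e.\ bookkeeping, and the transfer of conditions (1) and (2) of Definition \ref{def:gen}) that the paper leaves implicit.
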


Let $\kappa : (\Om_1, \mu_1) \to (\Om_2, \mu_2)$ be a statistic  and $(M, \Om_1, \mu_1, p_1)$  a $k$-integrable parametrized measure 
model.  
 The  Fisher quadratic  form $\tilde g ^F$ on 
the transformed parametrized measure model
$(M, \Om_2, \kappa_*(\mu_1), \kappa_*(p_1))$ is defined by
\begin{equation}
\tilde g ^F (V, V)_x = \int_{\kappa(\Om_1)}(\p_V \ln (\kappa_*(\bar p_1)(x,\om_2))) ^2d\kappa_*(p_1(x))  .\label{defis}
\end{equation}

\begin{theorem}\label{th:inv}  If  a statistic $\kappa$ is sufficient for the parameter $x\in M$, then the   
Amari-Chentsov structure  transformed by $\kappa$  is  equal  to the original structure.
\end{theorem}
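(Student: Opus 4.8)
The plan is to reduce the statement to the single pointwise identity
\[
\p_V \ln \bar p_1(x, \om_1) \;=\; \bigl(\p_V \ln \kappa_*(\bar p_1)(x, \cdot)\bigr)\circ \kappa \,(\om_1) \qquad \mu_1\text{-a.e.},
\]
valid for every tangent vector $V \in T_xM$, and then to integrate products of such expressions against $p_1(x)$ using the defining property of the push-forward measure. Once this identity is available, both the Fisher quadratic form and the Amari-Chentsov tensor transform correctly by a single change-of-variables argument, so I would treat the $2$-tensor and the $3$-tensor cases simultaneously rather than separately.

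To establish the identity, first I would invoke Lemma \ref{fisherneyman}: since $\kappa$ is sufficient, the ratio $r(x, \om_1) = \bar p_1(x, \om_1)/\kappa_*(\bar p_1)(x, \kappa(\om_1))$ is independent of $x$, so I may write $r(\om_1) := r(x, \om_1)$ and obtain
\[
\ln \bar p_1(x, \om_1) \;=\; \ln r(\om_1) \;+\; \ln \kappa_*(\bar p_1)(x, \kappa(\om_1)) \qquad \mu_1\text{-a.e.}
\]
Applying the G\^ateaux derivative $\p_V$ in the parameter $x$, which is licit by condition (1) of Definition \ref{def:gen}, annihilates the $x$-independent term $\ln r(\om_1)$ and yields exactly the displayed identity, because differentiation in $x$ commutes with the fixed reparametrization $\om_1 \mapsto \kappa(\om_1)$ of the sample space.

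Next I would feed this into the definitions (\ref{for:fisher}) and (\ref{for:amari}). For the Amari-Chentsov tensor the integrand becomes the pullback under $\kappa$ of the function $h(\om_2) := \p_V \ln \kappa_*(\bar p_1)(x,\om_2)\,\p_W \ln \kappa_*(\bar p_1)(x,\om_2)\,\p_X \ln \kappa_*(\bar p_1)(x,\om_2)$ on $\Om_2$, so that
\[
T^{AC}(V,W,X)_x = \int_{\Om_1} h(\kappa(\om_1))\, dp_1(x) = \int_{\Om_2} h(\om_2)\, d\kappa_*(p_1(x)) = \tilde T^{AC}(V,W,X)_x,
\]
where the middle equality is the change-of-variables formula for the push-forward measure $\kappa_*(p_1(x))$, and the integrability needed to apply it is guaranteed by $3$-integrability. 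The same computation with a product of two derivatives gives $g^F = \tilde g^F$ as in (\ref{defis}).

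The routine bookkeeping is the verification that $\kappa_*(\bar p_1)(x,\cdot)$ really is the density potential $d\kappa_*(p_1(x))/d\kappa_*(\mu_1)$, which follows from (\ref{ssta4}) together with the defining relation (\ref{defe1a}). The main obstacle, and the only genuinely analytic point, is justifying that the G\^ateaux differentiation may be carried through the logarithm and through the composition with $\kappa$ for $\mu_1$-almost every $\om_1$ simultaneously; I would control this using the continuous G\^ateaux-differentiability and the $L^k$-integrability built into Definition \ref{def:gen}, which ensure both that the pointwise derivatives exist almost everywhere and that the resulting integrands lie in the spaces where the push-forward identity applies.
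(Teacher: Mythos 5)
Your proposal is correct and follows essentially the same route as the paper: invoke Lemma \ref{fisherneyman} to factor out the $x$-independent ratio $r(\om_1)$, differentiate the logarithm to obtain $\p_V \ln \bar p_1(x,\om_1) = \p_V \ln \kappa_*(\bar p_1)(x,\kappa(\om_1))$ $\mu_1$-a.e., and then integrate against $p_1(x)$ using the push-forward identity. The only difference is presentational: you make the change-of-variables step and the density-potential bookkeeping explicit, whereas the paper leaves them implicit in a one-line computation.
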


\begin{proof} Assume that a statistic  $\kappa$ is  sufficient  for the parameter $x \in M$.
By Lemma \ref{fisherneyman} we have  for all $x\in M$
\begin{equation}
 p_1 (x, \om_1)   = r(\om_1) \kappa_* (p_1(x)) (\kappa (\om_1))\hspace{1cm} \mu_1-a.e.\: .\label{fisherneyman2}
\end{equation}
Hence  for all $x\in M$ and all $V \in T_x M$
\begin{equation}
\p _V\ln p_1(x, \om_1) =  \p_V \ln \kappa_* (p_1(x))(\kappa (\om_1))  \hspace{1cm} \mu_1-a.e.\: .
\end{equation}
It follows  for all $x\in M$ and all $V \in T_x M$
\begin{eqnarray*}
   g^F (V, V) _x & = & \int _{\Om _1} (\p_V \ln \kappa_* (p_1(x))(\kappa (\om_2)))^2 r(\om_1) \kappa_* (p_1(x)) (\kappa (\om_1))d\mu_1 \\
                         & = & \tilde g ^F (V, V) _x .
\end{eqnarray*}
This proves the invariance of the Fisher   metric under  sufficient statistics.  The invariance of  the Amari-Chentsov tensor under  sufficient statistics  is proved in the same way.
\end{proof}

\begin{corollary}\label{diffin}  Assume that $\Om$ is a differentiable  manifold provided with the Borel $\sigma$-algebra. The Amari-Chentsov  structure on any $k$-integrable parametrized measure model $(M, \Om, \mu, p)$ is invariant under the  action of the diffeomorphism group of $\Om$.
\end{corollary}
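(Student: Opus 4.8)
The plan is to deduce this corollary from Theorem \ref{th:inv} by showing that every diffeomorphism of $\Om$, regarded as a statistic, is automatically sufficient for the parameter $x \in M$. A diffeomorphism $\phi : \Om \to \Om$ is in particular a Borel-measurable bijection whose inverse is again Borel measurable (being a homeomorphism), so it qualifies as a statistic $\kappa := \phi : (\Om, \mu) \to \Om$, and it transforms $(M, \Om, \mu, p)$ into the pushed-forward model $(M, \Om, \phi_*\mu, \phi_*p)$ in the sense of (\ref{defe1a}). Since the natural action of $\mathrm{Diff}(\Om)$ on parametrized measure models is exactly this pushforward, the assertion is that the Fisher quadratic form and the Amari-Chentsov tensor of the two models coincide, which is precisely the conclusion of Theorem \ref{th:inv} once sufficiency is in hand.

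First I would identify the density potential of the transformed model. Because $\phi$ is a bijection, the defining change-of-variables property of pushforward measures, $\int_\Om (h\circ\phi)\,d\mu = \int_\Om h\,d\phi_*\mu$, gives for every Borel set $A$
\begin{equation*}
\phi_*(p(x))(A) = \int_{\phi^{-1}(A)} \bar p(x,\om)\,d\mu(\om) = \int_A \bar p\bigl(x,\phi^{-1}(\om_2)\bigr)\,d\phi_*\mu(\om_2),
\end{equation*}
so that $\kappa_*(\bar p)(x,\om_2) = \bar p(x,\phi^{-1}(\om_2))$. Note that no Jacobian factor enters, since the reference measure is pushed forward together with $p(x)$; moreover the domination $\phi_*(p(x)) \ll \phi_*\mu$ follows automatically from $p(x)\ll\mu$, so the transformed potential is well defined.

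Next I would apply Lemma \ref{fisherneyman}. Substituting the formula above, the relevant ratio becomes
\begin{equation*}
r(x,\om) = \frac{\bar p(x,\om)}{\kappa_*(\bar p)(x,\phi(\om))} = \frac{\bar p(x,\om)}{\bar p\bigl(x,\phi^{-1}(\phi(\om))\bigr)} = 1 \qquad \mu\text{-a.e.},
\end{equation*}
which is manifestly independent of $x$. Hence $\phi$ is sufficient for $x$ by Lemma \ref{fisherneyman}, and Theorem \ref{th:inv} then yields the invariance of the whole Amari-Chentsov structure. Equivalently, one may bypass the lemma and argue directly: from $\kappa_*(\bar p)(x,\phi(\om)) = \bar p(x,\om)$ one reads off $\p_V \ln \kappa_*(\bar p)(x,\phi(\om)) = \p_V\ln\bar p(x,\om)$, and substitution into (\ref{defis}) together with the change-of-variables formula identifies the transformed integral over $\Om$ with the original one, and likewise for the cubic integrand defining $T^{AC}$.

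The only genuinely technical points, which I expect to constitute the main (though modest) obstacle, are the careful justification of the pushforward density formula and the measurability bookkeeping: one must confirm that $\phi^{-1}$ is Borel measurable (immediate, as $\phi$ is a homeomorphism) and that $(M,\Om,\phi_*\mu,\phi_*p)$ is again a $k$-integrable parametrized measure model, which is guaranteed by Corollary \ref{cor:reg}. Beyond these routine verifications the statement is a direct specialization of Theorem \ref{th:inv} to the bijective case.
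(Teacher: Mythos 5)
Your proposal is correct and follows exactly the route the paper intends: the corollary is stated as an immediate consequence of Theorem \ref{th:inv}, the point being that a diffeomorphism $\phi$ of $\Om$, viewed as a statistic, is automatically sufficient because the transformed density is $\bar p(x,\phi^{-1}(\cdot))$ and hence the ratio $r(x,\om)$ of Lemma \ref{fisherneyman} is identically $1$, independent of $x$. The paper leaves these verifications implicit; you have supplied them correctly, including the absence of a Jacobian factor (since the reference measure is pushed forward as well) and the appeal to Corollary \ref{cor:reg} for the $k$-integrability of the transformed model.
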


\begin{remark}\label{sufhis} The first known  variant  of Theorem \ref{th:inv} is the second  part of the   monotonicity   Theorem (Theorem \ref{th:monoton}). 
The invariance  of the Amari-Chentsov structure on statistical models associated with finite sample spaces under sufficient statistics  has been discovered first by Chentsov \cite{Chentsov1982}.
\end{remark}

In what follows we  interpret the function $r(x, \om_1)$  assuming that  $\Om_1$  and $\Om_2$ are smooth manifolds supplied  with the Borel 
$\sigma$-algebra  and $\kappa$ is smooth. Furthermore, we assume that $\mu_1$ is  dominated by  a Lebesgue measure on $\Om_1$, i.e.  a measure that is locally equivalent to the Lebesgue measure  on $\R^n$. Then the set $\Om _2 ^{sing}$ of  singular values of $\kappa$ is a null set in $(\Om_2, \kappa_*(\mu_1))$. Let $\om_2$ be a regular value of $\kappa$. Then 
$\kappa^{-1}(\om_2)$ is  a smooth submanifold of $\Om_1$.  
Furthermore, any   sufficiently small   open neighborhood $U_\eps (\om_2)\subset \Om_2$ of $\om_2$ consists only of regular values of $\kappa$. Without loss of generality we assume that    the   preimage $\kappa^{-1} ( U_\eps (\om_2))$ is a direct product 
$U_\eps (\om_2) \times \kappa^{-1} (\om_2)$, which is the case if  $U_\eps(\om_2)$ is diffeomorphic to a ball. 
The measure  $\mu_1$ (respectively, $p_1(x)$) on the source space  and the induced  measure
$\kappa_*(\mu_1)$ (respectively, $\kappa_*(p_1(x))$) on the target space define a  ``vertical"   measure  $\mu^\perp _{\om_2}$, which depends on $\mu_1$,  on each fiber $\kappa^{-1} (\om_2)$  by the following formula: 
\begin{equation}
    d\mu^\perp_{\om_2} (\mu_1, y): = {d\mu_1 (\om_2, y)\over  d\kappa_* (\mu_1) (\om_2)} \label{def3}
\end{equation}
for all $y \in  \kappa^{-1} (\om_2)$. (Respectively, we replace $\mu_1$ by $p_1(x)$ in
the LHS and RHS of (\ref{def3})).  Here we identify a point $(\om_2, y\in \kappa^{-1} (\om_2))$ with the image of $y$ in $\Om_1$  via the inclusion 
$f ^{-1} (\om_2) \to \Om_1$. Note that $d\mu_{\om_2} ^\perp (\mu_1, y)$ is well-defined only if $\om_2 \in \kappa(\Om_1)$.

\begin{lemma} \label{lem:regcond} Assume that  the value $\om_2$ of a statistic $\kappa$ is regular. Then $\mu^\perp_{\om_2}(\mu_1)$  is a probability  measure on $\kappa^{-1} (\om_2)$ for any finite measure $\mu_1$ on $\Om_1$. 
\end{lemma}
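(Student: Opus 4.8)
The plan is to reduce to the local product structure around the regular value $\om_2$ and then read off the total mass of $\mu^\perp_{\om_2}$ directly from the defining ratio (\ref{def3}), which I would recognize as the fiberwise conditional of $\mu_1$ given $\kappa=\om_2$. First I would invoke the set-up established in the paragraph preceding the lemma: since $\om_2$ is regular, a small ball $U_\eps(\om_2)\subset\Om_2$ consists entirely of regular values and $\kappa^{-1}(U_\eps(\om_2))$ is the direct product $U_\eps(\om_2)\times\kappa^{-1}(\om_2)$, with $\kappa$ corresponding to the projection onto the first factor. Writing a point of this chart as $(\om_2',y)$ with $\om_2'\in U_\eps(\om_2)$ and $y\in\kappa^{-1}(\om_2)$, the hypothesis that $\mu_1$ is dominated by a Lebesgue measure $\lambda$ lets me split $\lambda$ along the submersion as $d\lambda=d\lambda_U(\om_2')\,d\nu(y)$ (base Lebesgue times induced fiber measure), and hence write $d\mu_1=\rho(\om_2',y)\,d\lambda_U(\om_2')\,d\nu(y)$ for a nonnegative, locally integrable density $\rho$ absorbing any submersion Jacobian.

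With this decomposition in hand I would compute the pushforward by Fubini's theorem: for a Borel set $A\subset U_\eps(\om_2)$,
\[
\kappa_*(\mu_1)(A)=\mu_1\bigl(A\times\kappa^{-1}(\om_2)\bigr)=\int_A\Bigl(\int_{\kappa^{-1}(\om_2)}\rho(\om_2',y)\,d\nu(y)\Bigr)\,d\lambda_U(\om_2'),
\]
so that $\kappa_*(\mu_1)$ has Lebesgue density $g(\om_2'):=\int_{\kappa^{-1}(\om_2)}\rho(\om_2',y)\,d\nu(y)$ on $U_\eps(\om_2)$. Substituting these two density decompositions into the defining ratio (\ref{def3}) and cancelling the common base factor $d\lambda_U$ yields, at the fixed base point $\om_2$,
\[
d\mu^\perp_{\om_2}(\mu_1,y)=\frac{\rho(\om_2,y)}{g(\om_2)}\,d\nu(y),
\]
which is exactly well defined when $g(\om_2)\neq 0$, i.e.\ when $\om_2\in\kappa(\Om_1)$ carries positive fiber mass, the caveat already recorded after (\ref{def3}).

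It then remains only to integrate this fiber density over $\kappa^{-1}(\om_2)$:
\[
\int_{\kappa^{-1}(\om_2)}d\mu^\perp_{\om_2}(\mu_1)=\frac{1}{g(\om_2)}\int_{\kappa^{-1}(\om_2)}\rho(\om_2,y)\,d\nu(y)=\frac{g(\om_2)}{g(\om_2)}=1,
\]
so $\mu^\perp_{\om_2}(\mu_1)$ is a probability measure. The essential point, and the only place where regularity of $\om_2$ is used, is the passage to the product chart that upgrades the formal symbol (\ref{def3}) to a genuine ratio of Lebesgue densities and licenses the Fubini interchange; once this is secured the normalization is automatic, since the fiber integral of the numerator density is by construction the base density sitting in the denominator. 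I expect the main obstacle to be the careful bookkeeping of the coarea splitting $d\lambda=d\lambda_U\,d\nu$, so that $g$ really is the Radon--Nikodym derivative of $\kappa_*(\mu_1)$; after that the cancellation $g/g=1$ is immediate, and because the computation is carried out pointwise at $\om_2$ the conclusion holds at \emph{every} regular value rather than merely almost everywhere, as the abstract disintegration theorem alone would give.
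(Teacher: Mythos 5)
Your proof is correct and rests on the same underlying identity as the paper's, namely that the fiberwise integral of $d\mu_1$ reproduces the pushforward $d\kappa_*(\mu_1)$, which is exactly the denominator in (\ref{def3}). The execution differs somewhat: the paper integrates both sides of (\ref{def3}) over a shrinking disk $D_\eps(\om_2)$, applies Fubini together with the tautology $\int_{\kappa^{-1}(D_\eps(\om_2))} d\mu_1 = \int_{D_\eps(\om_2)} d\kappa_*(\mu_1)$, and recovers the fiber integral as the limit of the ratio of these two equal quantities as $\eps \to 0$; you instead fix the product chart, write $d\mu_1 = \rho\, d\lambda_U\, d\nu$ and $d\kappa_*(\mu_1) = g\, d\lambda_U$ with $g(\om_2') = \int_{\kappa^{-1}(\om_2)} \rho(\om_2', y)\, d\nu(y)$, and cancel pointwise. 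Your version makes more explicit where the hypotheses enter (the product chart from regularity, the requirement $g(\om_2) \neq 0$, i.e.\ $\om_2 \in \kappa(\Om_1)$) and dispenses with the limiting step; the paper's version avoids committing to a local trivialization of the reference measures. One caveat on your closing remark: the Radon--Nikodym density $\rho$ is only determined $\lambda$-a.e., so the pointwise evaluation $\rho(\om_2,\cdot)$ at a \emph{fixed} base point --- and hence the claim that the normalization holds at \emph{every} regular value rather than almost every one --- requires either continuity of $\rho$ or a Lebesgue-differentiation argument, which itself only yields the identity for a.e.\ $\om_2$. This imprecision is shared by the paper's own proof (whose limit $\eps \to 0$ has the same character), so it is not a defect you introduce, but your stronger ``everywhere'' assertion is not actually earned by the density computation alone.
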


\begin{proof} 
We need to show that
\begin{equation}
    \int_{\kappa^{-1} (\om_2)}d\mu ^\perp_{\om_2}(\mu_1, y) = 1. \label{def3a}
\end{equation}

Let $g$ be a Riemannian metric on $\Om_2$. Denote by $D_\eps(\om_2)$  the disk with center  at $\om_2$ and of radius $\eps$. 
Using (\ref{def3}) and Fubini's formula  we obtain
\begin{equation}
    \int_{D_\eps(\om_2)} d\kappa _*(\mu_1)\int_{\kappa^{-1} (\om_2)}d\mu ^\perp_{\om_2} (\mu_1, y) = \int_{\kappa^{-1}(D_\eps(\om_2))} d\mu_1.\label{def3b}
\end{equation}
Taking into account 
\begin{equation}
    \int_{\kappa^{-1}(D_\eps(\om_2))} d\mu_1 = \int_{D_\eps(\om_2)}d\kappa_* (\mu_1),\label{def3c}
\end{equation}
we derive  from (\ref{def3b})
\begin{equation}
   \int _{\kappa^{-1} (\om_2)} d\mu ^\perp _{\om_2}(\mu_1, y)  = 
   \lim _{\eps \to 0} \frac{\int _{D_\eps (\om_2)} d\kappa_* (\mu_1)}{\int _{\kappa^{-1} (D_\eps (\om_2))} d\mu_1  } = 1.\label{for:cond1}
\end{equation}
This proves (\ref{def3a}) and Lemma \ref{lem:regcond}. 
\end{proof}

\begin{remark}\label{condp}
The measure    $\mu^\perp_{\om_2}$  is the  conditional distribution $(d\om_1| \om_2)$ of the variable (elementary event) $\om_1$ subject to the condition $\kappa = \om_2$.  In general,  a conditional distribution $ (d\om_1|\om_2)$ of the variable  $\om_1$ subject to  condition $\kappa = \om_2$  can be defined   for  measurable mappings, which need not be smooth. We refer to \cite [p. 81]{Kallenberg2001},  \cite[p. 106]{Borovkov1998} for a definition of  a conditional distribution in a general case.
\end{remark}


\begin{theorem}\label{ssta}  Assume that $\Om_1$  and $\Om_2$  are smooth manifolds  supplied with Borel $\sigma$-algebras and $\mu_1$ is a measure on $\Om_1$ dominated by a Lebesgue  measure. Let  
$(M, \Om_1, \mu_1, p_1)$  be  a $k$-integrable parametrized measure model. A   smooth   statistic $\kappa: (\Om_1, \mu_1) \to \Om_2$ is sufficient for the parameter  $x\in M$ if and only if  the conditional
distribution  $\mu _{\om_2}^\perp (p_1(x))$  defined on the set of regular values   $\om_2$ of $\kappa$
 is independent of $x\in M$.
\end{theorem}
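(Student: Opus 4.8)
The plan is to reduce the theorem to the factorization criterion of Lemma \ref{fisherneyman} by identifying, on each fiber over a regular value, the conditional distribution of $p_1(x)$ with the function $r(x,\cdot)$ appearing in that lemma.

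First I would compute the vertical measure of $p_1(x)$. Fix a regular value $\om_2$ and write a point of the fiber $\kappa^{-1}(\om_2)$ as $\om_1 = (\om_2, y)$. Replacing $\mu_1$ by $p_1(x)$ in the defining formula (\ref{def3}) and inserting the densities $\bar p_1(x,\om_1) = \frac{dp_1(x)}{d\mu_1}(\om_1)$ and $\kappa_*(\bar p_1)(x,\om_2) = \frac{d\kappa_*(p_1(x))}{d\kappa_*(\mu_1)}(\om_2)$ gives, at least formally,
\begin{equation*}
d\mu^\perp_{\om_2}(p_1(x),y) = \frac{\bar p_1(x,(\om_2,y))}{\kappa_*(\bar p_1)(x,\om_2)}\, d\mu^\perp_{\om_2}(\mu_1,y) = r(x,\om_1)\, d\mu^\perp_{\om_2}(\mu_1,y).
\end{equation*}
Since the reference measure $\mu^\perp_{\om_2}(\mu_1)$ is a probability measure (Lemma \ref{lem:regcond}) that does not involve $x$, this shows that $\mu^\perp_{\om_2}(p_1(x))$ is independent of $x$ exactly when the restriction of $r(x,\cdot)$ to $\kappa^{-1}(\om_2)$ is independent of $x$ for $\mu^\perp_{\om_2}(\mu_1)$-almost every $y$.

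Next I would match this fiberwise independence with the global independence of Lemma \ref{fisherneyman}. Fixing a pair $x,x' \in M$, the set $\{\om_1 : r(x,\om_1) \neq r(x',\om_1)\}$ is $\mu_1$-null precisely when, by the disintegration identity $\int_{\Om_1} f\, d\mu_1 = \int_{\Om_2}\big(\int_{\kappa^{-1}(\om_2)} f\, d\mu^\perp_{\om_2}(\mu_1)\big)\, d\kappa_*(\mu_1)(\om_2)$ underlying Example \ref{ex:statmodel}.4, this set meets $\kappa_*(\mu_1)$-almost every fiber in a $\mu^\perp_{\om_2}(\mu_1)$-null set. Reading ``$r$ does not depend on $x$'' as equality almost everywhere for every pair $x,x'$ (and likewise for the conditional distributions), the two formulations of independence therefore coincide, and the theorem follows from Lemma \ref{fisherneyman}.

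The step I expect to be the main obstacle is the first one, namely the rigorous justification that the vertical measure of the absolutely continuous measure $p_1(x) = \bar p_1(x,\cdot)\,\mu_1$ is exactly the $r(x,\cdot)$-reweighting of the vertical measure of $\mu_1$. This is a disintegration-of-an-absolutely-continuous-measure statement: one must verify that the base density satisfies $\kappa_*(\bar p_1)(x,\om_2) = \int_{\kappa^{-1}(\om_2)} \bar p_1(x,\cdot)\, d\mu^\perp_{\om_2}(\mu_1)$ and that the quotient $\bar p_1/\kappa_*(\bar p_1)$ is the fiber density, for $\kappa_*(\mu_1)$-almost every regular value $\om_2$. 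The hypotheses that $\mu_1$ is dominated by a Lebesgue measure and that $\kappa$ is smooth are what guarantee, via Sard's theorem, that the regular values carry full $\kappa_*(\mu_1)$-measure and that the vertical measures exist on a neighborhood of each regular value as in the setup preceding Lemma \ref{lem:regcond}.
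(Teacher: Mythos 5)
Your proposal is correct and follows essentially the same route as the paper: the paper's proof likewise writes $d\mu^{\perp}_{\kappa(\om_1)}(p_1(x),y)=\tilde\mu^{\perp}_{\kappa(\om_1)}(x,y)\,d\mu^{\perp}_{\kappa(\om_1)}(\mu_1,y)$, identifies the fiber density $\tilde\mu^{\perp}_{\kappa(\om_1)}(x,y)$ with $r(x,\om_1)$ via the relation $\bar p_1(x,(\kappa(\om_1),y))=\tilde\mu^{\perp}_{\kappa(\om_1)}(x,y)\,\kappa_*(\bar p_1)(x,\kappa(\om_1))$, and then invokes Lemma \ref{fisherneyman}. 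Your additional care in matching fiberwise almost-everywhere independence with global $\mu_1$-a.e.\ independence via the disintegration identity is a detail the paper leaves implicit, but it does not change the argument.
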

\begin{proof} 
 Representing a point $\om_1$ by the pair $(\kappa(\om_1), y)$, $y \in \kappa^{-1} (\kappa(\om_1))$, we write
\begin{equation}
d\mu ^{\perp} _{\kappa(\om_1)} ( p_1(x), y)  = \tilde \mu ^{\perp}_{\kappa(\om_1)}(x,y) d\mu ^{\perp}_{\kappa(\om_1)} (\mu_1,y).\label{ssta2}
\end{equation}
Observe that  (\ref{ssta2}) is equivalent to the following
 \begin{equation}
\bar p_1(x, (\kappa(\om_1), y)) = \tilde\mu^\perp _{\kappa(\om_1)} (x, y) \kappa_*(\bar p_1)(x, \kappa(\om_1)).\label{for:rdensity1}
\end{equation}
(\ref{for:rdensity1}) implies that $\tilde\mu^\perp _{\kappa(\om_1)} (x, y)$  coincides with $ r(x, \om_1)$. Now we obtain Theorem \ref{ssta} from Lemma \ref{fisherneyman} immediately.
 \end{proof}

Using  Lemma \ref{fisherneyman} and Theorem \ref{ssta} we will  present a proof of the monotonicity theorem (Theorem \ref{th:monoton}), 
  which characterizes  sufficient statistics  in terms of the Fisher information metric. 

\begin{theorem}\label{th:monoton}   (Monotonicity theorem, cf. \cite[Theorem 2.1]{AN2000}). Assume that $\Om_1$  and $\Om_2$  are  smooth manifolds provided with Borel  $\sigma$-algebra and $\mu_1$ is a Lebesgue measure on $\Om_1$.  Let $(M, \Om_1, \mu_1, p_1)$ be a $k$-integrable parametrized measure model  and $\kappa: \Om_1 \to \Om_2$  
a statistic. Denote by $\tilde g^F$  the Fisher metric on the  transformed  parametrized measure model  $(M, \Om_2, \kappa_*(\mu_1), \kappa_*(p_1))$. For each $x\in M$ and each $V \in T_xM$ we have 
\begin{equation}
\tilde g ^F (V, V)_x \le g ^F (V, V) _x.\label{eq:mon}
\end{equation}
Inequality (\ref{eq:mon}) becomes an equality for all $x \in M$  and for all $V\in T_x M$ if  and only if the statistic $\kappa$ is sufficient  
for the parameter $x\in M$.
\end{theorem}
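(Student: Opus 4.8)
The plan is to decompose the score $\p_V \ln \bar p_1(x,\om_1)$ relative to the fibers of $\kappa$ into a ``horizontal'' part, which is constant along each fiber, and a ``vertical'' part, and then to recognize the two Fisher forms as the $L^2(p_1(x))$-norms of the full score and of its horizontal component, respectively. The inequality (\ref{eq:mon}) then becomes the statement that passing to the fiberwise average does not increase the norm (a conditional Jensen, or Pythagorean, estimate), and the equality case is governed by the vanishing of the vertical part.

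First I would restrict to the open set of regular values and invoke the factorization (\ref{for:rdensity1}) from the proof of Theorem \ref{ssta}, namely $\bar p_1(x,(\kappa(\om_1),y)) = \tilde\mu^\perp_{\kappa(\om_1)}(x,y)\,\kappa_*(\bar p_1)(x,\kappa(\om_1))$. Taking logarithms and applying $\p_V$ splits the score additively as
\[
\p_V \ln \bar p_1(x,\om_1) = b(x,\om_1) + c(x,\om_1),
\]
where $b(x,\om_1) := \p_V \ln \kappa_*(\bar p_1)(x,\kappa(\om_1))$ depends on $\om_1$ only through $\kappa(\om_1)$, while $c(x,\om_1) := \p_V \ln \tilde\mu^\perp_{\kappa(\om_1)}(x,y)$ is the vertical term.

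Next I would expand $(\p_V\ln\bar p_1)^2 = b^2 + 2bc + c^2$ and integrate against $dp_1(x)$, disintegrating over the fibers by means of the transverse probability measures $\mu^\perp_{\om_2}(p_1(x))$ supplied by Lemma \ref{lem:regcond} together with $\kappa_*(p_1(x))$ on the base. Since $b$ is constant on each fiber and each $\mu^\perp_{\om_2}(p_1(x))$ has total mass $1$, the $b^2$-term integrates to $\int_{\kappa(\Om_1)} b^2\, d\kappa_*(p_1(x)) = \tilde g^F(V,V)_x$ by (\ref{defis}). The crucial point is that the cross term drops out: its fiber integral is
\[
\int_{\kappa^{-1}(\om_2)} c\, d\mu^\perp_{\om_2}(p_1(x),y) = \int_{\kappa^{-1}(\om_2)} \p_V \tilde\mu^\perp_{\om_2}(x,y)\, d\mu^\perp_{\om_2}(\mu_1,y) = \p_V(1) = 0,
\]
where I have used $d\mu^\perp_{\om_2}(p_1(x),y) = \tilde\mu^\perp_{\om_2}(x,y)\,d\mu^\perp_{\om_2}(\mu_1,y)$ and $\int_{\kappa^{-1}(\om_2)} \tilde\mu^\perp_{\om_2}(x,y)\, d\mu^\perp_{\om_2}(\mu_1,y) = 1$ for every $x$, again by Lemma \ref{lem:regcond}; this is exactly the fiberwise analogue of the identity (\ref{pushvector}). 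What then remains is $g^F(V,V)_x = \tilde g^F(V,V)_x + \int_{\Om_1} c^2\, dp_1(x) \ge \tilde g^F(V,V)_x$, which is (\ref{eq:mon}).

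Finally, for the equality case I would note that equality in (\ref{eq:mon}) for a given $x$ and $V$ holds exactly when $\int_{\Om_1} c^2\, dp_1(x) = 0$, i.e. $\p_V \ln \tilde\mu^\perp_{\kappa(\om_1)}(x,y) = 0$ for $p_1(x)$-almost all $\om_1$. Equality for all $x$ and all $V \in T_xM$ thus forces $\ln \tilde\mu^\perp_{\om_2}(x,y)$, hence the conditional distribution $\mu^\perp_{\om_2}(p_1(x))$, to be independent of $x$ (for $M$ connected); by Theorem \ref{ssta} this is precisely sufficiency of $\kappa$, and conversely sufficiency makes $c$ vanish. I expect the main obstacle to be the analytic justification of the interchanges of $\p_V$ with the fiber integrals, both in the additive splitting of the score and in the vanishing of the cross term; these require that the vertical densities $\tilde\mu^\perp_{\om_2}(x,y)$ be jointly measurable and continuously G\^ateaux-differentiable in $x$ with an integrable dominating bound, which I would extract from the $k$-integrability hypothesis (with $k\ge 2$, as is already needed for $g^F$ and $\tilde g^F$ to be defined) together with the disintegration set up before Lemma \ref{lem:regcond}.
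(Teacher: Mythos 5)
Your proposal is correct and follows essentially the same route as the paper: the same splitting of the score via the factorization (\ref{for:rdensity1}) into a part constant on fibers plus a vertical part, the same vanishing of the cross term by differentiating the normalization of the conditional measures (this is exactly Lemma \ref{lem:mup1}), and the same reduction of the equality case to Theorem \ref{ssta}. Your global Pythagorean identity $g^F = \tilde g^F + \int c^2\, dp_1(x)$ is precisely the information-loss formula (\ref{for:cramer}) that the paper records in Remark \ref{cramer}.
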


\begin{proof} 
Denote by $\Om ^{reg}_2$ the set of regular values of $\kappa$. Using (\ref{def3}),  we obtain
\begin{equation}
 g ^F (V, V)_x =\int_{\Om_2 ^{reg}}d\kappa_*(p_1(x))\int_{\kappa^{-1}(\om_2)} (\p _V \ln \bar p_1(x,y))^2\, d\mu_{\kappa(\om_1)} ^\perp(p_1(x), y).\label{for:monoton1}
\end{equation}
Recall that
\begin{equation}
\tilde g^F(V, V)_x =\int_{\Om_2^{reg}}(\p _V \ln \kappa_*(\bar p_1)(x, \om_2))^2 d\kappa_*(p_1(x)). \label{for:monoton2}
\end{equation}
To prove Theorem \ref{th:monoton}, comparing (\ref{for:monoton1}) with (\ref{for:monoton2}), it suffices to show   that for each $x\in M$ and for each
  $\om_2 \in \Om^{reg}_2$ the following inequality holds
\begin{equation}
\int_{\kappa^{-1}( \om_2)}(\p _V \ln \bar p_1(x,y))^2\, \mu_{\om_2} ^\perp(p_1(x), y)\ge (\p_V \ln \kappa_*(\bar p_1)(x,\om_2))^2, \label{for:monoton3}
\end{equation}
and the equality holds for all $x\in M$ and all  regular  values $\om_2$ if and only if $\kappa$ is sufficient for the parameter
$x\in M$.

Taking into account (\ref{for:rdensity1}) and Lemma \ref{lem:regcond}, we note that (\ref{for:monoton3}) is equivalent to the following inequality
\begin{eqnarray}
\int_{\kappa^{-1}( \om_2) }(\p _V \ln \kappa_*(\bar p_1)(x, \om_2) + \p_V \ln  \tilde \mu _{\om_2}^\perp(x, y))^2\, d\mu_{\om_2} ^\perp(p_1(x), y)\ge  \nonumber\\
\int_{\kappa^{-1}( \om_2)}(\p_V \ln \kappa_*(\bar p_1)(x, \om_2)) ^2\,d\mu_{\om_2} ^\perp(p_1(x), y). \label{for:monoton4}
\end{eqnarray}

\begin{lemma}\label{lem:mup1} For all $x\in M$ we have
\begin{equation}
\int_{\{ y\in \kappa( \om_1)\} }\p_V \ln \tilde \mu_{\kappa(\om_1)}^\perp (x, y) d\mu_{\kappa(\om_1)}^\perp (p_1(x), y)= 0.\label{for:mup1}
\end{equation}
\end{lemma}
\begin{proof} Writing $\mu_{\kappa(\om_1)}^\perp (p_1(x))= \tilde \mu ^\perp _{\kappa(\om_1)}(x, y)\mu _{\kappa(\om_1)} ^\perp (\mu_1)$, we observe that (\ref{for:mup1}) is a consequence of  the following identity for
all $x\in M$:
$$\int_{\{ y\in \kappa( \om_1)\} } \tilde \mu_{\kappa(\om_1)}  ^\perp(x, y) d\mu _{\kappa(\om_1)} ^\perp (\mu_1, y) = 1,$$
whose validity follows from Lemma \ref{lem:regcond}.
\end{proof}

Clearly (\ref{for:monoton4}) follows from Lemma \ref{lem:mup1}, since  $\p _V \ln \kappa_*(\bar p_1)(x, \om_2)$ does not depend on $y$.
Note that (\ref{for:monoton4}), and hence (\ref{for:monoton3}), becomes an equality if and only if $\mu ^\perp _{\kappa(\om_1) }(p_1(x))$ is independent  of $x$. By Theorem \ref{ssta} the last condition is equivalent to the
sufficiency of the statistic $\kappa$  for the parameter $x\in M$. This proves   Theorem \ref{th:monoton}. 
\end{proof}

\begin{remark}\label{cramer}   Assume that  a statistic $\kappa$ is smooth. 
Denote by $\hat g ^F_{\om_2}$ the Fisher quadratic form
on the statistical model $\mu^\perp_{\om_2}(p_1(x), y)$ with respect to the reference measure $\mu ^\perp_{\kappa(\om_1)}(\mu_1, y)$  
as in (\ref{ssta2}). Taking into account (\ref{for:monoton1}), (\ref{for:monoton2}) and (\ref{for:mup1}) we obtain immediately the following 
equality   for all $x\in M$  and all $V \in T_x M$ (cf. \cite[Theorem 2.1]{AN2000})
\begin{equation}
g ^F(V, V)  = \tilde g ^F(V, V) +\int_{\Om_2} \hat g^F _{\om_2}(V, V)d\kappa_*(p_1(x)).\label{for:cramer}
\end{equation}
The integral in the RHS of (\ref{for:cramer}) is called the information loss \cite[p.30]{AN2000}.
\end{remark}

\begin{example}\label{ex:suff1}  Let $\Om_n$  be a finite  set of $n$  elements $E_1, \cdots, E_n$.  Let $\mu_n$  be the probability
distribution on $\Om_n$  such that $\mu_n (E_i) = 1/ n$  for $i \in [1,n]$.  Clearly,
the space $\Pp(\Om_n, \mu_n)$ consists of all probability distributions $p$ on $\Om_n$
which can be represented as
\begin{equation}
    p (E_i) = f(E_i) \mu_n \text{ for } i \in [1,n]\label{capmun}
\end{equation}
for some non-negative  function $f: \Om_n \to \R$ such that $\sum_{i = 1} ^n f(E_i) = n$.
Denote
by $E_i^*$ the Dirac measure on $\Om_n$  concentrated at $E_i$.  The space
$\Mm_{+}(\Om_n, \mu_n)$ of  measures equivalent to $\mu_n$ consists of all measures $p = \sum_{i=1}^n p_i E_i^*,
 p_i > 0$, so  it is the positive cone $\R ^n_+$.
Let $n \le m< \infty$.
Let $\{ \hat F_1,\cdots , \hat F_n \}$ be a partition  of the set $\Om_m :=\{F_1, \cdots, F_m\}$ into disjoint subsets. Denote this partition by $\bar \kappa$. We associate   
$\bar \kappa$ with a map $\kappa: \Om_m  \to \Om_n$   by
setting
$$\kappa (x): = E_i \text{ if  }  x \in \hat F_i .$$
We identify  $\Mm_{+}(\Om_m, \mu_m)$  with $\R ^m _+$ which  is  the convex hull of  the Dirac measures $F^*_j, j \in [1,m]$.  Recall that  a linear mapping   
$\Pi : \R^n  \to \R^m, \, \Pi (E^* _k) : = \sum _{j=1}^m\Pi_{kj}F^*_j,$  is called {\it a  Markov  mapping}, if $\Pi_{ij} \ge 0$ and  $\sum_{j =1} ^m \Pi_{kj} = 1$ 
(cf.  Example \ref{ex1}). Following Chentsov \cite[p. 56 and Lemma 9.5, p. 136]{Chentsov1982}, we  call $\Pi$  {\it   a  Markov congruent  embedding
subjected to a  partition $\bar \kappa$} if
\begin{itemize}
\item $F_j \not\in \kappa ^{-1} (E_i) \;\; \LRA \;\; \Pi(E _i ^*) (F_j)  = 0$, 
\item $\Pi (E_i ^*) \not = 0$ for all $i \in [1,n]$.
\end{itemize}
Note that $\Pi(\Mm(\Om_n, \mu_n)) \subset \Mm(\Om_m, \mu_m)$. The restriction of $\Pi$ to $\R^n_+ = \Pp_+(\Om_n, \mu_n)$   as well to  $\Mm_+(\Om_n, \mu_n)$
is also denoted  by $\Pi$. 

\begin{proposition}\label{congruent1} 1. Let $\Pi: \Pp_+(\Om_n, \mu_n) \to \Mm(\Om_m, \mu_m)$ be  the restriction of a Markov mapping such that 
$(\Pp_+(\Om_n, \mu_n), \Om_m, \mu_m, \Pi)$  is an immersed statistical model of dimension $n-1$. 
A statistic $\kappa: \Om_m \to \Om_n$ is sufficient for the parameter
$x\in (\Pp_+(\Om_n, \mu_n), \Om_n, \mu_n, \Pi)$, if $\Pi$ is
a Markov congruent embedding subjected to $\kappa$.

2.  Let $\Pi: \Mm_+(\Om_n, \mu_n) \to \Mm(\Om_m, \mu_n)$ be  the restriction of a Markov mapping such that 
$(\Mm_+(\Om_n, \mu_n), \Om_n, \mu_n, \Pi)$  is an immersed parametrized model of dimension $n$. 
A statistic $\kappa: \Om_m \to \Om_n$ is sufficient for the parameter
$x\in (\Mm_+(\Om_n, \mu_n), \Om_n, \mu_n, \Pi)$, if $\Pi$ is
a Markov congruent embedding subjected to $\kappa$.
\end{proposition}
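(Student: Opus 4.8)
The plan is to verify the Fisher--Neyman factorization (\ref{suff1}) of Definition \ref{suff} by a direct computation of the density potential of the push-forward model; equivalently, one may form the ratio $r(x,\om_1)$ of Lemma \ref{fisherneyman} and check that it is $x$-independent. The only structural input is that a Markov congruent embedding subjected to the partition $\bar\kappa$ forces, for each $F_j$, all but one of the entries $\Pi_{ij}$ to vanish, the surviving one being indexed by the unique $E_i$ with $\kappa(F_j)=E_i$.

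First I would compute the density potential. For a parameter $p=\sum_{i=1}^n p_iE_i^*\in\Pp_+(\Om_n,\mu_n)$, linearity gives $\Pi(p)=\sum_{i,j}p_i\Pi_{ij}F_j^*$. Let $i(j)$ denote the unique index with $F_j\in\kappa^{-1}(E_{i(j)})$. Since $\bar\kappa$ is a partition and $\Pi_{ij}=0$ whenever $F_j\notin\kappa^{-1}(E_i)$, the $F_j^*$-coefficient of $\Pi(p)$ equals $p_{i(j)}\Pi_{i(j)j}$. Dividing by the reference measure gives
\[
\bar\Pi(p,F_j)=s(p,\kappa(F_j))\,t(F_j),\qquad s(p,E_i):=p_i,\quad t(F_j):=\frac{\Pi_{i(j)j}}{\mu_m(F_j)}.
\]
This is precisely (\ref{suff1}): $s$ depends on $F_j$ only through $\kappa(F_j)$, and $t$ is independent of $p$; the integrability requirements of Definition \ref{suff} hold trivially because $\Om_n$ and $\Om_m$ are finite. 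Hence $\kappa$ is sufficient.

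Alternatively, to apply Lemma \ref{fisherneyman}, I would push forward: summing over a fibre and using that the rows of $\Pi$ sum to one while only the $F_j\in\kappa^{-1}(E_i)$ contribute to row $i$ yields $\kappa_*(\Pi(p))(E_i)=p_i\sum_j\Pi_{ij}=p_i$, so that $\kappa_*\circ\Pi$ recovers the parameter. Then $\kappa_*(\bar\Pi)(p,E_i)=p_i/\kappa_*(\mu_m)(E_i)$, and in the ratio $r(p,F_j)$ the $p$-dependent factor $p_{i(j)}$ cancels, leaving a quantity independent of $p$ --- which is the hypothesis of Lemma \ref{fisherneyman}. Part (2) is identical: dropping the normalization $\sum_ip_i=1$ and writing a parameter as $\mu=\sum_i\mu_iE_i^*\in\Mm_+(\Om_n,\mu_n)=\R^n_+$, neither the factorization nor the fibrewise sum $\kappa_*(\Pi(\mu))(E_i)=\mu_i$ uses the constraint, so the computation carries over verbatim.

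The step needing the most care is not the algebra but checking that the density potential is genuinely defined and positive $\mu_m$-almost everywhere, i.e.\ that $\Pi(p)$ is equivalent to $\mu_m$. This is exactly what the standing hypothesis that $(\Pp_+(\Om_n,\mu_n),\Om_m,\mu_m,\Pi)$ is an immersed statistical model (resp.\ parametrized model in part (2)) supplies: it forces $\Pi_{ij}>0$ precisely when $F_j\in\kappa^{-1}(E_i)$, so that $t(F_j)>0$ and the factorization above is meaningful. With this in hand no topological subtleties arise, the sample spaces being finite.
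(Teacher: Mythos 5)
Your proof is correct, but it takes a different route from the paper. The paper disposes of part~1 by citing Chentsov's Lemmas 6.1 and 9.5 from \cite{Chentsov1982}, and then obtains part~2 from part~1 together with a separate scaling lemma (Lemma \ref{lem:scale}), which shows that sufficiency is preserved when one passes from $p(x)$ to $t\,p(x)$ and thus lets one pass from $\Pp_+(\Om_n,\mu_n)$ to $\Mm_+(\Om_n,\mu_n)\cong \Pp_+(\Om_n,\mu_n)\times\R^+$. You instead verify the Fisher--Neyman factorization (\ref{suff1}) directly: the defining property of a Markov congruent embedding forces each column of $(\Pi_{ij})$ to have exactly one nonzero entry, indexed by $i(j)$ with $\kappa(F_j)=E_{i(j)}$, so the density potential splits as $s(p,\kappa(F_j))\,t(F_j)$ with $s(p,E_i)=p_i$; and since this computation never uses $\sum_i p_i=1$, part~2 follows verbatim rather than via the scaling lemma. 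Your version is self-contained and also yields along the way the identity $\kappa_*\circ\Pi=\mathrm{Id}$, which the paper states separately just before Corollary \ref{co:chentsov}; what the paper's route buys is brevity and the reusable Lemma \ref{lem:scale}. One small quibble: you credit the positivity $\Pi_{i(j)j}>0$ to the immersion hypothesis, whereas it already follows from the ``$\Longleftrightarrow$'' in the definition of a Markov congruent embedding (together with $\Pi_{ij}\ge 0$); your attribution is not wrong --- a parametrized measure model must land in $\Mm_+(\Om_m,\mu_m)$, which also forces it --- but the more direct source is the definition itself.
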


\begin{proof} The first assertion  of  Proposition \ref{congruent1}   follows directly from  the  Chentsov   results \cite[Lemma 6.1, p.77 and Lemma 9.5, p.136]{Chentsov1982}.

The second  assertion  of Proposition \ref{congruent1} is a consequence of the  first assertion and the following

\begin{lemma}\label{lem:scale} Assume $(M, \Om, \mu, p)$ is a   parametrized measure model and $\kappa: \Om \to \Om ' $ is sufficient  for the parameter $x \in (M, \Om, \mu, p)$. Then  $\kappa$ is also sufficient   for the parameter $x \in (M\times (0,1), \Om, \mu, \hat p(x, t): = t p (x))$.
\end{lemma} 
\begin{proof}[Proof of Lemma \ref{lem:scale}]  Since $\kappa_* (t\mu) = t\kappa_*(\mu)$ for any finite measure $\mu$ on $\Om$ and $ t \in \R ^+$,  we get
$$\frac{d(t p(x))}{d\kappa_*(tp(x))} = \frac{dp(x)}{d\kappa_*(p(x))}.$$
Taking into account Lemma \ref{fisherneyman}, this  proves Lemma \ref{lem:scale}.
\end{proof}
This  completes  the proof  of  Proposition \ref{congruent1}.
\end{proof}
\end{example}

Since  $\kappa_* \circ \Pi  = Id$ for Markov congruent embeddings $\Pi$, using Theorem \ref{th:inv} we obtain immediately

\begin{corollary}\label{co:chentsov}  Let  $\Pi: \Mm(\Om_n, \mu_n) \to \Mm(\Om_m, \mu_m)$ be a Markov congruent embedding. 
Then  the Amari-Chentsov structure on $\Mm_{+}(\Om_n, \mu_n)$ coincides with the Amari-Chentsov structure on 
$(\Mm_{+}(\Om_n, \mu_n), \Om_m, \mu_m, \Pi)$.
\end{corollary}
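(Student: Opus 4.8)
The plan is to read this corollary as a direct application of the invariance Theorem \ref{th:inv} to the statistic $\kappa$ underlying the congruent embedding $\Pi$. Concretely, I would regard the ``original'' Amari-Chentsov structure on $\Mm_{+}(\Om_n, \mu_n)$ as the one carried by the canonical statistical model $(\Mm_{+}(\Om_n, \mu_n), \Om_n, \mu_n, Id)$ of Example \ref{ex:statmodel}.1, and compare it with the structure induced by the parametrization $\Pi$ into $\Mm(\Om_m, \mu_m)$, i.e. the structure on $(\Mm_{+}(\Om_n, \mu_n), \Om_m, \mu_m, \Pi)$.

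First I would invoke Proposition \ref{congruent1}.2: since $\Pi$ is a Markov congruent embedding subjected to the partition $\bar\kappa$, the associated statistic $\kappa: \Om_m \to \Om_n$ is sufficient for the parameter $x$ in the model $(\Mm_{+}(\Om_n, \mu_n), \Om_m, \mu_m, \Pi)$. This is the only substantive input, and it is already furnished by the Chentsov-based results cited in that proposition.

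Next I would identify the model obtained by transforming $(\Mm_{+}(\Om_n, \mu_n), \Om_m, \mu_m, \Pi)$ under $\kappa$. By the construction of the transformed model (the parametrized measure model $(M, \Om_2, \kappa_*(\mu_1), \kappa_*(p_1))$ with density potential \eqref{defe1a}), it equals $(\Mm_{+}(\Om_n, \mu_n), \Om_n, \kappa_*(\mu_m), \kappa_* \circ \Pi)$. Because $\kappa_* \circ \Pi = Id$ for a Markov congruent embedding, the parametrization collapses to the identity, so the transformed model is $(\Mm_{+}(\Om_n, \mu_n), \Om_n, \kappa_*(\mu_m), Id)$. Invoking the independence of the reference measure (Remark \ref{rem:gen1}.1), its Amari-Chentsov structure agrees with that of $(\Mm_{+}(\Om_n, \mu_n), \Om_n, \mu_n, Id)$, i.e. with the original structure on $\Mm_{+}(\Om_n, \mu_n)$.

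Finally, since $\kappa$ is sufficient, Theorem \ref{th:inv} guarantees that the Amari-Chentsov structure transformed by $\kappa$ equals the untransformed structure on $(\Mm_{+}(\Om_n, \mu_n), \Om_m, \mu_m, \Pi)$. Chaining this equality with the identification of the previous paragraph yields the assertion. I expect no genuine obstacle here; the only point requiring care is the bookkeeping of source and target sample spaces, together with the use of reference-measure independence to match $\kappa_*(\mu_m)$ with $\mu_n$.
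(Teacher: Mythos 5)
Your proposal is correct and is essentially the paper's own argument: the paper's proof is precisely the observation that $\kappa_*\circ\Pi=Id$ for a Markov congruent embedding combined with Theorem \ref{th:inv} and the sufficiency of $\kappa$ from Proposition \ref{congruent1}. You have merely spelled out the bookkeeping (identifying the transformed model as $(\Mm_{+}(\Om_n,\mu_n),\Om_n,\kappa_*(\mu_m),Id)$ and invoking reference-measure independence) that the paper leaves implicit.
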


\begin{remark}\label{rem:chentsovthm} A variant of Proposition \ref{congruent1}  has been proved by Chentsov \cite[Lemma 6.1, p.77 and Lemma 9.5, p.136]{Chentsov1982}, see also Proposition \ref{pro:immark} below.  It  plays a decisive role in the Chentsov theorem \cite{Chentsov1982} on geometric structures on statistical models $(M, \Om_n, \mu_n, p)$ that  are invariant under
sufficient statistics, which we   reformulate in Proposition \ref{chentsovcampbell} below, see also the explanation that follows Proposition \ref{chentsovcampbell}.  Proposition \ref{congruent1} implies that   such   geometric structures  are preserved under Markov congruent embeddings, which are
easier to understand.
\end{remark} 

 
\begin{proposition}\label{chentsovcampbell}
(1) (cf. \cite[Lemma 11.1 p. 157]{Chentsov1982})  Assume that $C$ is a  continuous function  on  statistical  models $(\Pp_+(\Om_n, \mu_n), \Om_n,\mu_n, Id )$  such that $C$ is invariant under  Markov  congruent embeddings.  Then $C$ is a constant.
 
(2) (cf. \cite[Lemma 11.2, p. 158]{Chentsov1982}) Assume that a  $A$   is
a continuous  1-form field    on  statistical  models 
$(\Pp_+(\Om_n, \mu_n), \Om_n, \mu_n, Id)$   such that $A$ is invariant under  Markov  congruent embeddings.  
Then $A$  equals   zero.  

(3) (cf. \cite[Theorem 11.1, p. 159]{Chentsov1982}) Assume that   $F$ is a continuous  quadratic  form field  on  statistical models  
$(\Pp_+(\Om_n, \mu_n), \Om_n, \mu_n, p)$    such that  $F$ is invariant under  Markov congruent embeddings. Then  

(4) (cf. \cite[Theorem 12.2, p.175]{Chentsov1982}) Assume that  $T$ is
a continuous   covariant   3-tensor field  on
statistical  models  $(\Pp_+(\Om_n, \Om_n), \Om_n, \mu_n, Id)$    such that  $F$ is invariant under  Markov congruent  embeddings.  Then   there is a continuous function  $t: \R \to R$  such that  $T  = t (\sum _{i=1}^n p_i(x))\cdot  T^{AC}$   where $T^{AC}$ is the  Amari-Chentsov  tensor. 
 \end{proposition}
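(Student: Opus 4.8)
The plan is to reduce all four statements to an analysis of the \emph{pointwise} tensor field on the finite-dimensional simplices and to exploit the fact, recorded in Corollary \ref{co:chentsov}, that Markov congruent embeddings preserve the induced structures. By Remark \ref{re:comp1}.1 a local field is determined by a single pointwise tensor field $\tilde A$ on $\Mm(\Om)$, so it suffices to pin down $\tilde A$ on each $\R^n_+ \cong \Mm_+(\Om_n,\mu_n)$ and, for the assertions about $\Pp_+$, to restrict to the subsimplex $\sum_i p_i = 1$. The engine of the whole argument is that two very simple kinds of Markov congruent embeddings already generate enough relations: first, the permutations of $\Om_n$ (embeddings with $\kappa$ a bijection), which impose full symmetry of $\tilde A$ under the symmetric group $S_n$; and second, the binary \emph{splittings} $\Om_n \hookrightarrow \Om_{n+1}$, where one state $E_i$ is split into two states $E_{i'},E_{i''}$ via $\Pi(E_i^*)=\lambda E_{i'}^*+(1-\lambda)E_{i''}^*$ and $\Pi(E_k^*)=E_k^*$ for $k\neq i$, so that $\Pi(p)=(\dots,\lambda p_i,(1-\lambda)p_i,\dots)$ and $\kappa_*\circ\Pi=\mathrm{Id}$. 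Invariance under these two families, together with continuity and the density of rational distributions, is what I would use throughout.

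For part (1) I would argue as follows. Given a rational $p$ with $p_i=a_i/N$ and $\sum_i a_i=N$, repeated binary splittings (each splitting off a piece of mass $1/N$, i.e.\ with $\lambda=1/a_i$ at the relevant step) refine $p$ to the uniform distribution $\mu_N$ on $N$ points without changing the value of $C$; hence $C(p)=C(\mu_N)$. Since $\mu_N$ refines to $\mu_{kN}$ by an equal $k$-fold splitting, all the values $C(\mu_N)$ coincide (compare two of them through a common refinement $\mu_{NM}$), so $C$ is constant on rational distributions and, by continuity, constant everywhere. For part (2) the same two families are applied to the $1$-form. Permutation invariance together with continuity forces $A_p$ to have the symmetric shape $A_p(V)=\sum_i h(p_i,p)\,V_i$ on tangent vectors $V=\sum_i V_iE_i^*$ subject to $\sum_i V_i=0$; split-invariance then yields a functional equation for $h$ whose only continuous solution compatible with that constraint makes $A_p(V)$ a multiple of $\sum_i V_i=0$, so $A\equiv 0$. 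This reproves Chentsov's Lemma 11.2.

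For parts (3) and (4) the method is identical in spirit but now requires the invariant theory of symmetric bilinear and trilinear forms under $S_n$. I would first classify, at a symmetric base point, the $S_n$-invariant symmetric $2$- and $3$-tensors on the simplex: up to the constraint $\sum_i V_i=0$ these are spanned by finitely many explicit building blocks (in degree two, essentially $\sum_i V_i^2/p_i$, which is the Fisher form $g^F$, together with products of the degenerate $1$-form $A(V)=\sum_i V_i$ of part (1); in degree three, $\sum_i V_i^3/p_i^2$, which is $T^{AC}$, together with symmetrizations built from $g^F$ and that $1$-form). Note that on $\Pp_+$ every block involving $A$ vanishes identically since $\sum_i V_i=0$ there. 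The splitting relations then become functional equations for the scalar coefficients in front of these blocks; solving them shows that the coefficients may depend only on the total mass $\sum_i p_i$ (hence are genuine constants on $\Pp_+$, where $\sum_i p_i=1$), and that the only block surviving with a free coefficient is $g^F$ in degree two and $T^{AC}$ in degree three. Matching the surviving block with its explicit formula identifies the structure as a multiple of the Fisher form, respectively of the Amari-Chentsov tensor $T^{AC}$.

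I expect the main obstacle to lie in part (4): carrying out the $S_n$-invariant decomposition of symmetric $3$-tensors on the simplex and, above all, checking that the binary-splitting functional equations annihilate every cross-term (the pieces built from $g^F$ and from the degenerate $1$-form) while constraining the coefficient of $T^{AC}$ to depend on nothing but the total mass. The bookkeeping of how a single split acts on the three tensor slots simultaneously, and the verification that continuity plus density legitimately upgrades the relations from rational to arbitrary distributions, is the delicate step; the corresponding analysis for the quadratic form in part (3) is a strictly easier special case of the same computation.
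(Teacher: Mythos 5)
Your proposal is correct and follows essentially the same route as the paper, which does not prove Proposition \ref{chentsovcampbell} itself but defers to Chentsov's Lemmas 11.1--11.2 and Theorems 11.1 and 12.2, summarizing the argument as resting on the ``almost homogeneity'' of the simplices under Markov congruent embeddings together with permutation invariance. Your two generating families --- refinement of rational distributions to uniform ones by binary (or multi-fold) splittings, and permutations --- plus continuity and density are precisely those ingredients, so this is a faithful reconstruction of the cited proof rather than a different approach.
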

 
The argument of Chentsov for proving (1) (actually for its general form in \cite[Lemma 11.1]{Chentsov1982}) is based on the fact that the elementary  geometry  (with respect to the  Markov congruent embeddings $\Pi$) of
the   spaces  $(\Pp_+ (\Om_n, \mu_n), \Om_m, \mu_m, \Pi)$ is almost homogeneous.  The Chentsov proof of (2) rests  on (1)  and on the
permutation invariance, because a map from $\Om_n$ to itself that
permutes the points of $\Om$ is clearly a sufficient statistic.  The   Chentsov proof of (3)  uses  similar  arguments.
Chentsov   gave  a  proof  of  (4)  in  an equivalent formulation, namely  the uniqueness  of  the Chentsov-Amari  connections  among those affine connections that  are invariant  under  Markov  embeddings, see also Remark \ref{rem:hisac}.
  
  In \cite{Campbell1986} Campell  gave  a generalization of  the second  assertion of Proposition  \ref{chentsovcampbell}  for  parametrized  measure models associated  with   finite sample spaces.  A  generalization  of  Proposition \ref{chentsovcampbell}  for  parametrized  measure models
  associated  with   finite sample spaces is given in the following
  
\begin{corollary}\label{cor:chentsovcampbell}  (1) Assume that a  $A$   is
a continuous  1-form field    on  parametrized  measure  models 
$(\Mm_+(\Om_n, \mu_n), \Om_n, \mu_n, Id)$   such that $A$ is invariant under   Makov congruent  embeddings.  
Then there is a continuous function $c: \R \to \R$  such that for all $x \in M$  and  all $V \in T_x M \subset \R^n$, $A_x(V) = c(\sum_{i=1} ^n p_i(x))\sum_{i =1} ^n p_i \p_V \ln p_i(x)$. 

(2)   Assume that   $F$ is a continuous  quadratic  form field  on  parametrized models  
$(\Mm_+(\Om_n, \mu_n), \Om_n, \mu_n, Id)$    such that  $F$ is invariant under  Markov congruent  embeddings. Then  there are continuous functions $f, d: \R \to \R$ such that   $ F = f (\sum _{i=1}^n p_i(x))\cdot g^F + d(\sum _{i=1}^n p_i(x)) A ^2$ where 
$A$ is  the 1-form field  described  in (1)  with $c = 1$  and $ g^F$ is the Fisher metric. 

(3)  Assume that  $T$ is
a continuous   covariant symmetric  3-tensor field  on
statistical  models  $(\Mm_+(\Om_n,\mu_n), \Om_n, \mu_n, p)$  associated with  finite  sample spaces $\{\Om_n\}$  such that  $F$ is invariant under  Markov congruent  embeddings.  Then   there is a continuous function  $t: \R \to R$  such that  $T  = t (\sum _{i=1}^n p_i(x))\cdot  T^{AC} +  g^F \cdot A_2 + A^3_1 $   where $g^F$ and $T^{AC}$ are the Fisher metric and  the Amari-Chentsov tensor  respectively, and
 $A_1, A_2$  are the fields described in  (1).
\end{corollary}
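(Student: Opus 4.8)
The plan is to reduce the statement on the cone $\Mm_+(\Om_n, \mu_n) \cong \R^n_+$ to the already-settled simplex case of Proposition \ref{chentsovcampbell} by splitting the cone into its total-mass (radial) direction and the probability simplex. Write $r = \sum_{i=1}^n p_i$ for the total mass and let $\mathcal{E} = \sum_i p_i \partial_{p_i}$ be the Euler (radial) vector field, so that $\partial_{\mathcal{E}} \ln p_i = 1$ for all $i$. At each $\mu \in \Mm_+(\Om_n, \mu_n)$ the tangent space splits $g^F$-orthogonally as $T_\mu \Mm_+ = \langle \mathcal{E}\rangle \oplus T_{\bar\mu}\Pp_+$, the second summand being the mass-preserving directions, i.e.\ those $V$ with $\sum_i p_i \partial_V \ln p_i = 0$. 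The crucial compatibility observation is that every Markov congruent embedding $\Pi$ is linear and preserves total mass (since $\sum_j \Pi_{ij} = 1$), whence $d\Pi(\mathcal{E}) = \mathcal{E}$ and $d\Pi$ carries mass-preserving directions to mass-preserving directions; thus the whole radial--simplex splitting is respected by all Markov congruent embeddings, and the restriction of any invariant tensor to the mass-preserving directions is again Markov-invariant.

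First I would treat part (1). Restricting an invariant $1$-form field $A$ to the mass-preserving directions yields an invariant $1$-form field on the simplices $(\Pp_+(\Om_n, \mu_n), \ldots)$, which vanishes by Proposition \ref{chentsovcampbell}(2). Hence $A$ depends only on the radial component, so $A(V) = \lambda(\mu)\,\sum_i p_i \partial_V \ln p_i$ with $\lambda(\mu) = A(\mathcal{E})/r$. As $A$ and the canonical mass-change form $\sum_i p_i\partial_V\ln p_i$ are both invariant, so is $\lambda$; its restriction to each mass level $\{r = a\} = a\cdot\Pp_+$ is a Markov-invariant function, hence constant by Proposition \ref{chentsovcampbell}(1), with these constants matched across different sample-space dimensions by the mass-preserving embeddings. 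Therefore $\lambda = c(r)$ for a single continuous $c$, giving the asserted formula.

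For parts (2) and (3) I would decompose the tensor according to how many radial versus tangential arguments it receives and evaluate each block, using $g^F(\mathcal{E},\mathcal{E}) = r$, $A(\mathcal{E}) = r$ (for $c=1$), $g^F(\mathcal{E}, V_{\mathrm{tan}}) = 0$, and $T^{AC}(\mathcal{E}, V_{\mathrm{tan}}, W_{\mathrm{tan}}) = g^F(V_{\mathrm{tan}}, W_{\mathrm{tan}})$. For a quadratic form $F$, the tangential block gives $f(r)\,g^F$ by Proposition \ref{chentsovcampbell}(3), the mixed radial--tangential block is a simplex $1$-form and so vanishes by (2), and the purely radial block $F(\mathcal{E},\mathcal{E})$ is a Markov-invariant function of $r$; solving the scalar relation $F(\mathcal{E},\mathcal{E}) = f(r)r + d(r)r^2$ produces $d(r)$ and yields $F = f(r)g^F + d(r)A^2$. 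For a symmetric $3$-tensor $T$ the identical scheme yields $t(r)\,T^{AC}$ from the tangential block [Proposition \ref{chentsovcampbell}(4)], a multiple of $g^F$ from the one-radial block [Proposition \ref{chentsovcampbell}(3)], vanishing of the two-radial block [Proposition \ref{chentsovcampbell}(2)], and a scalar from the purely radial block; matching these against the building blocks $T^{AC}$, $g^F\odot A_2$ and $A_1^3$ (using $(g^F\odot A)(\mathcal{E},V,W) = r\,g^F(V,W) = r\,T^{AC}(\mathcal{E},V,W)$) fixes the coefficient functions.

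The main obstacle I anticipate is the bookkeeping that aligns the block evaluations with the symmetric products $g^F\odot A_2$ and $A_1^3$ and disentangles the overlapping contributions of $T^{AC}$ and $g^F\odot A_2$ on the one-radial block: one computes the symmetrized products on mixed radial/tangential arguments and then inverts a small triangular system, solving successively for $t(r)$, then $c_2(r)$, then the real cube root $c_1(r)$. Continuity of $c, f, d, t$ (and of $c_1, c_2$) follows since each arises from the continuous tensors $F, T$ and the continuous building blocks by evaluation and division by positive powers of $r$, all continuous on $\Mm_+$ where $r>0$. A secondary point requiring care is that invariance be verified across different sample-space dimensions, so that the coefficients are genuine universal functions of $r$; this is precisely where the mass-preserving property of Markov congruent embeddings, combined with Proposition \ref{chentsovcampbell}(1), is used.
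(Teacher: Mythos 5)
Your proposal is correct and follows essentially the same route as the paper: an orthogonal decomposition with respect to the Fisher metric into the radial (total-mass) direction and the tangent space to the constant-mass level set $\Mm^a_+(\Om_n,\mu_n)$, with the invariance of each block under Markov congruent embeddings reducing everything to the corresponding parts of Proposition \ref{chentsovcampbell}. Your write-up simply makes explicit (via the Euler field $\mathcal{E}$ and the block evaluations) the bookkeeping that the paper leaves implicit.
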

  
\begin{proof}
(1) Using the  induced  Fisher metric on $T^*\Mm_+(\Om_n, \mu_n)$, we decompose  the 1-form $A\in T^*\Mm_{+}(\Om_k, \mu_k)$
into a sum of two  orthogonal    1-forms $A_0 $ and $ A ^\perp$, where $A_0$ annihilates   the tangent hyperplane 
$T  \Mm^{ p_1+ \cdots +p_k}_{+} (\Om_k, \mu_k) \subset T \Mm_{+} (\Om_k,\mu_k)$  and  $A^\perp = A- A_0$. Since the  Fisher metric is invariant under the Markov  congruent embeddings,   each component $A_0$ and $A ^ \perp$  is also invariant  under the  Markov   congruent embeddings. Taking into account the  first assertion (1)  of
Proposition \ref{chentsovcampbell}, it  follows that $A_0(V)= c(\sum_{i =1}^k p_i  (x))\cdot \sum_{i =1} ^k p_i \p_V\ln p_i $  for some   continuous function $c$. By    the
second  assertion of  Proposition \ref{chentsovcampbell}  the component $A^\perp$ vanishes.  This proves the  the first assertion (1) of  Corollary \ref{cor:chentsovcampbell}.

(2) Using the same argument, i.e.  decomposing $F$ into three  orthogonal  components, we obtain the second assertion of Corollary \ref{cor:chentsovcampbell}    from the 
third assertion of  Proposition \ref{chentsovcampbell} and the  first assertion  of  Corollary \ref{cor:chentsovcampbell}.

(3) The last assertion of Proposition \ref{chentsovcampbell}  is obtained  from its particular case for statistical models  (the last assertion of Propositio \ref{chentsovcampbell})  and   from  the  first  and the second  assertion  of Corollary  \ref{cor:chentsovcampbell}.
\end{proof}


Our proof of the Main Theorem (Theorem \ref{main}) is based on the following  main observation. 
For each step function $\tau$ on $(\Om, \mu)$  subject  to  a statistic $\kappa : (\Om , \mu)\to \Om_n: =\{ E_1, \cdots, E_n\}$ (Definition \ref{step})
there exists a parametrized measure model $(M, \Om, \mu, p)$  and a vector
$V\in T_xM$ such that $p(x) = \mu $ and $\p_V \ln \bar p = \tau$, moreover, $\kappa$ is sufficient with respect to the  parameter $x\in M$  (Lemma \ref{lem:exi}). Thus,  the computation of any pointwise continuous covariant $k$-tensor  field on $\Mm(\Om)$, whose induced $k$-tensor field on parametrized measure models  is invariant under sufficient statistics, is reduced to  the case
$\Om = \Om _n$, which has been  considered by Chentsov for $k = 1,2,3$.



\begin{definition}\label{step} (cf. Example  \ref{ex:suff1}) Let $(\Om, \mu)$ be a finite measure space  and let $\bar \kappa$ be
a decomposition $\Om = D_1 \dot \cup \ldots \dot \cup D_n$ where $D_i$ is measurable.
Denote by $\kappa$ the associated statistic $\Om \to \Om_n, \, \kappa (D_i): = E_i$. 
A function $\tau: \Om \to \R$ is called {\it a step function subject to $\kappa$}, if  $\tau( \om) = \tau_i \cdot \chi_{D_i}(\om)$, where $\tau_i \in \R$  and $\chi _{D_i}$ is the characteristic   function of $D_i$.
\end{definition}

\begin{lemma}\label{lem:exi} 
Let $M = (0,1)$ 
and  $\Om$ be a smooth manifold.
Given a finite measure $\mu \in \Mm(\Omega)$,  a point $x_0 \in M$, and a 
step function $\tau: = \sum _i \tau_i \chi_{D_i}$ on $\Om$  subject to a 
statistic $\kappa: (\Om, \mu) \to \Om_n$, there exist a $k$-integrable 
parametrized measure model $(M, \Om, \mu, p)$ and $V \in T_{x_0} M$ 
such that
\begin{enumerate}
\item $\kappa$ is  sufficient   for the parameter in $M$,
\item $p(x_0) = \mu$,
\item $\p_V \ln \bar p =\sum_i \tau_i \chi_{D_i}$.
\end{enumerate}
\end{lemma}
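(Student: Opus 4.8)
The plan is to build $p$ explicitly so that the density varies with $x$ only through the coarse--graining $\kappa$, keeping the conditional structure frozen; sufficiency will then be immediate from the Fisher--Neyman criterion. Writing $\tau(\om) = \sum_i \tau_i \chi_{D_i}(\om)$ and using the standard coordinate on $M = (0,1)$, I would set
\[
\bar p(x, \om) := e^{\tau(\om)(x - x_0)}, \qquad p(x) := \bar p(x, \cdot)\,\mu .
\]
Because $\tau$ takes only the finitely many values $\tau_1, \dots, \tau_n$ and $|x - x_0| < 1$ on $M$, the density $\bar p(x, \cdot)$ is bounded between two positive constants; since $\mu$ is finite this gives $\bar p(x, \cdot) \in L^\infty(\Om, \mu) \subset L^1(\Om, \mu)$ with $\bar p(x,\cdot) > 0$, so $p(x)$ is equivalent to $\mu$ and $p$ is a well--defined map $M \to \Mm_+(\Om, \mu)$.

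Next I would check properties (2) and (3), which are built into the construction. Since the exponent vanishes at $x = x_0$, we get $p(x_0) = e^0\,\mu = \mu$. Taking $V = \p_x|_{x_0} \in T_{x_0}M$ and noting that for fixed $\om$ the map $x \mapsto \ln \bar p(x, \om) = \tau(\om)(x - x_0)$ is affine, the G\^ateaux differential is $\p_V \ln \bar p(x, \om) = \tau(\om)\cdot V$, so at $x_0$ with $V = \p_x$ we obtain $\p_V \ln \bar p(x_0, \cdot) = \tau = \sum_i \tau_i \chi_{D_i}$, which is exactly (3). For sufficiency (property (1)) I would verify the factorization of Definition \ref{suff} directly: as $\tau$ is a step function subject to $\kappa$, its value depends on $\om$ only through $\kappa(\om)$, so $\bar p(x, \om) = s(x, \kappa(\om))\,t(\om)$ with $s(x, E_i) := e^{\tau_i(x - x_0)}$ and $t \equiv 1$; here $t \in L^1(\Om, \mu)$ and $\int s(x, \cdot)\,d\kappa_*(\mu) = \sum_i e^{\tau_i(x-x_0)}\mu(D_i) < \infty$. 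Equivalently, via Lemma \ref{fisherneyman} one computes $\kappa_*(\bar p)(x, E_i) = e^{\tau_i(x - x_0)}$, whence the ratio $r(x, \om) \equiv 1$ is independent of $x$.

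It then remains to confirm that $(M, \Om, \mu, p)$ is $k$--integrable in the sense of Definition \ref{def:gen} for every $k \ge 1$. Condition (1) holds because $\ln \bar p(x, \om)$ is affine in $x$, hence smooth, with differential $\tau(\om)V$ that is linear in $V$ and jointly continuous. For condition (2), any continuous vector field is $V(x) = v(x)\p_x$, and
\[
\bigl\| \p_V \ln \bar p \bigr\|_{L^k(\Om, p(x))}^k = |v(x)|^k \sum_i |\tau_i|^k\, e^{\tau_i(x - x_0)}\,\mu(D_i),
\]
a finite sum of continuous functions of $x$, so the norm is finite and continuous. The only point requiring care—and the reason the lemma is stated for step functions rather than arbitrary $L^k$ directions—is precisely keeping $p(M)$ inside $\Mm_+(\Om, \mu)$ while preserving $k$--integrability; this is guaranteed exactly by the boundedness of $\tau$ together with the finiteness of $\mu$ and the bound $|x - x_0| < 1$ supplied by $M = (0,1)$. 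There is no deeper obstacle, and these observations complete the verification of all three assertions.
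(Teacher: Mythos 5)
Your construction is exactly the one the paper uses: the explicit density $\bar p(x,\om)=e^{\tau(\om)(x-x_0)}$ corresponds precisely to the paper's choice $s_i(x)=(x-x_0)\tau_i$, $t\equiv 1$, $V=\p_x$ in the Fisher--Neyman factorization, and your verifications of sufficiency, of conditions (2) and (3), and of $k$-integrability match the paper's (with somewhat more explicit detail on the $L^k$-norm computation). The proposal is correct and takes essentially the same route.
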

\begin{proof}
Note that $\kappa$ is a sufficient statistic for a $k$-integrable parametrized measure model 
$(M, \Om, \mu, p)$ iff $p$ is given as in  Definition \ref{suff}, i.e.
\[
\ln \bar p(x, \omega) = \ln \bar p (x, \kappa(\om)) + \ln  t(\omega)= \sum_{i =1}^n s_i(x) \chi_{D_i}(\omega) + \ln  t(\omega)
\]
for smooth functions $s_i: M \to \R$  and $t \in L^1 (\Om)$.  For such $\ln \bar p (x, \om)$ the conditions (2) and (3)   are equivalent to the following
\begin{itemize}
\item $\sum_{i =1}^n s_i(x_0) \chi_{D_i}(\omega) + \ln  t(\omega)= 0$,
\item $\sum_{i =1}^n \p_V s_i(x_0) \chi_{D_i}(\omega) = \sum_i \tau_i \chi_{D_i}(\om)$.
\end{itemize}
Set $t(\om) = 1$. The existence of  functions $s_i (s)$ satisfying the  listed conditions is obvious:  it suffices to choose smooth 
$s_i $ such that $s_i (x_0) = 0$ and $\p_V s_i (x) = \tau_i$. In
  fact, we can simply take $V=\p_x$ and $s_i(x) = (x-x_0) \tau_i $. Finally,
one verifies  that the defined   parametrized  measure model is
$k$-integrable, since the $s_i$ 
are smooth.
\end{proof}

\begin{proof}[Proof of the Main Theorem]  1. Let $A$ be a  pointwise
  continuous  1-tensor field on $\Mm(\Om)$  satisfying the condition
  (1) in the Main Theorem. To  prove  the first assertion of the Main
  Theorem,  it suffices to  assume that $V$ is a step function $\sum_i
  \tau_i \chi_{D_i}$ (using again the identification between the
    tangent vector $V$ and $\p_V \ln \bar p$) subject to a  statistic $\kappa:\Om \to \Om_n$.   By Lemma \ref{lem:exi}  there exists  a $k$-integrable  parametrized measure model $(M, \Om, \mu, p)$
such that 
\begin{enumerate}
\item $\bar p (x, \om) = e ^ {s_i(x)\chi_{D_i}}$ , where $s_i \in C^\infty (M)$, hence $\kappa$ is sufficient for the parameter $x\in M$,
\item $p(x_0) = \mu$,
\item  $\p_V \ln \bar p(x, \om) = \sum_i \tau_i \chi_{D_i}$.
\end{enumerate}
Set 
$$d_i: = \int_{D_i}d\mu.$$
Then $\kappa_*(\mu)  = d_i E_i ^*$, where $E_i^*$ is the Dirac measure concentrated  at $E_i$. Since $A$ is  associated with a statistical field which is invariant under $\kappa _*$ we have
\begin{equation}
A_\mu (\tau) = (A_{\kappa_*(\mu)}(\p_V( \ln  \kappa_*(\bar p))) = A _{ (d_1, \cdots , d_n) } (\tau_1, \cdots, \tau_n)= c (\sum _{i=1}^n d_i)\sum _{i=1} ^n d_i \tau_i,\label{for:A}  
\end{equation}
where $c$  is  the function  defined in Proposition \ref{chentsovcampbell}.2.  Note that 
$$\sum _i d_i = \int_\Om  d\mu, $$
$$\sum_i d_i\tau_i =\sum _i (\int _{D_i} \tau_id\mu) =  \int _\Om  \tau d\mu.$$
This  proves the first assertion in the Main theorem. The next assertions of the Main theorem concerning  specification of  the covariant 1-tensor field $A$ follows immediately.

2. Now assume that $F$ is a pointwise continuous quadratic form on $\Mm(\Om)$ and
$\mu$ is a finite measure.  To prove the second assertion of the Main Theorem   we follow the same line of arguments as above.   It suffices to  prove the validity of the second assertion   for a step function $\tau$   on $\Om$, since $F$ is a quadratic form (otherwise  we have to consider  step functions subjected to different statistics).
We deduce the second assertion of the Main Theorem from  Proposition \ref{chentsovcampbell}.2  using the observation that the Fisher metric on $\Mm(\Om)$ applied to $\tau$
$$g ^F _\mu (\tau) = \int _\Om \tau ^2 d\mu=\sum _{i =1}^n d_i \tau_i ^2$$
is equal to the Fisher metric applied to $\kappa_*(\tau) =  (\tau_1,\cdots, \tau_n)$
$$ g ^ F _{ (d_1, \cdots, d_n)} ([\tau_1,\cdots, \tau_n]) = \sum _{i =1} ^n d_i \tau_i ^2 .$$

3. The last assertion  of the Main Theorem is proven in the same way.   It follows from
Proposition \ref{chentsovcampbell}.2  using the observation that the Amari-Chentsov 3-symmetric tensor on $\Mm(\Om)$ applied to $\tau$
$$T ^{AC} _\mu (\tau) = \int _\Om \tau ^3d\mu=\sum _{i =1}^n d_i \tau_i ^3$$
is equal to the Amari-Chentsov tensor applied to $\kappa_*(\tau) =  (\tau_1,\cdots, \tau_n)$
$$ T ^{AC} _{ (d_1, \cdots, d_n)} ([\tau_1,\cdots, \tau_n]) = \sum _{i =1} ^n d_i \tau_i ^3 .$$

To complete  the proof  of the Main  Theorem  we need to show that
\begin{enumerate}
\item  all the tensor  fields  described in the  Main Theorem  are 
weakly  continuous  on $n$-integrable  parametrized measure models,
\item  the tensor  field $A$ is invariant under  sufficient statistics.
\end{enumerate}
Note that (1)  holds since  the value  $A (V)$ (resp. $F(V)$, $T(V)$) of a tensor field $A$  (resp. $F$, $T$) in the  Main Theorem  at  a continuous  vector field $V$ on $M$ is an  algebraic  function  whose arguments are    tensor fields  of the following forms: $(x, V) \mapsto   c(\int_\Om dp(x))$,
$(x, V)  \mapsto \int _\Om (\p_V\ln \bar p ) ^ k \, d p(x)$, $ k = 1$ (resp $k =2, 3$), which are  continuous 
by the condition (2) of Definition \ref{def:gen}.  
\\
The proof of (2) is similar  to  the proof of Theorem \ref{th:inv}, 
observing that
$$\p _V p(x) = \p_V \ln \bar p(x) \bar p(x)\mu$$ for
$p(x)  = \bar p (x) \mu$ (cf. Remark  \ref{rem:gen1}),  and hence  omitted.
\end{proof}

\begin{remark}\label{rem:ext2}    It is not hard  to prove  a version of the Main Theorem for    local  continuous   statistical covariant tensor fields  on   statistical  models that are
invariant under sufficient  statistics, which is a direct  generalization  of the  Chentsov  theorem \cite{Chentsov1978}, see  its formulation  in Proposition  \ref{chentsovcampbell}.   In particular, it implies the uniqueness  of the  Amari-Chentsov  connections  among   those  affine  connections
on  statistical models  that are invariant  under sufficient  statistics, see also Remark \ref{rem:hisac}. All the  arguments  for the   proof  of  the Main Theorem    also holds  for  this ``statistical" version, since  the image of a statistical  model
under  a  sufficient  statistic is also a statistical model.
\end{remark}

\section{Markov morphisms and sufficient statistics} \label{section-markov}

In this section we introduce the notions of a Markov morphism, a $\mu$-representable Markov morphism,  and a restricted Markov morphism (Definitions \ref{def:markov}, \ref{de:rep},  \ref{equi})  extending the Chentsov   notion of a Markov morphism \cite{Chentsov1965}, and
the notion of a statistical morphism   introduced independently by Morse and Sacksteder in \cite{MS1966}.
These notions are needed for  comparing two statistical models; they stem from the Blackwell concept  of ``comparison of experiments" in \cite{Blackwell1953}.   A novel aspect is  our consideration of a parametrization of the parameter space $M$ of a parametrized measure model 
$(M, \Om, \mu, p)$  as a restricted Markov  morphism (Definition \ref{equi}, 
Example \ref{imstat}). Thus, the geometry of parametrized measure models is intrinsic (Example \ref{imstat}).
We decompose   a Markov morphism  associated with a (positive) Markov transitition kernel as a composition of  a right inverse of a sufficient statistic and a statistic (Theorem \ref{decomp1}).
As a consequence we give a geometric proof of the monotonicity   theory for Markov morphisms  (Corollary \ref{co:monoton}). 
\

{\bf Positivity assumption}. In this section, for the simplicity of the  exposition of the theory, 
when considering Markov transition kernels we restrict ourselves to positive ones.

\ 

\begin{definition}\label{def:markov} (\cite[p. 194]{Chentsov1965}, \cite[p. 205]{MS1966}) 
 {\it A Markov transition from  a measurable space $(\Om, \Sigma)$ to  a measurable space $(\Om ', \Sigma')$}
is a  map  $T: \Om \to \Pp(\Om ', \Sigma')$ such that for each  $S \in \Sigma '$  the function $\int_S d(T(x))$ is a $\Sigma$-measurable  function.
A  Markov transition  $T: \Om \to \Pp(\Om', \Sigma')$ defines  {\it a Markov morphism} 
$T_*: \Mm(\Om, \Sigma) \to \Mm (\Om', \Sigma')$  by
\begin{equation}
T_*( \nu)(S) : = \int_\Om \int _Sd(T (\om)) d\nu \label{for:markov1} 
\end{equation}
for  $S \in \Sigma'$. 
\end{definition}

Since $ T(\Om) \subset \Pp (\Om',\Sigma')$, substituting $S : = \Om'$ in (\ref{for:markov1}), we obtain 
$$T_*( \Mm^a(\Om, \Sigma)) \subset \Mm^a(\Om', \Sigma')$$
for all $a \in \R^+$.  

Next, we assume that $T(\om)$ is   dominated by   a probability  measure $\mu' \in \Pp(\Om', \Sigma')$. Then  there exists a measurable function 
$\Pi _\om : \Om' \to \R$ such that  for all $S \in \Sigma '$ we have
\begin{equation}
T(\om) (S) = \int_S \Pi_{\om}(\om')d\mu'.\label{for:markov2}
\end{equation}
If $ T(\Om) \subset  \Pp(\Om', \mu')$, by (\ref{for:markov2}), there  exists {\it a Markov transition kernel  $\Pi: \Om \times \Om' \to \R$ from  
$\Om$ to $\Mm(\Om', \mu')$} such that
\begin{equation}
T(\om) (S) = \int _S \Pi (\om, \om') d\mu'. \label{for:markovkernel} 
\end{equation}

\begin{definition}\label{de:rep} If (\ref{for:markovkernel}) holds,  $T(\Pi) : = T$ is called  {\it a $\mu'$-representable Markov transition}, 
and $T(\Pi)_*$ is called {\it a $\mu'$-representable Markov morphism}.
\end{definition}

Note that  any Markov transition kernel $\Pi: \Om \times \Om'\to \R$
from $\Om$ to $ \Pp(\Om', \mu')$ 
satisfies
\begin{eqnarray}
   \Pi (\om, \om') \ge 0 \text { for all }  (\om, \om') \in \Om \times \Om',\label{im1} \\
   \int_{\Om'}\Pi (\om, \om')  d\mu'  = 1 \text { for all }  \om\in \Om.\label{im2}
\end{eqnarray} 
 Abbreviate $T(\Pi)_*$ as $\Pi_*$. For any measure
$\nu\in \Mm(\Om)$  and $S \in \Sigma '$ we have 
\begin{equation}
\Pi_* (\nu)(S) = \int_{\Om}\int_S \Pi (\om, \om')d\mu '\,  d\nu.\label{im3}
\end{equation}
It follows
\begin{equation}
\frac{d\Pi_* (\nu)}{d\mu'}(\om') = \int_{\Om} \Pi (\om, \om') d\nu.\label{im3a}
\end{equation}


If  $\Om,\Om'$  are finite sets, then any   Markov morphism $T : \Mm(\Om, \Sigma) \to \Mm (\Om ', \Sigma ')$ is $\mu$-representable   for any  dominant measure $\mu$ on $\Om '$, see   Example \ref{ex1}. This   is not true, if
$\Om, \Om '$ are open domains in $\R^n$, $n \ge 1$, see  the following
 
\begin{example}\label{ex:markov1}  1. (cf. \cite[p. 511]{Chentsov1965})   Let $(\Om, \Sigma)$  be   a  measurable  space. We define  a Markov transition $T^{Id}$ on
$(\Om, \Sigma)$  by setting
$$T ^{Id}(\om) (A) : = \chi_A (\om) \text{ for }  \om \in \Om,$$
where $\chi_A$ is the indicator function of $A\in \Sigma$.  Clearly
$T^{Id}_*$ defines  a Markov  morphism  which is  the identity
transformation of $\Pp  (\Om, \Sigma)$.
Note that    
$T^{Id}_*$  is not a $\mu$-representable Markov morphism for any measure $\mu\in \Mm(\Om, \Sigma)$, if $\Om$ is an open domain in $\R ^n$ with Borel $\sigma$-algebra $\Sigma$,  and $n \ge 1$.  To see this,  we note that
if $\mu$  dominates  all the measures $T^{Id} (\om), \om \in \Om$, then   $\mu$ has no null set, in particular  $\mu ( \{ \om \} ) > 0$  for all  
$\om\in \Om$. It is  easy to see that this is impossible,  since $\dim \Om \ge 1$.

2. Assume that $\kappa: \Om_1\to \Om_2$  is a statistic. Then $\kappa$ defines a Markov transition 
$T^\kappa$ from $(\Om_1, \Sigma_1)$ to $(\Om_2, \Sigma_2)$ by  setting
\begin{equation}
T ^\kappa (\om_1)  (A) : = \chi _A( \kappa(\om_1)) \text { for } \om_1 \in \Om_1
\end{equation}
and $A \in \Sigma_2$.   For $\nu \in \Mm (\Om_1)$  and $S \in \Sigma _2$, using (\ref{for:markov1}), we get
$$T^\kappa _* (\nu)(S) = \int_{\Om_1} \int_S d\chi_A(\kappa(\om_1)) d\nu = \int_{\kappa ^{-1}( S)}d\nu.$$
Hence $T^\kappa _* = \kappa _*$. 
Then $T^{\kappa}_*$ is not
a $\mu_2$-representable  Markov morphism for any $\mu_2 \in \Mm(\Om_2)$, if for instance $\kappa(\Om_1)$ and $\Om_2$ are open domains in 
$\R^n$, $n \ge 1$, since  there exists $\nu\in \Mm(\Om_1)$ such that $\kappa_*(\nu)$ is not dominated by $\mu_2$.   
\end{example}
\medskip

Denote by $C^1(M_1, M_2)$  the  space of all differentiable maps  from  a differentiable manifold $M_1$ to a differentiable manifold $M_2$. Let $(\Om_1, \Sigma_1)$ and  $(\Om_2, \Sigma_2)$ be  measurable spaces.  Denote by ${\mathfrak M}(\Om_1, \Om_2)$ the set of all Markov
morphisms  from  $\Mm(\Om_1)$ to  $\Mm(\Om_2)$.

\begin{definition}\label{equi}   Assume that   $(M_1, \Om_1, \mu_1,
  p_1)$ and $(M_2, \Om_2, \mu_2, p_2)$ are parametrized measure models.
A pair $(f\in  C^1 (M_1, M_2), T \in  {\mathfrak M} (\Om_1, \Om_2))$ is called
{\it a  restricted Markov morphism}, if   for all $x\in M$
\begin{equation}
p_2 (f (x)) = T_* (p_1 (x)).\label{eq:mm}
\end{equation}
\end{definition}


\begin{example}\label{imstat} 1. Assume that $(M, \Om_1, \mu_1, p_1)$
  is a  parametrized measure model and 
$\kappa : \Om_1\to \Om_2$  is a statistic.  Then $(M, \Om_2, \kappa_*(\mu_1), \kappa_* (p_1))$  is a  parametrized measure model.  
By Example \ref{ex:markov1}.2 the pair $(Id, \kappa_*)$ is a Markov morphism. We  also call $(Id, \kappa_*)$ {\it  a  statistic}, if no misunderstanding occurs.

2. Assume that  $(M_2, \Om_2, \mu_2, p_2)$ is a parametrized measure model and  $f: M_1\to M_2$ is a smooth map. Then 
$(M_1, \Om_2, \mu_2,  p_1: = p_2 \circ f)$ is a parametrized measure model and
the pair $(f, Id)$ is a Markov morphism. Such a Markov morphism  is
called {\it generated by a smooth map $f$}. It is easy to see that, if
$f$ is a  differentiable  map, then the Amari-Chentsov structure on $
  M_1$ is obtained from the Amari-Chentsov structure  on $M_2$ via the pull-back map $f^*$. 
\end{example}

\begin{example}\label{ex1}  Let $(\Om_n, \mu_n)$  and $(\Om_m, \mu_m)$ be the  measure spaces in Example \ref{ex:suff1}. Let $\Pi: \Om_n  \times \Om _m \to \R$  be  a mapping  such that
$\Pi_{i,j} : =  \Pi (E_i, F_j)$ satisfies  the  following conditions 
\begin{align}
  \Pi_{i,j} \ge 0 \text{ for all }  1\le i \le n, \, 1\le j \le m,\nonumber\\
\sum_{j=1} ^{m} \Pi_{i,j }= 1 \text{ for all }  1\le i \le n.
\end{align}
Clearly, $\Pi$ is a Markov transition kernel from $\Om _n$ to $\Mm(\Om_m, \mu_m)$. By (\ref{im3})   $\Pi$ induces  a map 
$$\Pi_*: \R^n _{\ge 0} = \Mm(\Om_n, \mu_n) \to \Mm(\Om_m, \mu_m) = \R^m_{\ge 0}, $$
\begin{equation} 
 \Pi_*(E^*_k) (F_j): = \sum_{i = 1} ^{n} \Pi_{i,j} E^*_k(E_i) = \Pi_{kj}.\label{im3f}
\end{equation}
Hence
\begin{equation}
\Pi_* (E^*_k)  = \sum_{j =1}^m\Pi_{kj}F^*_j. \label{for:im3}
\end{equation}
Let 
$$(M_1: = \Pp_+ (\Om_n, \mu_n), \Om_n, \mu_n, p_1(x) : = {x}),  $$
$$  (M_2 : = \Pp_+ (\Om_m, \mu_m), \Om_m, \mu_m, p_1(y): = {y} )$$
be  statistical models. By (\ref{eq:mm}), a pair
$(f \in \Di (M_1, M_2), \Pi \in {\mathfrak M}(\Om_n,\Om_m))$  is a Markov morphism, if and only  if  for all $x\in  M_1$
\begin{equation}
f(x)(F_j) = \Pi_* (x) (F_j)\text{ for all }  1\le j \le m.\label{eq:mark}
\end{equation}
Thus  for  $\Pi \in {\mathfrak M}(\Om_n,\Om_m) $ the pair
$ (f, \Pi)$ is a Markov morphism  if and only if $f = \Pi_*|_{ M_1}$.  We also abbreviate 
$(\Pi_*|_{M_1}, \Pi)$ as $\Pi$ if no misunderstanding occurs.

Next  we  drop the assumption that $n \le m$.  Note that  there  is a canonical map 
$$\chi_n: \Om_n  \to  \Mm (\Om_n, \mu_n),  E_i \mapsto E^* _i.$$
Let  $\kappa: \Om_n\to \Om_m$  be a statistic.  The composition $\chi_m \circ  \kappa: \Om_n \to \Mm(\Om_m, \mu_m)$  defines the following
map $\Pi^\kappa : \Om_n \times \Om_m \to \R$ 
\begin{equation}
\Pi^\kappa (E_i, F_j): = \la \chi_m \circ \kappa (E_i),  F_j\ra
\end{equation}
Clearly  $\sum_{j =1} ^m\Pi^\kappa (E_i, F_j) = 1$  for all $i$. Hence  $\Pi^\kappa$ is a Markov  transition  kernel.  Note that 
$\Pi^\kappa_*:  \Mm(\Om_n, \mu_n) \to \Mm(\Om_m, \mu_m)$ 
coincides with the push-forward map $\kappa_*: \Mm(\Om_n, \mu_n)\to \Mm(\Om_m, \mu_m)$.

\begin{proposition}\label{pro:immark} A linear mapping $\Pi :  \R ^n \to \R ^m   $  is  a Markov congruent  embedding subjected to  a statistic $ \kappa$, if and only 
if  $\Pi^ \kappa_* \circ \Pi   (x) = x $ for all $x \in \R ^n _{\ge 0}$.
A Markov mapping $\Pi: \R^n \to \R^m$ has a  left inverse if and only if it is a  Markov congruent embedding.
 \end{proposition}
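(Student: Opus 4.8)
The plan is to reduce everything to the explicit matrix $[\Pi_{kj}]$ of the Markov mapping together with the explicit matrix of $\Pi^\kappa_*=\kappa_*$, exploiting the sign constraints $\Pi_{kj}\ge 0$ and the row normalization $\sum_j\Pi_{kj}=1$. Recall from Example \ref{ex1} that the push-forward $\kappa_*$ sends $F_j^*$ to $E_{\kappa(F_j)}^*$, so that $\kappa_*\circ\Pi$ is represented by the $n\times n$ matrix with entries $(\Pi\kappa_*)_{kl}=\sum_{j\in\kappa^{-1}(E_l)}\Pi_{kj}$. Since the cone $\R^n_{\ge 0}$ contains the basis $\{E_k^*\}$, the requirement $\Pi^\kappa_*\circ\Pi(x)=x$ for all $x\in\R^n_{\ge 0}$ is equivalent to the matrix identity $\Pi\kappa_*=\mathrm{Id}$. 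Throughout I read ``left inverse'' in the category of Markov mappings, i.e.\ as a Markov mapping $\Psi:\R^m\to\R^n$ with $\Pi\Psi=\mathrm{Id}$; this is exactly the form of $\Pi^\kappa_*$ arising in the first assertion.

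For the first assertion I would argue both implications directly from this formula. If $\Pi$ is a Markov congruent embedding subjected to $\kappa$, then $\Pi_{kj}=0$ whenever $F_j\notin\kappa^{-1}(E_k)$; hence $(\Pi\kappa_*)_{kl}$ vanishes for $l\neq k$ by disjointness of the fibers, while for $l=k$ it equals $\sum_j\Pi_{kj}=1$, so $\kappa_*\circ\Pi=\mathrm{Id}$. Conversely, from $(\Pi\kappa_*)_{kl}=\delta_{kl}$ I would invoke the row-sum identity $\sum_l(\Pi\kappa_*)_{kl}=\sum_j\Pi_{kj}=1$ to conclude that the nonnegative off-diagonal entries $(\Pi\kappa_*)_{kl}$ with $l\neq k$ all vanish; as each is a sum of nonnegative terms $\Pi_{kj}$ over $j\in\kappa^{-1}(E_l)$, this forces $\Pi_{kj}=0$ for every $j\notin\kappa^{-1}(E_k)$, which is the first defining condition of a congruent embedding. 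The second condition $\Pi(E_k^*)\neq 0$ is automatic, since each row of a Markov mapping sums to $1$.

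For the second assertion the backward implication is immediate: if $\Pi$ is a congruent embedding subjected to some $\kappa$, the first assertion exhibits the Markov mapping $\kappa_*$ as a left inverse. The forward implication is the heart of the matter. Assume $\Pi$ admits a Markov left inverse $\Psi=[\Psi_{jl}]$, so $\sum_j\Pi_{kj}\Psi_{jl}=\delta_{kl}$. For $k\neq l$ every summand is nonnegative and the sum is zero, hence $\Pi_{kj}\Psi_{jl}=0$ for all $j$. Fixing a column index $j$, I would observe: if $\Pi_{kj}>0$ for some $k$, then $\Psi_{jl}=0$ for all $l\neq k$, and since $\sum_l\Psi_{jl}=1$ this gives $\Psi_{jk}=1$; consequently at most one $k$ can have $\Pi_{kj}>0$, for two such indices $k\neq k'$ would force $\Psi_{jk}=\Psi_{jk'}=1$. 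Thus every column of $\Pi$ carries at most one positive entry, which lets me define a statistic $\kappa:\Om_m\to\Om_n$ by $\kappa(F_j):=E_{k(j)}$ when column $j$ has its single positive entry in row $k(j)$, and by any fixed value on the columns of $\Pi$ that vanish identically. By construction $\Pi_{kj}=0$ for $j\notin\kappa^{-1}(E_k)$, and each row being nonzero guarantees both $\Pi(E_k^*)\neq 0$ and that every fiber $\kappa^{-1}(E_k)$ is nonempty; hence $\Pi$ is a congruent embedding subjected to $\kappa$.

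The step I expect to be the main obstacle is this extraction of the partition in the forward direction of the second assertion: the purely algebraic relation $\Pi\Psi=\mathrm{Id}$ does not by itself constrain the support of $\Pi$, and it is precisely the simultaneous non-negativity and row normalization of both $\Pi$ and $\Psi$ that collapses each column of $\Pi$ to a single nonzero entry. I would take care to handle separately the identically vanishing columns of $\Pi$, which correspond to points of $\Om_m$ outside the image support and may be assigned to an arbitrary fiber, and to verify that the resulting fibers $\kappa^{-1}(E_k)$ are all nonempty, so that $\kappa$ is a genuine partition of $\Om_m$ and $n\le m$, as required by the definition in Example \ref{ex:suff1}.
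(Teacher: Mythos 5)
Your argument is correct and complete, and it is genuinely more than what the paper supplies: the paper's own ``proof'' consists of declaring the first assertion obvious and referring the second to Chentsov's Lemmas 6.1 and 9.5, whose content is precisely the combinatorial fact you establish from scratch. Your reduction of both assertions to the matrix identities $\sum_{j\in\kappa^{-1}(E_l)}\Pi_{kj}=\delta_{kl}$ and $\sum_j\Pi_{kj}\Psi_{jl}=\delta_{kl}$ is the right move, and the step you correctly single out as the heart of the matter --- that nonnegativity together with the row normalizations of both $\Pi$ and a Markov left inverse $\Psi$ force each column of $\Pi$ to carry at most one positive entry, from which the partition is read off --- is exactly the substance of Chentsov's lemma. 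Two points of care that you handle correctly and that are worth keeping explicit: (i) ``left inverse'' must indeed be taken within Markov mappings, since every injective linear map has a linear left inverse and the assertion would otherwise be false (e.g.\ $\Pi(E_1^*)=\frac12 F_1^*+\frac12 F_2^*$, $\Pi(E_2^*)=F_1^*$ is an injective Markov mapping with a linear but no Markov left inverse, and it is not congruent); and (ii) the identically vanishing columns of $\Pi$ may be assigned to an arbitrary fiber, which is consistent with the definition in Example \ref{ex:suff1} because the support condition there is only the implication $F_j\notin\kappa^{-1}(E_i)\Rightarrow\Pi(E_i^*)(F_j)=0$, not its converse, while the nonvanishing of the rows guarantees that every fiber of your $\kappa$ is nonempty. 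What your approach buys over the paper's is a self-contained, purely elementary verification; what the citation buys is brevity and the link to Chentsov's original formulation.
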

 The first assertion of Proposition  \ref{pro:immark} is  obvious. The  second assertion of Proposition \ref{pro:immark} is a reformulation of \cite[Lemma 6.1, p. 77 and Lemma 9.5, p.136]{Chentsov1982}.  
\end{example}

 Let  $(M, \Om_1,\mu_1, p_1)$ be a parametrized measure model, 
 $(\Om_2, \mu_2)$ a probability space and $\Pi:\Om_1 \times \Om_2 \to \R$  a Markov transition kernel  from $\Om_1$ to
 $\Pp(\Om_2, \mu_2)$. 
 We define a function $\Pi ^{[p_1]}: M \times \Om_1 \times \Om_2 \to \R$ by setting:
\begin{equation}
\Pi^{[p_1]}  (x, \om_1, \om_2) : = \Pi(\om_1, \om_2) \bar{p}_1(x, \om_1). \label{pila1}
\end{equation}
Using (\ref{im2}),  we get  for all $x \in M$  and  any measurable  set $S\subset \Om_1$
\begin{equation}
\int_{S \times \Om_2} \Pi ^{[p_1]}  (x, \om_1, \om_2)\mu_1 \mu_2 = \int_{S} \bar{p}_1(x, \om_1) d\mu_1.\label{for:markov4}
\end{equation}

\begin{lemma}\label{lah1}  Then $ (M, \Om_1 \times \Om_2, \mu_1 \mu_2, \Pi^{[p_1]}(x, \om_1, \om_2) d\mu_1d\mu_2)$
is a parametrized  measure model.  Moreover, the Amari-Chentsov structure on $(M, \Om_1\times \Om_2,\mu_1\mu_2,\\
\Pi^{[p_1]}(x, \om_1, \om_2)d\mu_1d\mu_2)$ coincides
with the Amari-Chentsov structure on $(M, \Om_1, \mu_1,p_1)$.
\end{lemma}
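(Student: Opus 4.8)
The plan is to read off both assertions from a single structural observation: differentiation along the parameter annihilates the kernel factor, while the fiberwise normalization (\ref{im2}) makes the integration over $\Om_2$ trivial. First I would record the decisive identity. Taking logarithms in (\ref{pila1}) gives $\ln \Pi^{[p_1]}(x, \om_1, \om_2) = \ln \Pi(\om_1, \om_2) + \ln \bar p_1(x, \om_1)$, and since the kernel $\Pi(\om_1, \om_2)$ does not depend on $x$, differentiation along any tangent vector $V$ yields $\p_V \ln \Pi^{[p_1]}(x, \om_1, \om_2) = \p_V \ln \bar p_1(x, \om_1)$. Thus the pushforward of $V$ in the new model is exactly the pushforward in $(M, \Om_1, \mu_1, p_1)$, now regarded as a function on $\Om_1 \times \Om_2$ that is constant in the $\om_2$-variable. (Here $\ln \Pi^{[p_1]}$ is $\mu_1\mu_2$-a.e.\ defined, because the positivity assumption on $\Pi$ together with $\bar p_1 > 0$ gives $\Pi^{[p_1]}(x) > 0$.) In particular, condition (1) of Definition \ref{def:gen} --- a.e.\ continuous G\^ateaux-differentiability of $\ln \Pi^{[p_1]}$ with continuous dependence on $V \in TM$ and linearity in each $T_xM$ --- is inherited verbatim from the same property of $p_1$.

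Next I would verify condition (2), which is where the Markov normalization does the work. Since $\p_V \ln \Pi^{[p_1]}$ depends only on $\om_1$, Fubini's theorem (applicable as all integrands are nonnegative) together with (\ref{im2}) collapses the $\om_2$-integral:
\begin{align*}
\int_{\Om_1 \times \Om_2} |\p_V \ln \Pi^{[p_1]}(x)|^k \, \Pi^{[p_1]}(x) \, d\mu_1 d\mu_2
&= \int_{\Om_1} |\p_V \ln \bar p_1(x)|^k \bar p_1(x) \Bigl( \int_{\Om_2} \Pi(\om_1, \om_2) \, d\mu_2 \Bigr) d\mu_1 \\
&= \int_{\Om_1} |\p_V \ln \bar p_1(x)|^k \, dp_1(x).
\end{align*}
Hence the $L^k$-norm of the pushforward over $(\Om_1 \times \Om_2, \Pi^{[p_1]}(x)\mu_1\mu_2)$ equals the $L^k(\Om_1, p_1(x))$-norm of $\p_V \ln \bar p_1$, so both its finiteness and its continuity in $x$ follow from $k$-integrability of $(M, \Om_1, \mu_1, p_1)$; in particular the new model is $k$-integrable for exactly the same $k$. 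The same factorization with no logarithmic-derivative factor recovers (\ref{for:markov4}) for $S = \Om_1$, giving total mass $\int_{\Om_1} dp_1(x) < \infty$ and placing $p(x)$ in $\Mm_+(\Om_1\times\Om_2, \mu_1\mu_2)$; applied instead to $|\bar p_1(x) - \bar p_1(y)|$ it shows $\|\Pi^{[p_1]}(x) - \Pi^{[p_1]}(y)\|_{L^1} = \|\bar p_1(x) - \bar p_1(y)\|_{L^1(\mu_1)}$, so $x \mapsto \Pi^{[p_1]}(x)$ inherits $L^1$-continuity from $p_1$.

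For the second assertion I would insert the identity $\p_V \ln \Pi^{[p_1]} = \p_V \ln \bar p_1$ into the defining integrals (\ref{for:fisher}) and (\ref{for:amari}) and collapse the $\om_2$-integral by (\ref{im2}) exactly as above. For the Fisher form this gives
\begin{align*}
g^F_{\Pi^{[p_1]}}(V, W)_x
&= \int_{\Om_1 \times \Om_2} \p_V \ln \bar p_1(x) \, \p_W \ln \bar p_1(x) \, \Pi(\om_1, \om_2)\bar p_1(x) \, d\mu_1 d\mu_2 \\
&= \int_{\Om_1} \p_V \ln \bar p_1(x) \, \p_W \ln \bar p_1(x) \, dp_1(x) = g^F(V, W)_x,
\end{align*}
and the computation for $T^{AC}$ is identical with a third factor $\p_X \ln \bar p_1(x)$ inserted. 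This yields the claimed coincidence of the Amari-Chentsov structures.

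The argument presents essentially no obstacle: everything reduces to the two elementary facts that $\p_V$ kills $\ln \Pi$ and that $\Pi(\om_1, \cdot)$ integrates to $1$. The only points needing care are the a.e.\ definedness of $\ln \Pi^{[p_1]}$ (handled by positivity) and the repeated appeal to Fubini, which is legitimate throughout because the integrands are nonnegative. I expect the writing to be almost purely bookkeeping once the identity for the pushforward is in hand.
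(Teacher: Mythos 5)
Your proof is correct, and it is somewhat more self-contained than the paper's. The paper's own argument is shorter and runs through the sufficient-statistics machinery: comparing (\ref{pila1}) with the Fisher--Neyman factorization (\ref{suff1}), it observes that the projection $\pi_1:\Om_1\times\Om_2\to\Om_1$ is a sufficient statistic for the product model and that $(M,\Om_1,\mu_1,p_1)$ is its image under $(Id,(\pi_1)_*)$, whence the coincidence of the Amari--Chentsov structures follows from Lemma \ref{fisherneyman} and Theorem \ref{th:inv}. You instead unwind that invariance theorem in this special case: the identity $\p_V\ln\Pi^{[p_1]}=\p_V\ln\bar p_1$ together with the fiberwise normalization (\ref{im2}) collapses every $\om_2$-integral, which is exactly the computation hidden inside the proof of Theorem \ref{th:inv} once one notes that $\kappa_*(\bar p_1\,\Pi)=\bar p_1$ here. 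What your route buys is an explicit verification that the product quadruple satisfies conditions (1) and (2) of Definition \ref{def:gen} (including $L^1$-continuity of $x\mapsto\Pi^{[p_1]}(x)\mu_1\mu_2$ and the equality, not merely finiteness, of the $L^k$-norms), a point the paper's proof leaves implicit; what the paper's route buys is reusability --- the same observation that $\pi_1$ is sufficient is then cited again as Corollary \ref{lah2} and in the decomposition Theorem \ref{decomp1}, so deriving it once pays off later. Both arguments rest on the same two facts you isolate, and your appeals to Tonelli for the nonnegative integrands are legitimate.
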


\begin{proof}
Let $\pi_1: \Om_1 \times \Om_2 \to \Om_1$ be a projection. Since $\mu_2$ is a probability  measure, $(\pi_1)_*(\mu_1\mu_2) = \mu_1$. Comparing (\ref{suff1}) with (\ref{pila1}), we observe that $\pi_1$ is a sufficient statistic with respect to the parameter   $ x\in (M, \Om_1 \times \Om_2, \mu_1 \mu_2,\\
  \Pi^{[p]}(x,\om_1, \om_2)d\mu_1d\mu_2)$.  By (\ref{for:markov4}), the parametrized measure model $(M,\Om_1, \mu_1,p_1)$ is the image of  $(M, \Om_1 \times \Om_2, \mu_1 \mu_2, \Pi^{[p]}(x,\om_1, \om_2)d\mu_1d\mu_2)$  under the Markov morphism $(Id, (\pi_1)_*)$.
Combining this with Lemma  \ref{fisherneyman},  we obtain immediately  Lemma \ref{lah1}.
\end{proof}

We obtain immediately from the  proof of Lemma \ref{lah1}

\begin{corollary}\label{lah2} Let $(M, \Om_1, \mu_1, p_1)$ be a
  parametrized measure model  and $\mu_2$  a probability measure on $\Om_2$. The projection 
$\pi_1 : \Om_1 \times \Om_2\to \Om_1$ is a  sufficient statistic for  the  parametrized measure model 
$(M, \Om_1 \times \Om_2,\mu_1\mu_2, \Pi^{ [p]}(x,\om_1, \om_2) d\mu_1d\mu_2
)$. 
\end{corollary}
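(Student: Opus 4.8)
The plan is to verify the Fisher-Neyman factorization of Definition \ref{suff} for the statistic $\kappa = \pi_1$ applied to the parametrized measure model whose density potential is $\Pi^{[p_1]}$; this is exactly the observation already extracted inside the proof of Lemma \ref{lah1}, so the work consists in making the factorization and its accompanying hypotheses explicit. Here the source space $(\Om_1 \times \Om_2, \mu_1\mu_2)$ plays the role of the domain in Definition \ref{suff}, the projection $\pi_1$ plays the role of the statistic, and $\Pi^{[p_1]}(x, (\om_1, \om_2))$ plays the role of the density potential.

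First I would read the required factorization directly off the defining formula (\ref{pila1}),
$$\Pi^{[p_1]}(x, \om_1, \om_2) = \bar{p}_1(x, \om_1)\,\Pi(\om_1, \om_2),$$
and match it against (\ref{suff1}) by setting $s(x, \om_1) := \bar{p}_1(x, \om_1)$ and $t(\om_1, \om_2) := \Pi(\om_1, \om_2)$. Since $\pi_1(\om_1, \om_2) = \om_1$, the factor $s$ depends on $x$ and on the image point $\pi_1(\om_1, \om_2)$ only, while $t$ is independent of $x$, which is precisely the shape demanded by Definition \ref{suff}. For the integrability side conditions I would record that, because $\mu_2$ is a probability measure, $(\pi_1)_*(\mu_1\mu_2) = \mu_1$, so that $s(x, \cdot) = \bar{p}_1(x, \cdot) \in L^1(\Om_1, (\pi_1)_*(\mu_1\mu_2))$ holds by the very definition of the model $(M, \Om_1, \mu_1, p_1)$.

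The only genuinely delicate point is the integrability of the $x$-independent factor $t = \Pi$ against the product reference measure: by Fubini and (\ref{im2}) one has $\int_{\Om_1 \times \Om_2}\Pi\,d\mu_1 d\mu_2 = \int_{\Om_1}\left(\int_{\Om_2}\Pi(\om_1,\om_2)\,d\mu_2\right)d\mu_1 = \mu_1(\Om_1)$, which is finite only when $\mu_1$ is finite. To avoid imposing finiteness of the reference measure, I would instead route the argument through the characterization of Lemma \ref{fisherneyman}. Writing $q(x) := \Pi^{[p_1]}(x, \om_1, \om_2)\,d\mu_1 d\mu_2$, the same Fubini computation with (\ref{im2}) that appears in the proof of Lemma \ref{lah1} gives $(\pi_1)_*(q(x)) = p_1(x)$, hence $(\pi_1)_*(\Pi^{[p_1]})(x, \om_1) = \bar{p}_1(x, \om_1)$, so that the ratio
$$r(x, (\om_1, \om_2)) = \frac{\Pi^{[p_1]}(x, \om_1, \om_2)}{(\pi_1)_*(\Pi^{[p_1]})(x, \pi_1(\om_1,\om_2))} = \Pi(\om_1, \om_2)$$
does not depend on $x$. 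By Lemma \ref{fisherneyman} this yields the sufficiency of $\pi_1$ with no integrability hypothesis on $t$, and I expect this second route — a two-line reuse of the Fubini step already present in the proof of Lemma \ref{lah1} — to be the cleanest way to close the argument.
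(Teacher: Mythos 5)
Your proposal is correct and follows essentially the same route as the paper, which establishes the corollary inside the proof of Lemma \ref{lah1} by noting that $(\pi_1)_*(\mu_1\mu_2)=\mu_1$ (since $\mu_2$ is a probability measure) and directly comparing the factorization (\ref{pila1}) with (\ref{suff1}). Your extra observation that the $L^1$ requirement on the $x$-independent factor $t=\Pi$ only holds automatically when $\mu_1$ is finite, together with the fallback through Lemma \ref{fisherneyman}, tightens a point the paper passes over silently but does not change the substance of the argument.
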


Next, we consider  a decomposition of  a restricted Markov morphism.

\begin{theorem}\label{decomp1} Let $(Id, \Pi_*):(M_1, \Om_1, \mu_1, p_1) \to (M_1, \Om_2, \mu_2, p_2)$ be a restricted Markov morphism  between statistical  models, where 
$\Pi_*$ is $\mu_2$-representable by a positive Markov kernel. Then $(Id, \Pi_*)$ is a composition of the inverse of  a Markov morphism, associated with a  sufficient statistic, and a statistic.
\end{theorem}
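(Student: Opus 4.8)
The plan is to factor $(Id,\Pi_*)$ through the product sample space $\Om_1\times\Om_2$ using the intermediate model already constructed in Lemma \ref{lah1}. Since $\Pi_*$ is $\mu_2$-representable by a positive Markov kernel $\Pi:\Om_1\times\Om_2\to\R$, I would set $\Pi^{[p_1]}(x,\om_1,\om_2):=\Pi(\om_1,\om_2)\bar p_1(x,\om_1)$ as in (\ref{pila1}) and consider the intermediate parametrized measure model $(M_1,\Om_1\times\Om_2,\mu_1\mu_2,\Pi^{[p_1]})$. Taking $S=\Om_1$ in (\ref{for:markov4}) shows $\int_{\Om_1\times\Om_2}\Pi^{[p_1]}(x,\cdot)\,d\mu_1\mu_2=p_1(x)(\Om_1)=1$, so this intermediate model is again a statistical model, which is required for the factorization to live in the correct category.

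First I would realize the inverse of the sufficient statistic. By Corollary \ref{lah2}, the projection $\pi_1:\Om_1\times\Om_2\to\Om_1$ is a sufficient statistic for the intermediate model, and the associated Markov morphism $(Id,(\pi_1)_*)$ carries the intermediate model onto $(M_1,\Om_1,\mu_1,p_1)$. Its right inverse is the Markov transition $\tilde T:\om_1\mapsto \delta_{\om_1}\otimes(\Pi(\om_1,\cdot)\mu_2)$ on $\Om_1\times\Om_2$. Using the normalization (\ref{im2}), namely $\int_{\Om_2}\Pi(\om_1,\om_2)\,d\mu_2=1$, one checks that $(\pi_1)_*\circ\tilde T_*=Id$, while (\ref{for:markov4}) together with (\ref{pila1}) gives $\tilde T_*(p_1(x))=\Pi^{[p_1]}(x,\cdot)\,d\mu_1d\mu_2$, so $\tilde T_*$ recovers the intermediate density exactly on the statistical model. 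This identifies $(Id,\tilde T_*)$ as the inverse of the Markov morphism associated with the sufficient statistic $\pi_1$.

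Finally I would take the second projection $\pi_2:\Om_1\times\Om_2\to\Om_2$ as the statistic. Pushing the intermediate measure forward under $\pi_2$ and applying Fubini yields, for the density on $\Om_2$,
\[
\frac{d(\pi_2)_*(\Pi^{[p_1]}(x,\cdot)\mu_1\mu_2)}{d\mu_2}(\om_2)=\int_{\Om_1}\Pi(\om_1,\om_2)\,\bar p_1(x,\om_1)\,d\mu_1,
\]
which by (\ref{im3a}) equals $d\Pi_*(p_1(x))/d\mu_2(\om_2)$, hence equals $\bar p_2(x,\om_2)$ by the defining relation (\ref{eq:mm}). Thus $(Id,(\pi_2)_*)$ is a statistic (in the sense of Example \ref{imstat}.1) mapping the intermediate model onto $(M_1,\Om_2,\mu_2,p_2)$, and composing $(Id,(\pi_2)_*)\circ(Id,\tilde T_*)$ sends $p_1(x)$ to $\Pi_*(p_1(x))=p_2(x)$, giving the asserted decomposition. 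I expect the main obstacle to be the last computation: verifying that the $\pi_2$-pushforward density matches $\bar p_2$ via the interchange of integration order and (\ref{im3a}), and confirming the attendant integrability so that $\pi_2$ is a bona fide statistic and $\tilde T_*$ is genuinely the inverse of the sufficient-statistic morphism rather than merely a one-sided inverse.
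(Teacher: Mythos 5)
Your proposal is correct and follows essentially the same route as the paper: both factor $(Id,\Pi_*)$ through the intermediate model $(M_1,\Om_1\times\Om_2,\mu_1\mu_2,\Pi^{[p_1]})$, use Corollary \ref{lah2} to identify $\pi_1$ as the sufficient statistic whose associated Markov morphism is inverted, and then push forward by $\pi_2$ via the computation (\ref{eq:id1}). The only difference is cosmetic: you realize the inverse explicitly as the Markov transition $\om_1\mapsto\delta_{\om_1}\otimes(\Pi(\om_1,\cdot)\mu_2)$, whereas the paper simply names the map $\Pi_{1,12}$ between models and asserts it inverts $(Id,(\pi_1)_*)$.
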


\begin{proof} Let  $\pi_2: \Om_1 \times \Om_2 \to \Om_2$ be the projection onto the second factor.  
Then   for  any $x\in M$  and any measurable set $S \subset \Om_2$ we have
\begin{eqnarray}
\int_S (\pi_2)_* [(\Pi^{[p_1]}(x, \om_1, \om_2) d\mu_1 d\mu_2] = \int_{\pi_2 ^{-1} (S)}\Pi^{[p_1]}(x, \om_1, \om_2) d\mu_1 d\mu_2=\nonumber\\
= \int_{\Om_1} \int_S \Pi(\om_1, \om_2) \bar{p}_1(x, \om_1)d\mu_1d\mu_2  = \Pi_* (p_1(x)) [S]. \label{eq:id1}
\end{eqnarray}
Let $(Id, \Pi_{1,12}): (M_1, \Om_1, \mu_1, p_1) \to (M_1, \Om_1 \times \Om_2, \mu_1\mu_2, \Pi^{[p_1]}(x, \om_1, \om_2)d\mu_1d\mu_2)$  be a map between statistical  models defined by
$$(x,\Om_1, \mu_1, p_1(x)) \mapsto  (x,\Om_1\times \Om_2, \mu_1\mu_2, \Pi^{[p_1]}(x,\om_1, \om_2)d\mu_1d\mu_2).$$
 Then $(Id,\Pi_{1,12})$ is the inverse of  the  Markov morphism $(Id, (\pi_1)_*)$, associated with a sufficient statistic   by Corollary \ref{lah2}. 
 By (\ref{eq:id1}),  $(Id, \Pi_*)$ is   a composition of  $(Id,\Pi_{1,12})$ with $(Id, (\pi_2)_*)$. This completes the proof of  Theorem \ref{decomp1}.
\end{proof}

Let $M_1 = M_2$. A  restricted Markov morphism of form $(f, T_*)$  is called {\it representable} if $f$ is a diffeomorphism, and
$T_*$ is $\mu$-representable. 

\begin{corollary}\label{co:monoton} (cf. \cite[p. 31]{AN2000}) 1.  Representable restricted Markov morphisms decrease the Fisher metric   on  $k$-integrable statistical  models $(M, \Om, \mu, p)$  where $\Om$ is a smooth manifold and  $\mu$ is  a Lebesgue  probability measure.

2. The Fisher metric is the unique  up to a constant weakly continuous quadratic 2-form field   on statistical  models  associated with finite sample spaces $\{\Om_n\}$ that is monotone  under  representable restricted Markov  morphisms.
\end{corollary}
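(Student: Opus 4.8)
The plan is to reduce both assertions to results already in hand, using the decomposition of Theorem \ref{decomp1} for the first assertion and the uniqueness statement of the Main Theorem for the second. For the first assertion, let $(f, T_*)$ be a representable restricted Markov morphism of a statistical model $(M, \Om, \mu, p)$, so that $f$ is a diffeomorphism and $T_* = \Pi_*$ is $\mu$-representable by a positive kernel. Since $f$ is a diffeomorphism, by Example \ref{imstat}.2 it merely reparametrizes $M$ and pulls the Amari-Chentsov structure back isometrically; hence it suffices to treat the case $f = Id$. I would then invoke Theorem \ref{decomp1}, which writes $(Id, \Pi_*)$ as the composition of $(Id, \Pi_{1,12})$---the inverse of the Markov morphism $(Id, (\pi_1)_*)$ associated with the sufficient statistic $\pi_1$ (Corollary \ref{lah2})---followed by the statistic $(Id, (\pi_2)_*)$. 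By Theorem \ref{th:inv} a sufficient statistic preserves the Fisher metric, so its inverse preserves it as well, while by the Monotonicity Theorem \ref{th:monoton} the statistic $\pi_2$ can only decrease the Fisher metric. Composing the two steps shows that $(Id, \Pi_*)$ decreases the Fisher metric, proving the first assertion.

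For the second assertion, the first assertion already shows that the Fisher metric is monotone. Conversely, suppose $F$ is a weakly continuous quadratic $2$-form field on statistical models associated with finite sample spaces $\{\Om_n\}$ that is monotone under representable restricted Markov morphisms. The key observation is that on finite sample spaces every relevant sufficient statistic arises, under the identifications of Example \ref{ex:suff1}, from a Markov congruent embedding $\Pi$ together with its associated statistic $\kappa$ satisfying $\kappa_* \circ \Pi = Id$ (Proposition \ref{congruent1}, Proposition \ref{pro:immark}). Since all Markov morphisms between finite sample spaces are $\mu$-representable (Example \ref{ex1}), both $(Id, \Pi_*)$ and $(Id, \kappa_*)$ are representable restricted Markov morphisms with identity reparametrization, and their composition is the identity morphism on the given $\Om_n$-model. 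Applying the monotonicity of $F$ to each of the two morphisms, together with the fact that their composition preserves $F$, forces $F$ to be preserved by each; in particular $F$ is invariant under the sufficient statistic $\kappa$.

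Having established invariance under sufficient statistics, I would conclude by invoking the uniqueness part of the Main Theorem (Theorem \ref{main}.2), equivalently Corollary \ref{cor:chentsovcampbell}.2 restricted to statistical models: on a statistical model the $1$-form field $A$ of the Main Theorem vanishes by (\ref{pushvector}) and the total mass $\int_\Om dp(x)$ is identically $1$, so the only weakly continuous quadratic form field invariant under sufficient statistics is $F = f(1)\, g^F$, a constant multiple of the Fisher metric. The hard part will be the bookkeeping in the second assertion: one must verify carefully that the Markov congruent embedding $\Pi$ and its statistic $\kappa$ genuinely furnish two representable restricted Markov morphisms whose composition is the identity on the prescribed finite-sample-space model, so that two-sided monotonicity collapses to exact invariance. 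Once this is in place, the reduction to the already-proved uniqueness result is immediate.
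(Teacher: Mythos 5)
Your proposal is correct and follows essentially the same route as the paper: part 1 via the decomposition of Theorem \ref{decomp1} combined with the Monotonicity Theorem \ref{th:monoton}, and part 2 via the identity $\Pi^\kappa_* \circ \Pi = Id$ for Markov congruent embeddings (Proposition \ref{pro:immark}), which upgrades two-sided monotonicity to invariance and reduces the claim to Chentsov's uniqueness result. Your explicit handling of the diffeomorphism $f$ and the verification that both $\Pi$ and $\kappa_*$ are representable are details the paper leaves implicit, but the argument is the same.
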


\begin{proof} The first assertion of Corollary \ref{co:monoton} is an immediate consequence of Theorem \ref{decomp1} and Theorem \ref{th:monoton}, noting that $\pi_2 $ is smooth.

The second  assertion of Corollary \ref{co:monoton} is  a consequence  of Theorem \ref{decomp1} and  Proposition \ref{chentsovcampbell}, taking into account the following fact.   A congruent Markov embedding $\Pi: \Pp (\Om_n, \mu_n) \to 
\Pp(\Om_m, \mu_m)$ subjected to  a statistic $\kappa$   satisfies  $\Pi ^\kappa _* \Pi (x) = Id$ by Proposition
\ref{pro:immark}. Since any quadratic form field on $( \Mm_+(\Om_n, \mu_n), \mu_n,  p (x) : = x)$
that is monotone under  Markov morphisms  is monotone under
Markov congruent embeddings, it follows that such a quadratic form is invariant under sufficient statistics $\kappa_*$
and also invariant under Markov congruent embeddings.  Chentsov's result implies that such a quadratic form  is the Fisher metric up to a constant.
\end{proof}


\section{The Pistone-Sempi structure}\label{pistone}

In this section we study  the relations between  $k$-integrable parametrized measure models and statistical models in the Pistone-Sempi theory. First,  we show that the Pistone-Sempi manifold is a $k$-integrable  parametrized measure model for any $k$ (Proposition \ref{pro:pise}).
We also construct an example of a $k$-integrable parametrized measure model which does not admit a continuous map into the 
space $\Mm_+(\Omega, \mu_0)$ with the topology of Pistone and Sempi (Example \ref{example-notPS}).

In Section \ref{pmm}, we considered the $L^1$-topology of $\Mm_+(\Omega,\mu_0)$. However, 
this set carries also a stronger natural topology, discovered by Pistone and Sempi, which is referred to as 
{\it the exponential topology} (also {\it $e$-topology}) \cite[\S 2.1]{PS1995}. 
In fact, Pistone and Sempi considered only  the space $\Pp_{+} (\Om, \mu)$  but  their theory  works also for
$\Mm_{+} (\Om, \mu) = \Pp_{+} (\Om, \mu) \times \R^+$.  
Let us briefly recall the notion of the  $e$-topology, which  is defined using the
notion of convergence of sequences.

\begin{definition}\label{def:sempi1}\cite[Definition 1.1]{PS1995} The sequence $(\mu_n)_{n \in \N}$ in $\Mm_{+} (\Om, \mu)$ is $e$-convergent (exponentially convergent) to $\mu$ if $(\mu_n)_{n\in \N}$ tends to $\mu$ in the $L^1$-topology as $n \to \infty$, and, moreover, the sequences 
$(d\mu_n/d\mu)_{n\in \N}$ and $(d\mu/d\mu_n)_{n\in \N}$ are eventually bounded in each $L^p(\Omega,\mu),\, p > 1$, that is, $d\mu_n/d\mu$ and 
$d\mu/d\mu_n$  converge to 1 with respect to all $p$-seminorms 
$L^p (\Omega,\mu),\, p > 1$.
\end{definition}

While $\Mm_+(\Omega,\mu_0)$ is connected with respect to the $L^1$-topology, its set of connected components with respect to   
the $e$-topology is more interesting. In what follows we briefly describe these components and their structure. Although the stated 
facts are known from the work of Pistone an Sempi, our presentation is slightly different and illuminates more abstract aspects.              

\subsection{Orlicz spaces} \label{section:Orlicz}

In this section, we briefly recall the theory of Orlicz spaces which is needed in section \ref{pistonesempi} for the description of 
the geometric structure on $\Mm(\Omega)$. Most of the results can be found e.g. in \cite{KR1958}.

A function $\phi: \R \to \R$ is called a {\em Young function } if $\phi(0) = 0$, $\phi$ is even, convex, strictly increasing on $[0, \infty)$ and $\lim_{t \to \infty} t^{-1} \phi(t) = \infty$. Given a finite measure space $(\Om, \mu)$ and a Young function $\phi$, we define the {\em Orlicz space}
\[
L^\phi(\mu) := \left\{ f : \Om \to \R \mid \int_\Om \phi\left(\frac f a\right) d\mu < \infty \mbox{ for some $a > 0$}\right\},
\]
and on $L^\phi(\mu)$ we define the {\em Orlicz norm }
\[
||f||_{\phi, \mu} := \inf \left\{ a > 0 \mid \int_\Om \phi\left(\frac f a\right) d\mu \leq 1\right\}.
\]
For any Young function, $(L^\phi(\mu), || \cdot ||_{\phi, \mu})$ is a Banach space. Moreover, a sequence $(f_n)_{n \in \N} \in L^\phi(\mu)$ converges to $0$ if and only if
\[
\lim_{n \to \infty} \int_\Om \phi (p f_n)\ d\mu = 0 \qquad \mbox{ for all $p > 0$}.
\]

\begin{proposition} \label{prop:compare-Orlicz}
Let $(\Om, \mu)$ be a finite measure space, and let $\phi_1, \phi_2: \R \to \R$ be two Young functions. If
\[
\limsup_{t \to \infty} \frac{\phi_1(t)}{\phi_2(t)} < \infty,
\]
then $L^{\phi_2}(\mu) \subset L^{\phi_1}(\mu)$, and the inclusion is continuous, i.e., $||f||_{\phi_1, \mu} \leq c\ ||f||_{\phi_2, \mu}$ for some $c > 0$ and all $f \in L^{\phi_2}(\mu)$. In particular, if
\[
0 < \liminf_{t \to \infty} \frac{\phi_1(t)}{\phi_2(t)} \le \limsup_{t \to \infty} \frac{\phi_1(t)}{\phi_2(t)} < \infty,
\]
then $L^{\phi_1}(\mu) = L^{\phi_2}(\mu)$, and the Orlicz norms $|| \cdot ||_{\phi_1, \mu}$ and $|| \cdot ||_{\phi_2, \mu}$ are equivalent.
\end{proposition}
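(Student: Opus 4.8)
The plan is to convert the asymptotic hypothesis on $\phi_1/\phi_2$ into a single global pointwise inequality between the two Young functions, and then to leverage the convexity of $\phi_1$ to transfer that inequality to the Luxemburg norms. First I would record that $\limsup_{t\to\infty}\phi_1(t)/\phi_2(t) < \infty$ yields constants $K, t_0 > 0$ with $\phi_1(t) \le K\phi_2(t)$ for all $t \ge t_0$; here I use that $\phi_2$ is strictly increasing with $\phi_2(0) = 0$, so it is positive on $(0,\infty)$ and the quotient is well defined. On the range $[0,t_0]$ the increasing function $\phi_1$ is bounded by $C := \phi_1(t_0)$, and since both functions are even I obtain the global estimate $\phi_1(t) \le K\phi_2(t) + C$ for all $t \in \R$. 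The additive constant is unavoidable because $\phi_2$ may vanish faster than $\phi_1$ near the origin, but it is harmless precisely because $\mu(\Om) < \infty$.

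The key step is the norm inequality, from which the inclusion $L^{\phi_2}(\mu) \subset L^{\phi_1}(\mu)$ follows for free. Given $f \in L^{\phi_2}(\mu)$ with $b := \|f\|_{\phi_2,\mu} > 0$, I would invoke the standard property of the Luxemburg norm that $\int_\Om \phi_2(f/b)\,d\mu \le 1$, obtained by letting $a \downarrow b$ in the defining infimum and applying monotone convergence (this is where continuity of $\phi_2$ enters). Convexity of $\phi_1$ together with $\phi_1(0) = 0$ gives $\phi_1(s/c) \le c^{-1}\phi_1(s)$ for every $c \ge 1$, so with $c := \max\!\left(1,\, K + C\mu(\Om)\right)$ the global estimate combines into
\[
\int_\Om \phi_1\!\left(\frac{f}{cb}\right) d\mu \le \frac1c \int_\Om \phi_1\!\left(\frac{f}{b}\right) d\mu \le \frac1c\left(K\int_\Om \phi_2\!\left(\frac{f}{b}\right) d\mu + C\mu(\Om)\right) \le \frac{K + C\mu(\Om)}{c} \le 1.
\]
Hence $\|f\|_{\phi_1,\mu} \le c\,b = c\,\|f\|_{\phi_2,\mu}$, which simultaneously establishes the continuous inclusion and finiteness of the $\phi_1$-norm.

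For the final assertion I would note that $\liminf_{t\to\infty}\phi_1(t)/\phi_2(t) > 0$ is exactly the statement $\limsup_{t\to\infty}\phi_2(t)/\phi_1(t) < \infty$, so the first part applies verbatim with the roles of $\phi_1$ and $\phi_2$ interchanged; the two inclusions then give $L^{\phi_1}(\mu) = L^{\phi_2}(\mu)$ and the two norm estimates give equivalence of $\|\cdot\|_{\phi_1,\mu}$ and $\|\cdot\|_{\phi_2,\mu}$. I expect the only genuinely delicate point to be the attainment property $\int_\Om \phi_2(f/\|f\|_{\phi_2,\mu})\,d\mu \le 1$ of the Luxemburg norm; everything else is a routine combination of the pointwise domination, the convexity scaling, and the finiteness of $\mu$.
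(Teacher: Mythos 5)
Your proof is correct and follows essentially the same route as the paper's: both rest on the eventual domination $\phi_1(t)\le K\phi_2(t)$ for $t\ge t_0$, absorb the region of small $|f|$ using $\mu(\Om)<\infty$ (you via the global bound $\phi_1\le K\phi_2+\phi_1(t_0)$, the paper via the split $\Om=\{|f|\ge at_0\}\,\dot\cup\,\{|f|<at_0\}$, which is the same estimate), and then rescale with the convexity inequality $\phi_1(s/c)\le c^{-1}\phi_1(s)$ to get the same constant $c=K+\phi_1(t_0)\mu(\Om)$. The only cosmetic difference is that you work at $a=\|f\|_{\phi_2,\mu}$ via the attainment property of the Luxemburg norm (correctly justified by monotone convergence), whereas the paper works with $a>\|f\|_{\phi_2,\mu}$ and lets $a$ decrease, avoiding that lemma.
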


\begin{proof}
By our hypothesis, $\phi_1(t) \leq K \phi_2(t)$ for some $K \ge 1$ and all $t \ge t_0$. Let $f \in L^{\phi_2}(\mu)$ and $a > || f ||_{\phi_2, \mu}$. Moreover, decompose
\[
\Om := \Om_1 \dot \cup \Omega_2 \qquad \mbox{with} \qquad \Omega_1:= \{ \omega \in \Omega \mid |f(\omega)| \ge a t_0 \}.
\]
Then
\begin{eqnarray*}
K & \ge & K \int_\Omega \phi_2 \left( \frac{|f|}a \right) d\mu \ge \int_{\Omega_1} K \phi_2 \left( \frac{|f|}a \right) d\mu\\
& \ge & \int_{\Omega_1} \phi_1 \left( \frac{|f|}a \right) d\mu \qquad \qquad \qquad \mbox{as $\dfrac{|f|}a \ge t_0$ on $\Omega_1$}\\
& = & \int_\Omega \phi_1 \left( \frac{|f|}a \right) d\mu - \int_{\Omega_2} \phi_1 \left( \frac{|f|}a \right) d\mu \\
& \ge & \int_\Omega \phi_1 \left( \frac{|f|}a \right) d\mu - \int_{\Omega_2} \phi_1( t_0) d\mu \qquad \mbox{as $\dfrac{|f|}a < t_0$ on $\Omega_2$}\\
& \ge & \int_\Omega \phi_1 \left( \frac{|f|}a \right) d\mu - \phi_1( t_0) \mu(\Omega).
\end{eqnarray*}

Thus, $\int_\Omega \phi_1 \left( \frac{|f|}a \right) \le K + \phi_1( t_0) \mu(\Omega) =: c$, hence $f \in L^{\phi_1}(\mu)$. Convexity and $\phi_1(0) = 0$ implies that $\phi_1(c^{-1} t) \le c^{-1} \phi_1(t)$, as $c > 1$ and hence,
\[
\int_\Omega \phi_1 \left( \frac{|f|}{ac}  \right)  d\mu \le c^{-1} \int_\Omega \phi_1 \left( \frac{|f|}a  \right) d\mu \le 1,
\]
so that $a c \ge || f ||_{\phi_1, \mu}$ whenever $a > || f ||_{\phi_2, \mu}$, and this shows the claim.
\end{proof}

The following lemma is a straightforward consequence of the definitions and we omit the proof.

\begin{lemma} \label{lem:young-stretch}
Let $(\Omega, \mu)$ be a finite measure space, let $\phi: \R \to \R$ be a Young function, and let $\tilde \phi(t) := \phi(\lambda t)$ for some constant $\lambda > 0$.

Then $\tilde \phi$ is also a Young function. Moreover, $L^\phi (\mu) = L^{\tilde \phi}(\mu)$ and $|| \cdot ||_{\tilde \phi, \mu} = \lambda || \cdot ||_{\phi, \mu}$, so that these norms are equivalent.
\end{lemma}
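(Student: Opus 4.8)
The plan is to separate the two assertions. First I would confirm that $\tilde\phi(t) = \phi(\lambda t)$ is again a Young function, and then I would show that rescaling the argument of $\phi$ by $\lambda > 0$ simply multiplies the Orlicz norm by $\lambda$ while leaving the underlying space unchanged. Both parts rest on the elementary substitution $b = a/\lambda$ in the defining integrals and infima.

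For the first part, I would check the five defining properties in turn. The equalities $\tilde\phi(0) = \phi(0) = 0$ and $\tilde\phi(-t) = \phi(-\lambda t) = \phi(\lambda t) = \tilde\phi(t)$ are immediate since $\lambda > 0$ and $\phi$ is even. Convexity of $\tilde\phi$ holds because it is the composition of the convex $\phi$ with the linear map $t \mapsto \lambda t$, and strict monotonicity on $[0,\infty)$ follows because $0 \le s < t$ forces $\lambda s < \lambda t$ and $\phi$ is strictly increasing there. The only property not readable off directly is the superlinear growth: setting $u = \lambda t$ gives $t^{-1}\tilde\phi(t) = \lambda\, u^{-1}\phi(u) \to \infty$ as $t \to \infty$.

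For the second part, I would use the pointwise identity $\tilde\phi(f/a) = \phi(\lambda f/a) = \phi\big(f/(a/\lambda)\big)$. This shows that $\int_\Om \tilde\phi(f/a)\, d\mu < \infty$ for some $a > 0$ exactly when $\int_\Om \phi(f/b)\, d\mu < \infty$ for some $b > 0$, so $L^{\tilde\phi}(\mu) = L^\phi(\mu)$. Substituting the same $b = a/\lambda$ into the infimum defining the norm and using that $a \mapsto a/\lambda$ is a bijection of $(0,\infty)$ onto itself yields
\[
||f||_{\tilde\phi,\mu} = \inf\Big\{ a > 0 : \textstyle\int_\Om \phi\big(f/(a/\lambda)\big)\, d\mu \le 1\Big\} = \lambda\, \inf\Big\{ b > 0 : \textstyle\int_\Om \phi(f/b)\, d\mu \le 1\Big\} = \lambda\, ||f||_{\phi,\mu}.
\]
Equivalence of the norms is then immediate. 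I do not expect a genuine obstacle here: the argument is pure bookkeeping with the rescaling parameter, and the only step requiring more than a one-line check is the superlinear growth condition, where the substitution $u = \lambda t$ is needed rather than a direct reading.
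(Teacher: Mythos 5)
Your proof is correct, and it is exactly the straightforward substitution argument ($b = a/\lambda$ in the defining infimum, plus a routine check of the Young function axioms) that the paper has in mind when it omits the proof as "a straightforward consequence of the definitions." Nothing to add.
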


Furthermore, we investigate how the Orlicz spaces relate when changing the measure $\mu$ to an equivalent measure $\mu' \in \Mm(\Omega, \mu)$.

\begin{proposition} \label{prop:orlitz-changemeasure}
Let $0 \neq \mu' \in \Mm(\Om, \mu)$ be a measure such that $d\mu'/d\mu \in L^p(\Om, \mu)$ for some $p > 1$, and let $q > 1$ be the dual index, i.e., $p^{-1} + q^{-1} = 1$. Then for any Young function $\phi$ we have
\[
L^{\phi^q}(\mu) \subset L^\phi(\mu'),
\]
and this embedding is continuous.
\end{proposition}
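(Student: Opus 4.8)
The plan is to reduce the containment to a single application of Hölder's inequality, after changing the reference measure, and then to absorb the resulting constant by the standard convexity rescaling for Young functions.

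First I would check that $\phi^q$ (meaning the map $t \mapsto \phi(t)^q$) is again a Young function, so that $L^{\phi^q}(\mu)$ and its Orlicz norm are well defined. It is even and vanishes at $0$ because $\phi$ is; it is convex and strictly increasing on $[0,\infty)$ as the composition of the convex increasing map $s \mapsto s^q$ (here $q > 1$) with the nonnegative convex increasing function $\phi|_{[0,\infty)}$; and $t^{-1}\phi(t)^q \ge t^{-1}\phi(t) \to \infty$ once $\phi(t) \ge 1$, so the superlinear growth condition holds.

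Next, for $f \in L^{\phi^q}(\mu)$ and any $a > 0$, I would rewrite the defining integral over $\mu'$ using the density and apply Hölder's inequality with the dual exponents $q$ and $p$:
\[
\int_\Om \phi\!\left(\frac{f}{a}\right) d\mu' = \int_\Om \phi\!\left(\frac{f}{a}\right) \frac{d\mu'}{d\mu}\, d\mu \le \left(\int_\Om \phi\!\left(\frac{f}{a}\right)^{\!q} d\mu\right)^{1/q} \left\| \frac{d\mu'}{d\mu}\right\|_{L^p(\mu)}.
\]
Writing $C := \|d\mu'/d\mu\|_{L^p(\mu)}$, which is finite by hypothesis, the right-hand side equals $C\,(\int_\Om \phi^q(f/a)\, d\mu)^{1/q}$.

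The remaining point is to choose the scale so that this bound does not exceed $1$. Fix $a > \|f\|_{\phi^q,\mu}$, so that $\int_\Om \phi^q(f/a)\,d\mu \le 1$. For $\lambda \ge 1$, the convexity of $\phi^q$ together with $\phi^q(0) = 0$ gives $\phi^q(t/\lambda) \le \lambda^{-1}\phi^q(t)$ (the same rescaling used in the proof of Proposition \ref{prop:compare-Orlicz}), whence $\int_\Om \phi^q(f/(\lambda a))\,d\mu \le \lambda^{-1}$ and therefore $\int_\Om \phi(f/(\lambda a))\,d\mu' \le C\lambda^{-1/q}$. Taking $\lambda := \max(1, C^q)$ makes this bound at most $1$ in both cases $C \le 1$ and $C > 1$, so $\|f\|_{\phi,\mu'} \le \lambda a$; letting $a \downarrow \|f\|_{\phi^q,\mu}$ yields $f \in L^\phi(\mu')$ with $\|f\|_{\phi,\mu'} \le \max(1, C^q)\,\|f\|_{\phi^q,\mu}$, which gives both the containment and the continuity of the embedding. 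I do not expect a serious obstacle: the only points requiring care are the verification that $\phi^q$ is a genuine Young function and the uniform handling of the two regimes $C \le 1$ and $C > 1$ via $\lambda = \max(1, C^q)$. The choice of Hölder exponents $(q,p)$ is forced by the hypothesis $d\mu'/d\mu \in L^p(\mu)$ and is the one substantive idea.
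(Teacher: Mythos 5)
Your proof is correct and follows essentially the same route as the paper's: the key step in both is H\"older's inequality with the dual exponents $(q,p)$ applied to $\int_\Om \phi(f/a)\,\frac{d\mu'}{d\mu}\,d\mu$, followed by absorbing the constant $C=\|d\mu'/d\mu\|_{L^p(\mu)}$. The only cosmetic difference is the absorption mechanism --- you rescale the argument of $\phi^q$ via the convexity inequality $\phi^q(t/\lambda)\le\lambda^{-1}\phi^q(t)$, whereas the paper rescales the Young function to $c^{-1}\phi$ and invokes the norm equivalence from Proposition \ref{prop:compare-Orlicz}; both rest on the same convexity trick.
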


\begin{proof}
Let $h := d\mu'/d\mu \in L^p(\Om, \mu)$ and $c := ||h||_p > 0$. If $f \in L^{\phi^q}(\mu)$ and $a > ||f||_{\phi^q, \mu}$, then by H\"older's inequality we have
\[
\int_\Om \phi\left( \frac{|f|}a \right)\ d\mu' = \int_\Om \phi\left( \frac{|f|}a \right) h\ d\mu \le c \left| \left|\phi\left( \frac{|f|}a \right)\right|\right|_q = c \underbrace{\left| \left|\phi^q \left( \frac{|f|}a \right)\right|\right|_1^{1/q}}_{\le 1} \le c.
\]
Thus, $f \in L^\phi(\mu')$, and $a \ge ||f||_{c^{-1} \phi, \mu'}$ whenever $a > ||f||_{\phi^q, \mu}$, hence $||f||_{\phi^q, \mu} \ge ||f||_{c^{-1} \phi, \mu'}$. This shows the claim as $|| \cdot ||_{c^{-1} \phi, \mu'}$ and $|| \cdot ||_{\phi, \mu'}$ are equivalent norms on $L^\phi(\mu')$ by Proposition \ref{prop:compare-Orlicz}.
\end{proof}

\subsection{Exponential tangent spaces} \label{pistonesempi}

For an arbitrary $\mu \in \Mm_+ (\Omega, \mu_0)$, we define the set
\[
\hat{B}_{\mu}(\Omega) := \{ f :  \Omega \to [- \infty, + \infty] \; : \; e^f \in L^1(\Omega, \mu) \} ,
\]
which by H\"older's inequality is a convex cone inside the space of measurable functions $\Omega \to [ - \infty, + \infty ]$. For $\mu_0$, 
there is a bijection 
\[
    \log_{\mu_0} : \Mm_+(\Omega, \mu_0) \to \hat{B}_{\mu_0}(\Omega), \qquad \phi \, \mu_0 \mapsto \log (\phi),
\]
and for $\mu_0' \in \Mm_+(\Omega, \mu_0)$ we have $\log_{\mu_0'} = \log_{\mu_0} - u$ where $u := \log_{\mu_0'} (\mu_0')$. That is, $\log_{\mu_0}$
canonically identifies $\Mm_+(\Omega, \mu_0)$ with a convex set. Moreover, we let
\begin{eqnarray*}
B_\mu(\Om) & := & \hat B_\mu(\Om) \cap (-\hat B_\mu(\Om))\\
& = & \{ f: \Om \to [-\infty, \infty] \mid e^{\pm f} \in L^1(\Om, \mu)\}\\
& = & \{ f: \Om \to [-\infty, \infty] \mid e^{|f|} \in L^1(\Om, \mu)\}
\end{eqnarray*}
and
\[
B_\mu^0(\Om) := \{ f \in B_\mu(\Om) \mid (1+s) f \in B_\mu(\Om) \mbox{ for some $s > 0$}\}.
\]
The points of $B_\mu^0(\Om)$ are called {\em inner points} of $B_\mu(\Om)$.

Note that  for $\mu \in \Mm_+(\Om, \mu_0)$  we have $B_\mu (\Om)  \subset B_{\mu_0} (\Om)$.\\

\begin{definition}\label{def:tangent}
Let $\mu \in \Mm_+(\Om, \mu_0)$. Then
$$
T_\mu \Mm_+(\Om, \mu_0) := \{ f: \Om \to [-\infty, \infty] \mid \mbox{ $t f \in B_{\mu} (\Om)$ for some $t \neq 0$}\}
$$
is called the {\it exponential tangent space of $\Mm_+(\Omega, \mu_0)$ at $\mu$}.
\end{definition}


Evidently, this space coincides with the Orlicz space $T_\mu \Mm_+(\Om, \mu_0) = L^{\cosh t - 1}(\mu)$ and hence has a Banach norm. Moreover, $B_\mu(\Om) \subset T_\mu \Mm_+(\Om, \mu_0)$ contains the unit ball w.r.t. the Orlicz norm and hence is a neighborhood of the origin. Furthermore, $\lim_{t \to \infty} t^p/(\cosh t - 1) = 0$ for all $p \ge 1$, so that Proposition \ref{prop:compare-Orlicz} implies that

\begin{equation} \label{eq:inclusion}
    L^\infty (\Omega, \mu_0) \; \subset \; T_\mu \Mm_+ (\Omega, \mu_0) \; \subset \; \bigcap_{p \geq 1} L^p (\Omega, \mu),
\end{equation} 
where all inclusions are continuous.
\begin{remark}
In \cite[Definition 6]{GP1998}, $T_\mu \Mm_+(\Om, \mu_0)$ is called the {\em Cramer class }�of $\mu$. Moreover, in \cite[Proposition 7]{GP1998} and \cite[Definition 2.2]{PS1995}, the subspace of {\em centered Cramer class } is defined as the functions $u \in T_\mu \Mm_+(\Om, \mu_0)$ with $\int_\Om u\ d\mu = 0$. Thus, the space of centered Cramer classes is a closed subspace of codimension one.
\end{remark}


In order to understand the topological structure of $\Mm_+(\Om, \mu_0)$ with respect to the $e$-topology, 
it is useful to introduce the following preorder on 
$\Mm_+(\Om, \mu_0)$:
\begin{equation} \label{def:partial-order}
\mu' \preceq \mu \quad \mbox{if and only if } \quad \mu' = \phi \mu 
\mbox{ with }
\phi \in L^{p}(\Om, \mu)
\mbox{ for some $p > 1$}.
\end{equation}

In order to see that $\preceq$ is indeed a preorder, we have to show transitivity, as the reflexivity of $\preceq$ is obvious. 
Thus, let $\mu'' \preceq \mu'$ and $\mu' \preceq \mu$, so that  $\mu' = \phi \mu$ and $\mu'' = \psi \mu'$ with $\phi \in L^p(\Om, \mu)$ and $\psi \in L^{p'}(\Om, \mu')$, then $\phi^p, \psi^{p'} \phi \in L^1(\Om, \mu)$ for some $p, p' > 1$. Let $\lambda := (p'-1)/(p + p'-1) \in (0,1)$. Then by H\"older's inequality, we have:
\[
L^1(\Om, \mu_1) \ni (\psi^{p'} \phi)^{1-\lambda} (\phi^p)^\lambda = \psi^{p'(1-\lambda)} \phi^{1+ \lambda (p-1)} = (\psi \phi)^{p''},
\]
where $p'' = p p'/(p + p' -1) > 1$, so that $\psi \phi \in L^{p''}(\Om, \mu)$, and hence, $\mu'' \preceq \mu$ as $\mu'' = \psi \phi \mu$.

From the preorder $\preceq$ we define the equivalence relation on $\Mm_+(\Om, \mu_0)$ by
\begin{equation} \label{df:equivalence}
\mu' \sim \mu \qquad \mbox{if and only if $\mu' \preceq \mu$ and $\mu \preceq \mu'$},
\end{equation}
in which case we call $\mu$ and $\mu'$ {\em similar}, and hence we obtain a partial ordering on the set of equivalence classes 
$\Mm_+(\Om, \mu_0)/_\sim$
\[
{}[\mu'] \preceq [\mu] \qquad \mbox{if and only if} \qquad \mu' \preceq \mu.
\]

If $\mu' \preceq \mu$, then $T_\mu \Mm_+ (\Om, \mu_0) \subset T_{\mu'} \Mm_+ (\Om, \mu_0)$ is continuously embedded. Namely, $\lim_{t \to \infty} (\cosh t - 1)^q/(\cosh (qt) - 1) = 2^{1-q}$, and then we apply Propositions \ref{prop:compare-Orlicz} and \ref{prop:orlitz-changemeasure} as well as Lemma \ref{lem:young-stretch}. 

%
%
%
%

In particular, if $\mu \sim \mu'$ then $T_\mu \Mm_+(\Om, \mu_0) = T_{\mu'} \Mm_+(\Om, \mu_0)$, 
and this space we denote by $T_{[\mu]} \Mm_+(\Om, \mu_0)$. This space is therefore  equipped with a family of equivalent Banach norms, and we have continuous inclusions
\begin{equation} \label{eq:inclusion-tangentspace}
T_{[\mu']} \Mm_+(\Om, \mu_0) \supset T_{[\mu]} \Mm_+(\Om, \mu_0) \qquad \mbox{if} \qquad [\mu'] \preceq [\mu].
\end{equation}

\begin{remark} In general, the subspace in (\ref{eq:inclusion-tangentspace}) will be neither closed nor dense. Indeed, it is not hard to show that $f \in T_{[\mu']} \Mm_+(\Om, \mu_0)$ lies in the closure of $T_{[\mu]} \Mm_+(\Om, \mu_0)$ if and only if
\[
(|f| + \epsilon \log (d\mu'/d\mu))_+ \in T_{[\mu]} \Mm_+(\Om, \mu_0) \qquad \mbox{for all $\epsilon > 0$}.
\]
\end{remark}




The following now is a reformulation of Propositions 3.4 and 3.5 in \cite{PS1995}.

\begin{proposition}\label{prop:econv} A sequence $(g_n)_{n \in \N} \in \Mm(\Om, \mu_0)$ is $e$-convergent to $g \in \Mm(\Om, \mu_0)$ if and only if $g_n \mu_0 \sim g \mu_0$ for large $n$, and $u_n := \log |g_n| \in T_{g \mu_0}\Mm_+(\Om, \mu_0)$ converges to $u_0 := \log |g| \in T_{g \mu_0}\Mm_+(\Om, \mu_0)$ in the Banach norm on $T_{g \mu_0}\Mm_+(\Om, \mu_0)$ described above.
\end{proposition}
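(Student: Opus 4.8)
The plan is to translate everything into the single sequence of log-densities. Writing $\mu:=g\mu_0$, $\mu_n:=g_n\mu_0$ and replacing $g_n,g$ by $|g_n|,|g|$ so that all densities are positive and the logarithms are defined, I set $w_n:=u_n-u_0=\log|g_n|-\log|g|=\log(d\mu_n/d\mu)$, so that $d\mu_n/d\mu=e^{w_n}$ and $d\mu/d\mu_n=e^{-w_n}$ $\mu$-a.e. Since $T_\mu\Mm_+(\Om,\mu_0)=L^{\cosh t-1}(\mu)$, the convergence criterion for Orlicz spaces recalled before Proposition \ref{prop:compare-Orlicz} tells us that $w_n\to 0$ in this Banach norm if and only if
\[
\int_\Om \big(\cosh(p\,w_n)-1\big)\,d\mu \;\longrightarrow\; 0 \qquad\text{for every } p>0;
\]
call this condition $(\ast)$. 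Because $\cosh$ is even, $(\ast)$ controls the two density ratios $e^{pw_n}=(d\mu_n/d\mu)^p$ and $e^{-pw_n}=(d\mu/d\mu_n)^p$ symmetrically, and matching this two-sidedness to the two conditions on $d\mu_n/d\mu$ and $d\mu/d\mu_n$ in Definition \ref{def:sempi1} is the conceptual heart of the statement. Thus the whole proposition reduces to the equivalence between $e$-convergence and ``$(\ast)$ together with $\mu_n\sim\mu$ for large $n$''.

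First I would treat the direction from $e$-convergence to $(\ast)$ plus similarity. By Definition \ref{def:sempi1}, $e$-convergence gives $e^{w_n}=d\mu_n/d\mu\to 1$ and $e^{-w_n}=d\mu/d\mu_n\to 1$ in $L^p(\mu)$ for every $p>1$. Similarity is then easy: $\mu_n\preceq\mu$ holds for large $n$ since $d\mu_n/d\mu\in L^p(\mu)$, while $\mu\preceq\mu_n$ follows from the change-of-measure identity $\int (d\mu/d\mu_n)^p\,d\mu=\int (d\mu/d\mu_n)^{p+1}\,d\mu_n$, which upgrades $L^p(\mu)$-integrability of $d\mu/d\mu_n$ to $L^{p+1}(\mu_n)$-integrability with $p+1>1$, so that $u_n,u_0,w_n$ all lie in the common space $T_{[\mu]}\Mm_+(\Om,\mu_0)$ of \eqref{eq:inclusion-tangentspace}. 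To get $(\ast)$ I would write
\[
\int_\Om\big(\cosh(p w_n)-1\big)\,d\mu=\tfrac12\int_\Om\big((e^{w_n})^{p}-1\big)\,d\mu+\tfrac12\int_\Om\big((e^{-w_n})^{p}-1\big)\,d\mu,
\]
and show each moment tends to $\mu(\Om)$: from $e^{\pm w_n}\to 1$ in $L^{p+1}(\mu)$ one has convergence in measure together with an $L^{(p+1)/p}$-bound on the $p$-th powers, so Vitali's convergence theorem yields $\int (e^{\pm w_n})^{p}\,d\mu\to\int 1\,d\mu=\mu(\Om)$.

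For the converse I would assume $\mu_n\sim\mu$ for large $n$, which by \eqref{eq:inclusion-tangentspace} places $w_n$ in the fixed Banach space $T_{[\mu]}\Mm_+(\Om,\mu_0)$, and assume $(\ast)$. The instance $p=1$ of $(\ast)$ reads $\int_\Om 2\sinh^2(w_n/2)\,d\mu\to 0$ (using $\cosh w-1=2\sinh^2(w/2)$), hence $\sinh(w_n/2)\to 0$ in $L^2(\mu)$ and so $w_n\to 0$ in measure. For each fixed $p\ge 1$ the elementary bound
\[
|e^{w_n}-1|^{p}\le\big(e^{|w_n|}-1\big)^{p}\le e^{p|w_n|}\le 2\cosh(p\,w_n)
\]
shows, via $(\ast)$ at exponent $2p$, that the family $\{|e^{w_n}-1|^{p}\}_n$ is bounded in $L^{2}(\mu)$ and therefore uniformly integrable; since $|e^{w_n}-1|^{p}\to 0$ in measure, Vitali's theorem gives $\int_\Om|e^{w_n}-1|^{p}\,d\mu\to 0$, i.e. $d\mu_n/d\mu\to 1$ in $L^p(\mu)$. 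The same argument with $-w_n$ gives $d\mu/d\mu_n\to 1$ in $L^p(\mu)$, and the case $p=1$ is precisely $\|\mu_n-\mu\|_{TV}\to 0$. Together these are exactly the conditions of Definition \ref{def:sempi1}, so $\mu_n\to\mu$ $e$-converges.

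I expect the main obstacle to be extracting the one-sided information — $L^1$ (total variation) convergence $\mu_n\to\mu$ and $L^p$-convergence of each ratio to $1$ — from the symmetric quantity in $(\ast)$, and conversely producing the full family $(\ast)$ for all $p$ from $e$-convergence. The clean device in both cases is to combine the identity $\cosh w-1=2\sinh^2(w/2)$ (which turns the $p=1$ instance of $(\ast)$ into $w_n\to0$ in measure) with a uniform-integrability bound coming from one higher exponent, and then invoke Vitali's convergence theorem; an alternative for the total-variation bound is the factorization $e^{w}-1=2\sinh(w/2)\,e^{w/2}$ together with Cauchy--Schwarz, splitting off a factor that is small in $L^2$ and one that is bounded in $L^2$. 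Keeping track of the reduction to positive densities and of the role of the similarity hypothesis in fixing the ambient Banach space is routine but should be stated explicitly.
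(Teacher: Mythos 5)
Your argument is correct, but it is worth noting that the paper itself offers no proof of Proposition \ref{prop:econv}: it is presented only as ``a reformulation of Propositions 3.4 and 3.5 in \cite{PS1995}'', so the comparison here is between your self-contained argument and an external citation. What you supply is a clean direct proof built on two ingredients already available in the paper: the sequential convergence criterion for Orlicz norms stated just before Proposition \ref{prop:compare-Orlicz} (which turns norm convergence of $w_n=u_n-u_0$ in $L^{\cosh t-1}(\mu)$ into the family of conditions $\int_\Om(\cosh(pw_n)-1)\,d\mu\to 0$ for all $p>0$), and a convergence-in-measure-plus-uniform-integrability (Vitali) scheme to pass between the symmetric quantity $\cosh(pw_n)$ and the one-sided ratios $(d\mu_n/d\mu)^{\pm p}$ appearing in Definition \ref{def:sempi1}. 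The individual estimates all check out: the change-of-measure identity upgrading $d\mu/d\mu_n\in L^p(\mu)$ to $L^{p+1}(\mu_n)$ correctly yields $\mu\preceq\mu_n$; the identity $\cosh w-1=2\sinh^2(w/2)$ gives convergence in measure from the $p=1$ instance; and the chain $|e^{w_n}-1|^p\le 2\cosh(pw_n)$ together with $4\cosh^2(pw_n)=2+2\cosh(2pw_n)$ gives the $L^2$-bound needed for uniform integrability. Two small points deserve explicit mention if this were written out: first, Vitali's theorem is invoked on $(\Om,\mu)$ with $\mu=g\mu_0$ finite, which holds because elements of $\Mm(\Om,\mu_0)$ are finite measures; second, you rely on the paper's reading of Definition \ref{def:sempi1} in which ``eventually bounded in each $L^p$'' is equated with ``converges to $1$ in each $L^p$-seminorm'' --- under the weaker ``eventually bounded'' reading your forward direction still goes through by one more application of the same Vitali argument, so nothing is lost, but the dependence on which formulation is taken as the definition should be acknowledged. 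What your route buys is independence from \cite{PS1995}: the proposition becomes a corollary of the Orlicz-space material in Section \ref{section:Orlicz} rather than an imported fact.
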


By virtue of this proposition, we shall refer to the topology on $T_\mu\Mm_+(\Om, \mu_0)$ obtained above as the {\em topology of $e$-convergence} or the {\em $e$-topology}. Our description allows us to describe in a different way the Banach manifold structure on $\Mm(\Om, \mu_0)$ defined in \cite{PS1995}.

\begin{theorem} \label{thm:PS-components}
Let $K \subset \Mm_+(\Om, \mu_0)$ be an equivalence class w.r.t. $\sim$, and let 
$T := T_{[\mu]} \Mm_+(\Om, \mu_0)$ for $\mu \in K$ be the common exponential tangent space, equipped with the $e$-topology. Then for all $\mu \in K$,
\[
A_\mu := \log_\mu(K) \subset T
\]
is open convex. In particular, the identification $\log_\mu: A_\mu \to K$ allows us to canonically identify $K$ with a open convex subset of the affine space associated to $T$.
\end{theorem}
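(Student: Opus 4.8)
The plan is to give $A_\mu = \log_\mu(K)$ an explicit description in terms of integrability of exponentials and then verify openness and convexity directly from it. Writing $f := \log_\mu(\nu) = \log(d\nu/d\mu)$ for $\nu \in K$ and setting $\Lambda_f(s) := \int_\Om e^{sf}\,d\mu \in (0,\infty]$, I would first unravel $\nu \sim \mu$ using the preorder (\ref{def:partial-order}): the condition $\nu \preceq \mu$ says $d\nu/d\mu = e^f \in L^p(\mu)$ for some $p>1$, i.e. $\Lambda_f(p) < \infty$ for some $p>1$, while $\mu \preceq \nu$ says $d\mu/d\nu = e^{-f} \in L^r(\nu)$ for some $r>1$, i.e. $\Lambda_f(1-r) = \int_\Om e^{(1-r)f}\,d\mu < \infty$ for some $1-r<0$. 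Since $\Lambda_f$ is log-convex by H\"older's inequality, its finiteness set is an interval; as it always contains $0$ (where $\Lambda_f(0)=\mu(\Om)$) and $1$ (where $\Lambda_f(1)=\nu(\Om)$), the relation $\nu\sim\mu$ is equivalent to $\Lambda_f$ being finite on an open interval containing $[0,1]$. Thus $A_\mu = \{ f \in T : \Lambda_f(s) < \infty \text{ for all } s \in [-\delta,1+\delta],\ \text{some } \delta > 0\}$. I note in passing that $A_\mu$ is in general strictly larger than the set $B_\mu^0(\Om)$ of inner points, since membership in $A_\mu$ only forces small negative multiples of $f$ to be integrable, so I will not try to identify the two. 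That $A_\mu\subseteq T=T_{[\mu]}=T_\mu$ holds because $\Lambda_f(s)<\infty$ for some $s<0<1<s'$ makes $tf\in B_\mu(\Om)$ for all small $t>0$ (Definition \ref{def:tangent}).

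For convexity, given $f_1,f_2 \in A_\mu$ and $\lambda\in[0,1]$, I would apply H\"older's inequality with conjugate exponents $1/\lambda$ and $1/(1-\lambda)$ to get
\[
\Lambda_{\lambda f_1 + (1-\lambda)f_2}(s) \le \Lambda_{f_1}(s)^\lambda\,\Lambda_{f_2}(s)^{1-\lambda},
\]
which is finite for every $s$ in the intersection of the two finiteness neighborhoods of $[0,1]$; this intersection is again an open interval containing $[0,1]$, so $\lambda f_1 + (1-\lambda)f_2 \in A_\mu$.

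The main obstacle is openness, and this is where the Orlicz-space structure enters. Fix $f_0 \in A_\mu$, so $\Lambda_{f_0}$ is finite on $[-\delta_0,1+\delta_0]$ for some $\delta_0>0$. I would choose $\alpha>1$ close to $1$ (say $\alpha = 1+\delta_0/2$) and a small $\delta_1>0$ so that $\alpha\,[-\delta_1,1+\delta_1]\subseteq[-\delta_0,1+\delta_0]$, and set $\beta:=\alpha/(\alpha-1)$ and $C:=\beta(1+\delta_1)$. For a perturbation $h$ and $s\in[-\delta_1,1+\delta_1]$, H\"older's inequality gives
\[
\Lambda_{f_0+h}(s) = \int_\Om e^{sf_0}e^{sh}\,d\mu \le \Big(\int_\Om e^{\alpha s f_0}\,d\mu\Big)^{1/\alpha}\Big(\int_\Om e^{\beta s h}\,d\mu\Big)^{1/\beta},
\]
where the first factor is finite because $\alpha s \in [-\delta_0,1+\delta_0]$, and the second is bounded by $\int_\Om e^{C|h|}\,d\mu$ since $|\beta s|\le C$. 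The key fact, recorded after Definition \ref{def:tangent}, is that the unit ball of the Orlicz norm on $T_\mu = L^{\cosh t - 1}(\mu)$ is contained in $B_\mu(\Om)$; hence if $\|h\|_{\cosh t-1,\mu} < 1/C$ then $\|Ch\|_{\cosh t-1,\mu}<1$, so $Ch\in B_\mu(\Om)$ and $\int_\Om e^{C|h|}\,d\mu<\infty$. Therefore every $g=f_0+h$ in the Orlicz ball of radius $1/C$ about $f_0$ lies in $A_\mu$, proving openness; since by Proposition \ref{prop:econv} the $e$-topology on $T=T_{[\mu]}$ is precisely this Banach (Orlicz) topology, $A_\mu$ is open in $T$.

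For the concluding identification, I would note that $\log_{\mu_0}$ is a bijection of $\Mm_+(\Om,\mu_0)$ onto $\hat B_{\mu_0}(\Om)$ and hence restricts to a bijection $\log_\mu: K \to A_\mu$; since changing the base point replaces $\log_\mu$ by $\log_\mu - u$ with $u=\log_\mu(\mu')$, the charts for different $\mu\in K$ differ by a translation in $T$, so the identifications are affine, exhibiting $K$ as an open convex subset of the affine space modeled on $T$. The delicate point throughout is the bookkeeping of H\"older exponents in the openness step: keeping $\alpha s$ inside the finiteness window $[-\delta_0,1+\delta_0]$ of $f_0$ while controlling the conjugate exponent $\beta$ so that the remaining factor is absorbed by a small Orlicz ball inside $B_\mu(\Om)$.
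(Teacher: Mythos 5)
Your proof is correct, and it reorganizes the argument around direct H\"older estimates where the paper instead exploits a translation property. The paper's proof first observes that $A_{\mu'} = A_\mu - g$ for $\mu' = e^g\mu$ (changing the base point translates the chart), combines this with the star-shapedness $\lambda f \in A_\mu$ for $\lambda \in [0,1]$ to get convexity, and then reduces openness to showing that $0$ is an interior point of $A_{\mu'}$ for every $\mu' \in K$, which follows from $B^0_{\mu'}(\Om) \subset A_{\mu'}$ and the fact that $B^0_{\mu'}(\Om)$ contains the open unit Orlicz ball. You bypass the translation step entirely: your three-line/log-convexity characterization of $A_\mu$ via the finiteness interval of $\Lambda_f$ gives convexity in one application of H\"older, and your openness argument works directly at an arbitrary $f_0$ by splitting $e^{s(f_0+h)}$ with conjugate exponents $\alpha,\beta$ so that the $f_0$-factor stays inside its finiteness window and the $h$-factor is absorbed by a small Orlicz ball inside $B_\mu(\Om)$ --- the same key fact the paper uses, just applied at $f_0$ with respect to $\mu$ rather than at the origin with respect to $\mu'$. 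What the paper's route buys is brevity and no exponent bookkeeping (the equivalence of the Orlicz norms for $\mu' \sim \mu$ does the work that your constants $\alpha,\beta,C$ do); what your route buys is a cleaner quantitative picture (an explicit radius $1/C$ for the ball around $f_0$) and independence from the norm-equivalence statement. Your passing remark that $A_\mu$ is in general strictly larger than $B^0_\mu(\Om)$ is accurate and correctly explains why a direct argument at each point (or the paper's translation) is needed rather than identifying the two sets.
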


\begin{remark} This theorem shows that the equivalence classes w.r.t. $\sim$ are the connected components of the $e$-topology on $\Mm(\Om, \mu_0)$, and since each such component is canonically identified as a subset of an affine space whose underlying vector space is equipped with a family of equivalent Banach norms, it follows that $\Mm(\Om, \mu_0)$ is a Banach manifold. This is the affine Banach manifold structure on $\Mm(\Om, \mu_0)$ described in \cite{PS1995}, therefore we refer to it as the {\em Pistone-Sempi structure}.
\end{remark}

\begin{proof} {\em (Theorem \ref{thm:PS-components})} If $f \in A_\mu$, then, by definition, $(1+s) f, -s f \in \hat B_\mu(\Om)$ for some $s > 0$. In particular, $s f \in B_\mu(\Om)$, so that $f \in T$ and hence, $A_\mu \subset T$. Moreover, if $f \in A_\mu$ then $\lambda f \in A_\mu$ for $\lambda \in [0,1]$.

Next, if $g \in A_\mu$, then $\mu' := e^g \mu \in K$. Therefore, $f \in A_{\mu'}$ if and only if $K \ni e^f \mu' = e^{f+g} \mu$ if and only if $f+g \in A_\mu$, so that $A_{\mu'} = g + A_\mu$ for a fixed $g \in T$. From this, the convexity of $A$ follows.

Therefore, in order to show that $A_\mu \subset T$ is open, it suffices to show that $0 \in A_{\mu'}$ is an inner point for all $\mu' \in K$. For this, observe that for $f \in B_{\mu'}^0(\Om)$ we have $(1+s) f \in B_{\mu'}(\Om)$ and hence $e^{\pm (1+s) f} \in L^1(\Om, \mu')$, so that $e^f \in L^{1+s}(\Om, \mu')$ and $ e^{-f} \in L^{1+s}(\Om, \mu') \subset L^s(\Om, \mu')$, whence $e^f \mu' \sim \mu' \sim \mu$, so that $e^f \mu' \in K$ and hence, $f \in A_{\mu'}$. Thus, $0 \in B_{\mu'}^0(\Om) \subset A_{\mu'}$, and since $B_{\mu'}^0(\Om)$ contains the unit ball of the Orlicz norm, the claim follows.
\end{proof}

In the terminology which we developed, we can formulate the significance of the Pistone-Sempi structure on $\Mm_+(\Om, \mu_0)$ as follows.

\begin{proposition}\label{pro:pise}  The quadruple $(\Mm_+(\Om, \mu), \Om, \mu, i_{can})$ is a $k$-integrable  statistical model for all $k \ge 1$.
\end{proposition}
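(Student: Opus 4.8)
The plan is to verify directly the two clauses of Definition \ref{def:gen} for the quadruple $(\Mm_+(\Om, \mu), \Om, \mu, i_{can})$, the underlying structural requirements being already in place. Indeed, Theorem \ref{thm:PS-components} together with the Remark following it equips $\Mm_+(\Om, \mu)$ with the structure of a smooth Banach manifold whose connected components are the similarity classes $K$, each identified via $\log_\mu$ with an open convex subset of the affine space modelled on the exponential tangent space $T_{[\mu]}\Mm_+(\Om, \mu)=L^{\cosh t-1}(\mu)$. Moreover, $i_{can}$ is continuous from the $e$-topology into the $L^1$-topology, since by Definition \ref{def:sempi1} (see also Proposition \ref{prop:econv}) $e$-convergence is by construction stronger than $L^1$-convergence. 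Thus $M$ is a $C^1$ Banach manifold and $p=i_{can}$ maps continuously into $\Mm_+(\Om,\mu)$, and it remains to treat the two conditions of Definition \ref{def:gen}.

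First I would fix a point $x$ with $p(x)=\nu\in K$ and compute the density potential and its logarithmic derivative. In the chart centred at $\nu$ a tangent vector $V\in T_\nu\Mm_+(\Om,\mu)$ is an element $u\in T:=T_{[\mu]}\Mm_+(\Om,\mu)$, and the curve $t\mapsto e^{tu}\nu$ stays in $K$ for small $t$ because $tu\in B_\nu(\Om)$ once $|t|$ is small. Writing $\nu=e^{f}\mu$ in the global chart $\log_\mu$ gives $\ln\bar p(x,\om)=f(\om)$, which for $\mu$-almost every $\om$ is finite and affine in the chart coordinate; hence $x\mapsto\ln\bar p(x,\om)$ is, pointwise in $\om$, smooth with
\[
\p_V\ln\bar p(x,\om)=u(\om).
\]
This establishes clause (1): for a.e. $\om$ the function is continuously G\^ateaux differentiable, its differential $V\mapsto u(\om)$ is linear on each $T_xM$, and the push-forward $V\mapsto\p_V\ln\bar p=u$ is simply the continuous inclusion $T\hookrightarrow L^k$ recorded below.

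The substance lies in clause (2). For the membership $\p_V\ln\bar p=u\in L^k(\Om,\nu)$ for every $k\ge1$, I would invoke the continuous inclusions (\ref{eq:inclusion}), namely $T_\mu\Mm_+(\Om,\mu)\subset\bigcap_{p\ge1}L^p(\Om,\mu)$; since $\nu\sim\mu$ we have $T=T_\nu\Mm_+(\Om,\mu)=L^{\cosh t-1}(\nu)$ with an equivalent norm, so the same inclusion gives $u\in L^k(\Om,\nu)$. For the continuity of $x\mapsto\|\p_V\ln\bar p\|_{L^k(\Om,p(x))}$ along a continuous vector field $V$, I would write, for $x$ near $x_0$ with $\nu_x=e^{f_x}\mu$,
\[
\|u(x)\|_{L^k(\nu_x)}^k=\int_\Om|u(x)|^k\,d\nu_x,
\]
and split the difference with $\|u(x_0)\|_{L^k(\nu_{x_0})}^k$ into a term controlled by $u(x)\to u(x_0)$ in $T$ (hence in every $L^{kp}(\nu_x)$) and a term $\int_\Om|u(x_0)|^k\,(d\nu_x/d\nu_{x_0}-1)\,d\nu_{x_0}$ controlled by H\"older's inequality, using that $d\nu_x/d\nu_{x_0}\to1$ in every $L^p(\nu_{x_0})$, $p>1$, by $e$-convergence (Proposition \ref{prop:econv}) and that $|u(x_0)|^k\in L^{p'}(\nu_{x_0})$. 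The uniform comparison of the Orlicz-to-$L^{kp}$ inclusion constants as $\nu_x$ ranges over a neighbourhood is supplied by Propositions \ref{prop:compare-Orlicz} and \ref{prop:orlitz-changemeasure} and Lemma \ref{lem:young-stretch}.

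The main obstacle I anticipate is precisely this last continuity statement: one must control the $L^k(\nu_x)$-norm of the varying element $u(x)$ simultaneously as both the integrand $u(x)$ and the measure $\nu_x$ move, and the only tool that makes the measure-variation harmless is the strong control the $e$-topology exerts on the Radon--Nikodym derivatives in every $L^p$, $p>1$. Making the two splitting terms tend to zero uniformly --- in particular checking that the Orlicz norms appearing in the H\"older estimates stay equivalent on a whole $e$-neighbourhood rather than merely at the base point --- is the step that requires care; the pointwise G\^ateaux differentiability and the $L^k$-membership are, by contrast, formal consequences of the affine chart and of (\ref{eq:inclusion}).
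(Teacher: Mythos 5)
Your proposal is correct and follows essentially the same route as the paper: identify $\ln\bar p$ with the affine chart coordinate supplied by Theorem \ref{thm:PS-components}, so that $\p_V\ln\bar p(x,\cdot)=u(\cdot)$, and invoke the continuous inclusions (\ref{eq:inclusion}) for the $L^k$-membership. Your treatment of the continuity of $x\mapsto\|\p_V\ln\bar p\|_{L^k(\Om,p(x))}$ is in fact more careful than the paper's, which dismisses it with the remark that the integrand is $x$-independent in the chart $A_\mu$ and leaves implicit the variation of the measure $p(x)$ — precisely the point your H\"older/$e$-convergence splitting addresses.
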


\begin{proof} Note that  for
$x \in \Mm_+(\Om, \mu)$ we have   $\ln \bar p(x, \om) = \ln  x(\om)$.  Using this and the definition of the Pistone-Sempi manifold, we conclude  that
the first condition in Definition
\ref{def:gen} holds for the Pistone-Sempi  manifold.  The second condition in Definition \ref{def:gen} also holds for the Pistone-Sempi manifolds, since  by Theorem \ref{thm:PS-components} for $f \in T_x \Mm_+ (\Om, \mu)$ we have $\p_f \ln x( \om)  = f(\om)$ and by (\ref{eq:inclusion}) the inclusion $T_\mu \Mm_+(\Om, \mu) \to L^k (\Om, \mu)$ is continuous for all $k\ge 1$.
The $e$-continuity of $|\p_{f_1}(\om)\cdots \p_{f_k}(\om)|$ in $x$ holds  obviously, since   in  coordinates  $A_\mu$ that expression  does  not depend  on $x$.
\end{proof}

The following example shows that the notion of a $k$-integrable parametrized measure model is more general than the corresponding
notion within the theory of Pistone and Sempi.

\begin{example} \label{example-notPS}
Let $\Om := (0,1)$, and consider the $1$-parameter family of finite measures
\[
 p(x) \; := \; \bar{p}(x, t) \, dt \; :=  \; \exp \left(  - \frac{x^2}{t^{\frac{1}{k}}}  \right) d t \; \in \; \Mm_+((0,1), dt) , \qquad x \in {\mathbb R}.
\]
This family defines a $(k-1)$-integrable parametrized measure model: Consider the map 
\[
   \ln \bar{p}(\cdot, t): \; x \mapsto \; - \frac{x^2}{t^{\frac{1}{k}}} . 
\]
It is continuously differentiable for all $t \in (0,1)$ and therefore satisfies condition (1) of Definition \ref{def:gen}. 
Now we come to condition (2): With a continuous vector field $V: {\Bbb R} \to {\Bbb R}$, we have
\[
         \partial_V \ln \bar{p}(x, t) \; = \; V(x) \, \frac{\partial}{\partial x} \, \ln \bar{p}(x,t) \; = \;  
         V(x) \, \frac{\partial}{\partial x}  \left(- \frac{x^2}{t^{\frac{1}{k}}} \right) \; = \; 
         - V(x) \,  \frac{2 \, x}{t^{\frac{1}{k}}} .
\]
We now show that the function $t \mapsto \partial_V \ln \bar{p}(x, t)$ belongs to $L^{j}((0,1), p(x))$ for all $j \leq k - 1$: 
\begin{eqnarray*}
  I^{(j)}(x) & := & {\| \partial_V \ln \bar{p}(x, t) \|}_{L^j((0,1), p(x))}^j \\
   & = & \int_0^1 \left( \frac{2 \, | x \, V(x) |}{t^{\frac{1}{k}}} \right)^j \exp \left( - \frac{x^2}{t^{\frac{1}{k}}} \right) \, dt  \\
   & \leq &  \frac{\big( 2 \, | x \, V(x) | \big)^j}{e} \int_0^1 \frac{1}{t^{\frac{j}{k}}} \, dt  \\
   & = &  \frac{\big( 2 \, | x \, V(x) | \big)^j}{e} \, \frac{k}{k - j} \\
   & < & \infty .
\end{eqnarray*}
Finally, since $\dim M = 1$,  it suffices to show that the function $x \mapsto I^{(j)}(x)$ is continuous. In order to verify the 
continuity in a point $x_0 \in {\Bbb R}$ it is sufficient to consider the restriction of $I^{(j)}$ to the closed interval 
$[x_0 - \varepsilon, x_0 + \varepsilon]$ with some positive number $\varepsilon$. On this interval, the corresponding integrand is 
upper bounded by a function that only depends on $t$ and is integrable: 
\[
     \left( \frac{2 \, | x \, V(x) |}{t^{\frac{1}{k}}} \right)^j \exp \left( - \frac{x^2}{t^{\frac{1}{k}}} \right) \; \leq \;  
     \frac{c}{t^{\frac{j}{k}}}, \qquad c \geq 0.
\]
Therefore, by the continuity lemma for integrals, $I^{(j)}$ is continuous, which completes the proof that our family is $(k-1)$-integrable
parametrized measure model. However, it does not define a model in the sense of Pistone and Sempi. In order to see this we show that 
for all $x \neq 0$, $p(x)$ and $p(0)$ are not similar: Obviously,  
\[
    dt \; = \; \exp \left( { \frac{x^2}{t^{\frac{1}{k}}} } \right) dp(x) .
\]
The similarity of $dp(x)$ and $dt$ would imply that $\frac{dt}{dp (x)}$ is in $L^{1+s}((0,1), dp(x))$ for some $s > 0$ (see \ref{def:partial-order}
and \ref{df:equivalence}). However, for all $s > 0$, we have
\begin{eqnarray*}
 \int_0^1 \left(\exp \left(  \frac{x^2}{t^{\frac{1}{k}}}  \right)  \right)^{1+s} dp(x) 
    & = & \int_0^1 \exp \left( \frac{s \, x^2}{t^{\frac{1}{k}}} \right) dt \\
    & \geq &  \int_0^1 \frac{1}{k!} {\left( \frac{s \, x^2}{t^{\frac{1}{k}}} \right)}^k   \, dt \\
    & = & \infty.
\end{eqnarray*}   
Thus, $p(x)$ and $dt$ are in different $e$-connected components of $\Mm_+((0,1), dt)$ and, therefore, the map $p$ cannot be continuous 
with respect to the $e$-topology. Hence, the parametrized measure model 
cannot be considered as a submanifold of $\Mm_+((0,1), dt)$ in the sense of Pistone and Sempi.
\end{example}

We end this section with the following result which illustrates how the ordering $\preceq$ provides a stratification of $\hat B_{\mu_0}(\Om)$.

\begin{proposition} \label{prop:stratification}
Let $\mu_0', \mu_1' \in \Mm_+ (\Om, \mu_0)$ with $f_i := \log_{\mu_0}(\mu_i') \in \hat B_{\mu_0}(\Om)$, and let $\mu_\lambda' := \exp(f_0 + \lambda (f_1 - f_0)) \mu_0$ for $\lambda \in [0,1]$ be the segment joining $\mu_0'$ and $\mu_1'$. Then the following hold.
\begin{enumerate}
\item The measures $\mu'_\lambda$ are similar for $\lambda \in (0,1)$.
\item $\mu_\lambda' \preceq \mu_0'$ and $\mu_\lambda' \preceq \mu_1'$ for $\lambda \in (0,1)$.
\item $T_{\mu_\lambda'}\Mm_+(\Om, \mu_0) = T_{\mu_0'} \Mm_+(\Om, \mu_0) + T_{\mu_1'} \Mm_+(\Om, \mu_0)$ for $\lambda \in (0,1)$.
\end{enumerate}
\end{proposition}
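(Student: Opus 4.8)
The plan is to reduce all three assertions to a single integrability observation: by H\"older's inequality (this is precisely the convexity of $\hat B_{\mu_0}(\Om)$ recorded above), whenever $a,b\ge 0$ with $a+b=1$ one has $e^{af_0+bf_1}=(e^{f_0})^a(e^{f_1})^b\in L^1(\Om,\mu_0)$, since $\int_\Om (e^{f_0})^a(e^{f_1})^b\,d\mu_0\le(\int_\Om e^{f_0}d\mu_0)^a(\int_\Om e^{f_1}d\mu_0)^b<\infty$. Writing $g_\lambda:=(1-\lambda)f_0+\lambda f_1$, so that $\mu_\lambda'=e^{g_\lambda}\mu_0$ and $\log_{\mu_0}(\mu_\lambda')=g_\lambda$, each Radon--Nikodym ratio occurring below, raised to a power $p$ and integrated against the appropriate measure, collapses to $\int_\Om e^{af_0+bf_1}d\mu_0$ with $a,b$ depending affinely on $p$ and $a+b=1$. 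The only point to check is that $p>1$ can be chosen keeping $a,b\ge 0$, and this is exactly where the strictness $\lambda\in(0,1)$ enters.

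For (2) I would compute $d\mu_\lambda'/d\mu_0'=e^{\lambda(f_1-f_0)}$ and integrate its $p$-th power against $\mu_0'=e^{f_0}\mu_0$; the exponent becomes $p\lambda\,f_1+(1-p\lambda)f_0$, whose coefficients are nonnegative and sum to $1$ as soon as $1<p\le 1/\lambda$ (possible since $\lambda<1$), giving $\mu_\lambda'\preceq\mu_0'$. The symmetric computation with $d\mu_\lambda'/d\mu_1'=e^{(1-\lambda)(f_0-f_1)}$ and $1<p\le 1/(1-\lambda)$ (possible since $\lambda>0$) yields $\mu_\lambda'\preceq\mu_1'$. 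For (1) the same device applied to $d\mu_{\lambda'}'/d\mu_\lambda'=e^{(\lambda'-\lambda)(f_1-f_0)}$ against $\mu_\lambda'$ gives a coefficient of $f_0$ equal to $1-\lambda-p(\lambda'-\lambda)$, nonnegative for $1<p\le(1-\lambda)/(\lambda'-\lambda)$ when $\lambda<\lambda'$ (using $\lambda'<1$); interchanging $\lambda,\lambda'$ and using $\lambda>0$ gives the reverse inequality, so $\mu_\lambda'\sim\mu_{\lambda'}'$ for all $\lambda,\lambda'\in(0,1)$.

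For (3), the inclusion $T_{\mu_0'}\Mm_+(\Om,\mu_0)+T_{\mu_1'}\Mm_+(\Om,\mu_0)\subseteq T_{\mu_\lambda'}\Mm_+(\Om,\mu_0)$ is immediate from (2) together with the continuous embedding $T_\mu\subset T_{\mu'}$ valid whenever $\mu'\preceq\mu$. The substantive direction is the reverse inclusion, where I expect the real work to lie. I would prove it by localizing on the sign of $f_0-f_1$: set $\Om_0:=\{f_0\ge f_1\}$ and $\Om_1:=\{f_0<f_1\}$, both measurable, and for $f\in T_{\mu_\lambda'}\Mm_+(\Om,\mu_0)=L^{\cosh t-1}(\mu_\lambda')$ --- equivalently $\int_\Om e^{s|f|}d\mu_\lambda'<\infty$ for some $s>0$, which is just the definition recorded after Definition \ref{def:tangent} --- I would split $f=f\chi_{\Om_1}+f\chi_{\Om_0}$.

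The key estimate is that on $\Om_1$ one has $d\mu_0'/d\mu_\lambda'=e^{\lambda(f_0-f_1)}\le 1$, so that
\[
\int_\Om e^{s|f\chi_{\Om_1}|}\,d\mu_0'=\int_{\Om_1}e^{s|f|}\,d\mu_0'+\mu_0'(\Om_0)\le\int_\Om e^{s|f|}\,d\mu_\lambda'+\mu_0'(\Om_0)<\infty,
\]
using that $\mu_0'$ is a finite measure; hence $f\chi_{\Om_1}\in T_{\mu_0'}\Mm_+(\Om,\mu_0)$. Symmetrically, on $\Om_0$ one has $d\mu_1'/d\mu_\lambda'=e^{(1-\lambda)(f_1-f_0)}\le 1$, yielding $f\chi_{\Om_0}\in T_{\mu_1'}\Mm_+(\Om,\mu_0)$, and therefore $f\in T_{\mu_0'}\Mm_+(\Om,\mu_0)+T_{\mu_1'}\Mm_+(\Om,\mu_0)$. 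The only mild technicalities to watch are the conventions for $f_i$ taking values $\pm\infty$ (harmless, since $e^{f_i}\in L^1$ forces $f_i<\infty$ $\mu_0$-almost everywhere, so $g_\lambda$ is well defined a.e.) and measurability of $\Om_0,\Om_1$. The main obstacle is precisely spotting the sign-based splitting that sends the bad region for each endpoint measure to where it is dominated by $\mu_\lambda'$; once that is found, every remaining step is a one-line density comparison.
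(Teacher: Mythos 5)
Your proof is correct and takes essentially the same route as the paper's: parts (1) and (2) are obtained by the identical device of choosing $p>1$ so that the exponent of the $p$-th power of the density remains a convex combination of $f_0$ and $f_1$ (hence integrable by H\"older), and part (3) uses the very same sign-based splitting of $\Om$ into $\{f_0<f_1\}$ and $\{f_0\ge f_1\}$, sending each piece into the endpoint tangent space whose measure dominates $\mu'_\lambda$ there (the paper merely reduces first to $\lambda=1/2$ via part (1), a cosmetic difference).
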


\begin{proof}
Let $\delta := f_1 - f_0$ and $\phi := \exp(\delta)$. Then for all $\lambda_1, \lambda_2 \in [0,1]$, we have
\begin{equation} \label{eq:define-phi}
\mu'_{\lambda_1} = \phi^{\lambda_1-\lambda_2} \mu'_{\lambda_2}.
\end{equation}
For $\lambda_1 \in (0,1)$ and $\lambda_2 \in [0,1]$, we pick $p > 1$ such that $\lambda_2 + p (\lambda_1 - \lambda_2) \in (0,1)$. Then by (\ref{eq:define-phi}) we have
\[
\phi^{p (\lambda_1 - \lambda_2)} \mu'_{\lambda_2} = \mu'_{\lambda_2 + p (\lambda_1 - \lambda_2)}  \in \Mm_+(\Om, \mu_0),
\]
so that $\phi^{p (\lambda_1 - \lambda_2)} \in L^1(\Om, \mu'_{\lambda_2})$ or $\phi^{\lambda_1 - \lambda_2} \in L^p(\Om, \mu_{\lambda_2})$ for small $p-1 > 0$. Therefore, $\mu'_{\lambda_1} \preceq \mu'_{\lambda_2}$ for all $\lambda_1 \in (0,1)$ and $\lambda_2 \in [0,1]$, which implies the first and second statement.

This implies that $T_{\mu_i'} \Mm_+ (\Om, \mu_0) \subset T_{\mu'_\lambda} \Mm_+ (\Om, \mu_0) = T_{\mu'_{1/2}} \Mm_+ (\Om, \mu_0)$ for $i = 0, 1$ and all $\lambda \in (0,1)$ which shows one inclusion in the third statement. 

In order to complete the proof, observe that
\[
T_{\mu'_{1/2}} \Mm_+ (\Om, \mu_0) = T_{\mu'_{1/2}} \Mm_+ (\Om_+, \mu_0) \oplus T_{\mu'_{1/2}} \Mm_+ (\Om_-, \mu_0),
\]
where $\Om_+ := \{ \om \in \Om \mid \delta(\om) > 0\}$ and 
$\Om_- := \{ \om \in \Om \mid \delta(\om) \leq 0\}$. If $g \in T_{\mu'_{1/2}} \Mm_+ (\Om_+, \mu_0)$, then for some $t \neq 0$
\begin{eqnarray*}
\int_\Om \exp(|t g|) d\mu'_0 & \leq & \int_{\Om_+} \exp(|tg| + \frac12 \delta) d\mu'_0 + \int_{\Om_-} d\mu'_0\\
& = & \int_{\Om_+} \exp(|tg|) d\mu'_{1/2} + \int_{\Om_-} d\mu'_0 < \infty,
\end{eqnarray*}
so that $g \in T_{\mu'_0}(\Om, \mu_0)$ and hence, $T_{\mu'_{1/2}} \Mm_+ (\Om_+, \mu_0) \subset T_{\mu'_0}(\Om, \mu_0)$. Analogously, one shows that $T_{\mu'_{1/2}} \Mm_+ (\Om_-, \mu_0) \subset T_{\mu'_0}(\Om, \mu_1)$ which completes the proof.
\end{proof}

\section*{Acknowledgements} 
H.V.L. would  like to thank Shun-ichi Amari for many fruitful discussions, and Giovanni Pistone for providing the articles \cite{CP2007, GP1998}.
We thank Holger Bernigau for his critical helpful comments on an early version of this paper. We are  grateful to   the anonymous referees for their  helpful  remarks and suggestions.
This work has been supported by the Max-Planck Institute for Mathematics in the Sciences in Leipzig, the BSI at RIKEN  in Tokyo, the ASSMS, GCU in Lahore-Pakistan, the VNU for Sciences in Hanoi, the Mathematical  Institute of the Academy of Sciences of the 
Czech Republic in Prague, and the Santa Fe Institute. 
We are grateful for excellent working conditions and financial support of these institutions during extended visits of some of us.

\end{document}